\newtheorem{thm}{Theorem}[section]
\newtheorem{cor}[thm]{Corollary}
\newtheorem{lem}[thm]{Lemma}
\newtheorem{prop}[thm]{Proposition}
\theoremstyle{definition}
\theoremstyle{remark}
\newtheorem{rem}[thm]{Remark}
\numberwithin{equation}{section}
\newcommand{\supp}{\text{supp }}
\newcommand{\dist}{\text{dist }}
\newcommand{\inp}[2]{\langle #1,#2 \rangle}
\newcommand{\schr}{e^{it\Delta}}
\newcommand {\mix}[2]{L^{#1}_tL^{#2}_x}
\newcommand{\mixednorm}{\|_{\mix{q/2}{r/2}}}
\newcommand{\amixednorm}{\|_{\mix{\tilde q'}{\tilde r'}}}
\newcommand{\amixed}{\mix{\tilde q'}{\tilde r'}}
\newcommand{\amixedI}{\mix{\tilde q'}{\tilde r'}(J_T\times \bbr^n)}
\newcommand {\Del} {\Delta}
\newcommand{\hilnorm}[1]{\|_{H^{#1}}}
\newcommand{\hhilnorm}[1]{\|_{{\overset{\centerdot}{H}}{}^{#1}}}
\newcommand{\La}{\Lambda}
\newcommand{\al}{\alpha}
\newcommand{\xsp}{\mathbf X^{s,b}}
\newcommand{\xspnorm}{\|_{\mathbf X^{s,b}}}
\newcommand{\xsnorm}[2]{\|_{\mathbf X^{#1,#2}}}
\newcommand{\na}{\nabla}
\newcommand{\la}{\lambda}
\newcommand{\bbr}{\mathbb R}
\newcommand{\tfg}{|\nabla|^{2-n} (\schr f\schr g)\schr h}
\newcommand{\calh}{\mathcal H (f, g, h)}
\newcommand{\calhd}{\mathcal H (f, g, \nabla h)}
\newcommand{\caldh}{\mathcal H (\nabla f, g, h)}
\newcommand{\tfgdij}[3]{\mathcal H (f_{N_{#1}}, g_{N_{#2}}, \nabla h_{N_{#3}})}
\newcommand{\tddfgij}[3]{\mathcal H (\nabla f_{N_{#1}},  g_{N_{#2}},  h_{N_{#3})}}
\newcommand{\tddfgijk}{\tddfgij{1}{2}{3}}
\newcommand{\tfgdijk}{\tfgdij{1}{2}{3}}
\newcommand{\normfg}{\|f\|_{L^2}\|g\|_{L^2}\|h\|_{L^2}}
\newcommand{\hnormfg}{\|f\|_{\overset{\centerdot} {H}{}^{s_1}}\|g\|_{\overset{\centerdot} {H}{}^{s_2}}\|h\|_{\overset{\centerdot} {H}{}^{s_3}}}
\newcommand{\xsnormuv}{\|u\xsnorm{s_1}{b}\|v\xsnorm{s_2}{b}\|w\xsnorm{s_3}{b}}
\newcommand \pair  {\tx{\frac {4(n-1)}{n}}{\frac {2(n-1)}{n-2}}}
\newcommand{\sfa}[1]{N_{#1}^{s_1}\|f_{N_{#1}}\|_{L^2}}
\newcommand{\sga}[1]{N_{#1}^{s_2}\|g_{N_{#1}}\|_{L^2}}
\newcommand{\sha}[1]{N_{#1}^{s_3}\|h_{N_{#1}}\|_{L^2}}
\newcommand{\ssum}{\sum s_i=1}
\newcommand {\gain}{\left(\frac{N_4}{N_2} \right)^{\frac 12}}
\newcommand{\sss}{s_1,s_2,s_3}
\newcommand {\mi}[1]{L^{#1}_{\xi_1,\eta}}
\newcommand \tx[2] {L^{#1}_tL^{#2}_x}
\newcommand {\ep} {\epsilon}
\newcommand {\lnabla} {\langle \nabla \rangle}
\newcommand{\fd}[1]{|\nabla|^{#1}}
\newcommand{\pa}{\partial}
\begin{document}
\title[Mixed norm estimates and their applications]
{Mixed norm estimates of Schr\"{o}dinger waves and their applications}
\author{Myeongju Chae}
\address{Department of Applied Mathematics, Hankyong National University, Ansong 456-749, Republic of Korea}
\email{mchae@hknu.ac.kr}
\author{Yonggeun Cho}
\address{Department of Mathematics, and Institute of Pure and Applied Mathematics, Chonbuk National University, Jeonju 561-756, Republic of Korea}
\email{changocho@chonbuk.ac.kr}
\author{Sanghyuk Lee}
\address{Department of Mathematical Sciences, Seoul National University, Seoul 151-747, Republic of Korea}
\email{shklee@snu.ac.kr}
\subjclass[2000]{42B25, 35Q40, 35Q55}
\keywords{mixed norm estimates, interactive Schr\"{o}dinger waves, mass critical nonlinearity, smoothing property, global well-posedness}
\thanks{Y. Cho is supported in part by the Korea Research Foundation Grant KRF-2008-313-C00065. S. Lee is supported in part by the grant
KOSEF-2007-8-1220.}
\begin{abstract}
In this paper we establish mixed norm estimates of interactive
Schr\"{o}dinger waves and apply them to study  smoothing
properties and global well-posedness of the nonlinear
Schr\"{o}dinger equations with mass critical nonlinearity.
\end{abstract}
\maketitle

\section{Introduction}
The Sctrichartz estimate shows the dispersive nature of Schr\"{o}dinger waves, which can be formulated via mixed norm (\cite{s, kt}). More precisely, for admissible $(q,r)$
$$\|\schr f\|_{\mix{q}{r}}\lesssim \|f\|_{L^2}.$$
Here a pair $(q, r)$ is said to be  {\it admissible} if it satisfies
$\frac2q=n(\frac 12 - \frac1r)$, $q,r\ge 2$ with exception
$(q,r)=(2,\infty)$ when $n=2$ and  $e^{it\Delta}$ denotes the  free
propagator of Schr\"{o}dinger equation.

Due to scaling, the frequency localization via Littewood-Paley decomposition does not give any improvement to the aforementioned Strichartz estimates.
 However, it was observed by Bourgain \cite{bo} that by considering  low and high frequency  interactions  of two Schr\"{o}dinger waves, namely bilinear control of $\schr f\schr g$,  it is possible to obtain  a refinement of Strichartz estimate in $L_{t,\,x}^2(\bbr \times \bbr^2)$ (note that $(4,4)$ is an admissible pair when $n=2$). In \cite{kv} Keranni and Vargas recently extended Bourgian's reults to higher dimensions by showing that a sharp $L_{t,\, x}^{(n+2)/n}(\bbr \times \bbr^n), n \ge 1$ estimate holds for the interactive Schr\"{o}dinger waves.

 Our first result is that such refinements of Strichartz estimates are also  valid in the mixed norm setting for $n \ge 3$. Actually it gives stronger interactive estimate which is stated as follows:
\begin{thm}\label{hilbert} Let $n\ge 2$.  Let $(q,r)$ satisfy that $2/q=n(1/2 - 1/r)$,  $2 < r < 4$, and $q>2$. Then for $|s| <1-2/r$,
\begin{align}\label{bilinear}
\|\schr f\schr  g\|_{\mix {q/2} {r/2}} \lesssim \|f\hhilnorm{s}\|g\hhilnorm{-s}.
\end{align}
\end{thm}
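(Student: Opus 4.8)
The plan is to dispense with the derivatives by a Littlewood--Paley decomposition, reduce \eqref{bilinear} to a bilinear estimate for transversally separated frequency pieces, and then sum. Write $f=\sum_{N_1}f_{N_1}$, $g=\sum_{N_2}g_{N_2}$ with $\widehat{f_{N_1}},\widehat{g_{N_2}}$ supported in $\{|\xi|\sim N_1\},\{|\xi|\sim N_2\}$, so that $\schr f\schr g=\sum_{N_1,N_2}\schr f_{N_1}\schr g_{N_2}$. Since $q/2,r/2\ge1$, the norm $\|\cdot\|_{\mixed}$ obeys the triangle inequality; using this together with $\|f\hhilnorm{s}\sim(\sum_{N_1}N_1^{2s}\|f_{N_1}\|_{L^2}^2)^{1/2}$ it suffices to control $\sum_{N_1,N_2}\|\schr f_{N_1}\schr g_{N_2}\mixednorm$. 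I would split this into the diagonal part $N_1\sim N_2$ and, after invoking the $(f,g,s)\mapsto(g,f,-s)$ symmetry of the statement, the single off-diagonal regime $N_1\ge4N_2$. On the diagonal there is nothing to gain and nothing to lose: by H\"older in $x$ and the Strichartz estimate for the admissible pair $(q,r)$,
\begin{equation*}
\|\schr f_{N}\schr g_{N'}\mixednorm\le\|\schr f_N\|_{\mix qr}\,\|\schr g_{N'}\|_{\mix qr}\lesssim\|f_N\|_{L^2}\|g_{N'}\|_{L^2},
\end{equation*}
and as $N\sim N'$ one has $N^{s}(N')^{-s}\sim1$, so Cauchy--Schwarz over the finitely many admissible ratios $N/N'$ bounds the diagonal sum by $\|f\hhilnorm{s}\|g\hhilnorm{-s}$.

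The crux is the off-diagonal regime, where I would establish the transversal bilinear estimate
\begin{equation}\label{plan-bil}
\|\schr f_{N_1}\schr g_{N_2}\mixednorm\lesssim\Big(\frac{N_2}{N_1}\Big)^{1-\frac2r}\|f_{N_1}\|_{L^2}\|g_{N_2}\|_{L^2},\qquad N_1\ge4N_2 .
\end{equation}
Granting \eqref{plan-bil}, put $\|f_{N_1}\|_{L^2}=N_1^{-s}a_{N_1}$, $\|g_{N_2}\|_{L^2}=N_2^{s}b_{N_2}$, so $\|a\|_{\ell^2}=\|f\hhilnorm{s}$, $\|b\|_{\ell^2}=\|g\hhilnorm{-s}$, and set $N_2=2^{-j}N_1$ with $j\ge2$. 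Then the off-diagonal sum is dominated by
\begin{equation*}
\sum_{j\ge2}2^{-j(1-\frac2r+s)}\sum_{N_1}a_{N_1}b_{2^{-j}N_1}\ \lesssim\ \Big(\sum_{j\ge2}2^{-j(1-\frac2r+s)}\Big)\|a\|_{\ell^2}\|b\|_{\ell^2},
\end{equation*}
a convergent geometric series exactly when $s>-(1-\tfrac2r)$; the mirror regime $N_2\ge4N_1$ forces $s<1-\tfrac2r$, and the two together give precisely $|s|<1-\tfrac2r$. (One checks that \eqref{plan-bil} is consistent with parabolic scaling exactly because $(q,r)$ is admissible, which is a reassuring bookkeeping point.)

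It remains to prove \eqref{plan-bil}. The basic ingredient is the classical transversal bilinear $L^2_{t,x}$ estimate
\begin{equation*}
\|\schr f_{N_1}\schr g_{N_2}\|_{L^2_tL^2_x(\bbr\times\bbr^n)}\lesssim\Big(\frac{N_2^{\,n-1}}{N_1}\Big)^{1/2}\|f_{N_1}\|_{L^2}\|g_{N_2}\|_{L^2},
\end{equation*}
which follows from space-time Plancherel and the resonance identity: on the surface $\tau=|\xi_1|^2+|\xi_2|^2$, $\xi=\xi_1+\xi_2$ the coarea Jacobian is $\sim|\xi_1-\xi_2|\sim N_1$, while the fibre is confined by $\supp\widehat{g_{N_2}}$ to measure $\lesssim N_2^{\,n-1}$. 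When $n=2$ this is already \eqref{plan-bil} at $r=4$ (there $(4,4)$ is admissible); for $2<r<4$ one simply interpolates it against the trivial endpoint $\|\schr f_{N_1}\schr g_{N_2}\|_{\mix\infty 1}\le\|f_{N_1}\|_{L^2}\|g_{N_2}\|_{L^2}$, and since admissibility reads $\tfrac2q=1-\tfrac2r$ the interpolation parameter makes the gain come out to exactly $1-\tfrac2r$. For $n\ge3$ the factor $N_2^{(n-1)/2}$ is too large to be compensated by any bilinear estimate whose time exponent is $\ge2$, so one must lower the time exponent below $2$ while keeping a gain. Here I would call on a bilinear adjoint restriction estimate for the paraboloid in a \emph{mixed} norm $L^\sigma_tL^\rho_x$ with $\sigma<2$ for transversal caps --- obtained, after a parabolic rescaling turning $\{|\xi|\sim N_1\},\{|\xi|\sim N_2\}$ into unit-separated transversal caps, from Tao's bilinear restriction theorem together with its mixed-norm refinements --- keeping track of the powers of $N_1,N_2$ produced by the rescaling and by reassembling the $\sim(N_1/N_2)^n$ caps that cover the high-frequency annulus (orthogonality when the relevant exponent is $2$, a square-function estimate otherwise), and then interpolating against the bilinear $L^2_{t,x}$ estimate with the spatial exponent held at $r/2$.

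The step I expect to be the real obstacle is precisely this last one: for $n\ge3$, producing a bilinear estimate for transversal Schr\"odinger waves with a time exponent below $2$ and a frequency gain strong enough to reach the full range $|s|<1-\tfrac2r$ --- for $r$ near $4$ the $L^p_{t,x}$ bilinear restriction theorem alone falls short of the needed exponent $1-\tfrac2r$, and a genuine mixed-norm bilinear estimate (presumably the technical heart advertised by the title) is required. Everything else --- the Littlewood--Paley splitting, the diagonal term, and the geometric summation --- is routine once \eqref{plan-bil} is available.
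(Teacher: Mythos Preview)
Your plan is correct and matches the paper's approach: the paper also Littlewood--Paley decomposes, reduces Theorem~\ref{hilbert} to the dyadic bilinear estimate (your \eqref{plan-bil}, their Corollary~\ref{interaction}), and sums by Cauchy--Schwarz over the ratio $N_1/N_2$; the dyadic estimate is in turn deduced from a mixed-norm bilinear restriction estimate for transversal caps (Proposition~\ref{mixed-bi}), proved for $n\ge4$ by Tao's wave-packet induction-on-scales to reach an $L^1_tL^2_x$ endpoint, exactly as you anticipate. The only discrepancy is that the paper's Corollary~\ref{interaction} carries an $\epsilon$-loss, giving gain $(N_2/N_1)^{1-2/r-\epsilon}$ rather than your sharp $(N_2/N_1)^{1-2/r}$, but since $|s|<1-2/r$ is strict this is harmlessly absorbed in the geometric summation.
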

\noindent The estimate trivially holds for $(r,s)=(2,0)$ by Plancherel's theorem and when $n=2$ it was actually obtained in \cite{kv} including $(q,r)=(4,4)$. This estimate obviously has a scaling structure in $L^2$ space so that the estimate is invariant along the admissible $(q,r)$.
The above estimate makes  it possible to move a certain amount of derivative on  one  to  the other function. So it is useful when one studies the smoothing property of nonlinear Schr\"odingers of power type. The range on $s$ is sharp, since \eqref{bilinear} fails for $|s| >1-2/r$ (see the discussion below Proposition \ref{mixed-bi}).
The  estimate \eqref{bilinear} is strongly connected to the bilinear restriction estimates for the paraboloid (see \cite{kv, lv, t5}). In fact, for the proof of Theorem \ref{hilbert}  we establish estimates for bilinear interactions between waves at different frequency. It relies on the argument used to prove the bilinear restriction estimate for the paraboloid \cite{lv, t5}, which makes use of wave packet decomposition and induction on scaling (see Proposition \ref{mixed-bi} and Corollary \ref{interaction} below).

\smallskip

Aside from the power type, one of the most typical nonlinearity is that of Hartree in the study of nonlinear Schr\"odiner equations (see \eqref{hartree} below). To handle the Hartree type nonlinearity, we consider the  trilinear operator $\mathcal H$ which is given by
\[\mathcal H (f,g,h) \equiv \tfg.\]
Here $|\nabla |^{2-n}$ is the pseudo-differential operator with symbol $|\xi|^{2-n}$ which is the convolution with $c_n|x|^{-2}$. To make the operator have sense, we assume $n\ge 3$ throughout the paper when we use  the notation $|\nabla |^{2-n}$.  As it is turned out (see Theorem \ref{hart} and Theorem \ref{T}),
 the trilinear estimate enables us to control  the interaction of waves arising in Hartree type nonlinearity more effectively. It is stated as follows:
\begin{thm}\label{trid} Let $n\ge 3$ and let  $(\tilde q, \tilde r)$ be admissible. Suppose that $\sss$ are positive numbers satisfying $\ssum$. Then, if $s_3>\frac12$
\begin{align}\label{trid-1}\|\mathcal H (f,g,\nabla h)\|_{\mix{\tilde q'}{\tilde r'}}\lesssim \hnormfg.\end{align}
If $s_1>\frac12$, then
\begin{align}\label{trid-2}\|\mathcal H (\nabla f,g,h)\|_{\mix{\tilde q'}{\tilde r'}}\lesssim \hnormfg.\end{align}
\end{thm}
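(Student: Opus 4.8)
The plan is to reduce the trilinear Hartree estimate to the bilinear interaction estimates that underlie Theorem \ref{hilbert}, exploiting the smoothing gained from the factor $|\nabla|^{2-n}$. By symmetry of the two arguments inside the convolution, the estimates \eqref{trid-1} and \eqref{trid-2} are essentially interchangeable, so I would concentrate on \eqref{trid-1} where the derivative falls on the outer factor $h$. First I would perform a Littlewood--Paley decomposition of all three functions, writing $f=\sum_{N_1} f_{N_1}$, $g = \sum_{N_2} g_{N_2}$, $h = \sum_{N_3} h_{N_3}$, and split the frequency sum according to the relative sizes of $N_1, N_2, N_3$. The key point is that $|\nabla|^{2-n}$ acts on the product $\schr f_{N_1}\schr g_{N_2}$, whose output frequency $N_4$ is comparable to $\max(N_1,N_2)$ when the two are separated, but can be much smaller in the diagonal block $N_1\sim N_2$; the symbol then contributes a factor $N_4^{2-n}$ which must be balanced against the other frequency weights.

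The main step is to estimate each dyadic piece $\|\mathcal H(f_{N_1},g_{N_2},\nabla h_{N_3})\|_{\mixed}$ where I would abbreviate the exponents suitably. I would apply H\"older in the $\mixed$ norm (at the level of the dual admissible pair $(\tilde q',\tilde r')$) to separate the bilinear factor $|\nabla|^{2-n}(\schr f_{N_1}\schr g_{N_2})$ from $\nabla \schr h_{N_3}$: the first is placed in a mixed-norm space handled by the bilinear estimate \eqref{bilinear} (or rather its frequency-localized refinement, Proposition \ref{mixed-bi} / Corollary \ref{interaction}), and the second in a standard Strichartz space, contributing $N_3 \|h_{N_3}\|_{L^2}$. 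When $N_1$ and $N_2$ are comparable one instead keeps the full trilinear structure and uses the output frequency localization at scale $N_4 \lesssim N_1\sim N_2$, summing over $N_4$ against the gain $(N_4/N_2)^{1/2}$ type factor coming from the bilinear interaction estimate. In every case the derivative count works out because $|\nabla|^{2-n}$ supplies $2-n$ powers of the internal frequency, the external $\nabla$ supplies one power of $N_3$, and the homogeneity condition $s_1+s_2+s_3=1$ on the $\hhil{}$ norms exactly matches the total number of derivatives (namely $2-n + 1 + (n-2) = 1$) available across the three factors.

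After bounding each dyadic block by $C\, 2^{-\varepsilon(\cdot)}\,\hnormfg$ with an exponential gain in the separation $|\log(N_i/N_j)|$ between the relevant frequencies, I would sum over the dyadic parameters. The summation over $N_3$ uses $s_3 > 1/2$ exactly as in the bilinear case: this is where the hypothesis $s_3 > 1/2$ (resp. $s_1 > 1/2$ for \eqref{trid-2}) enters, since one needs $\ell^2$-summability of $N_3^{-s_3}(N_3^{s_3}\|h_{N_3}\|_{L^2})$ times a logarithmically-growing interaction factor, which requires a little room above the critical Sobolev exponent $1/2$ attached to the outer wave. The remaining sums over $N_1, N_2$ (and $N_4$) converge by the geometric gains, using Cauchy--Schwarz to recombine into the product of $\hhil{s_i}$ norms.

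The hard part will be the bookkeeping in the region where two of the frequencies are comparable and much larger than the third: there the bilinear estimate \eqref{bilinear} must be used with the output frequency $N_4$ genuinely summed, the weight $N_4^{2-n}$ from $|\nabla|^{2-n}$ competes with the frequency weights, and one must verify that the admissibility/Strichartz exponents line up so that the leftover H\"older factors can indeed be absorbed by Strichartz for $\nabla\schr h_{N_3}$. Controlling that interaction, rather than the derivative count (which is dictated by scaling) or the outer summation (handled by $s_3>1/2$), is the crux; the positivity of all $s_i$ is what guarantees there is a genuine gain to spend in that regime.
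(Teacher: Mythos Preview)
Your overall plan---Littlewood--Paley decompose all three inputs, sort by the ordering of $N_1,N_2,N_3$, use a bilinear interaction estimate on one pair and Strichartz on the remaining factor, then sum---is exactly the paper's route, and the paper packages the dyadic step cleanly as Proposition~\ref{har-key-prop}: $\|\mathcal H(f_{N_1},g_{N_2},h_{N_3})\|_{\amixed}\lesssim (\min N_i/\max N_i)^{1/2}\normfg$. Two points in your write-up need correcting, however.

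First, the claim that \eqref{trid-1} and \eqref{trid-2} are ``essentially interchangeable by symmetry of the two arguments inside the convolution'' is wrong. The $f\leftrightarrow g$ symmetry gives $\mathcal H(f,g,h)=\mathcal H(g,f,h)$, but it does \emph{not} exchange $\mathcal H(f,g,\nabla h)$ with $\mathcal H(\nabla f,g,h)$: the derivative sits on the outer factor in one and on an inner factor in the other. The paper treats \eqref{trid-2} by running through all six orderings of $N_1,N_2,N_3$ separately; the method is the same but it is not a formal consequence of \eqref{trid-1}.

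Second, your description of the diagonal block $N_1\sim N_2$ is where the real content lies, and the mechanism is different from what you sketch. When $N_1\sim N_2$ the bilinear estimate applied to $\schr f_{N_1}\schr g_{N_2}$ gives \emph{no} gain (comparable frequencies). The paper instead localizes the output of $|\nabla|^{2-n}$ at dyadic scale $N_4$, then chops $f$ and $g$ into cubes $Q,Q'$ of side $\sim N_4$ (so that only nearly antipodal pairs survive), removes the multiplier $\psi(|\nabla|/N_4)$ by a Fourier series expansion, and finally applies the $L^2_{t,x}$ bilinear estimate \eqref{l22} to a pair involving $h$: since the cube-localized $f_Q$ (or $g_{Q'}$) is now both small in frequency support and separated from $\supp\widehat h$, one gets the factor $(\min(N_4,N_3))^{(n-1)/2}$. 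Summing in $N_4$ produces the $(\min/\max)^{1/2}$ gain. So in that regime the interaction is between an inner wave and the outer wave $h$, not between the two inner waves. Your last paragraph correctly identifies this as the crux, but the bilinear estimate you want to invoke there is \eqref{l22} paired against $h$, not Corollary~\ref{interaction} on $(f,g)$.
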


It should be noticed that the estimates \eqref{trid-1}, \eqref{trid-2} are invariant under scaling for all admissible  $(\tilde q, \tilde r)$ (cf. Lemma \ref{stricha}).
For the proof we first show frequency localized estimates (Proposition \ref{har-key-prop} below)  which also rely on the bilinear interaction estimates and the scaling structure of $\mathcal H$.  Compared with \eqref{bilinear}, a stronger interaction estimate is possible thanks to  operator the $|\nabla|^{2-n}$ which  gives additional decay in frequency space.

\smallskip

Now we  consider
applications of Theorem \ref{hilbert} and \ref{trid} to nonlinear Schr\"{o}dinger equations. We are concerned with the Cauchy problem of $L^2$  critical nonlinear Schr\"{o}dinger equation in $\mathbb{R}^n, n \ge 3$, of which nonlinear part is given by the nonlinear potential $V(u)$ of Hartree  or power type:
\begin{eqnarray}\label{hartree}
\begin{cases}
iu_{t} + \Delta u =   V(u)u,\;\; (t,x) \in \mathbb [0,T] \times \mathbb{R}^{n},\; T>0,\\
u(0,x) = u_0(x) \in H^s(\mathbb{R}^n).
\end{cases}
\end{eqnarray}
That is to say, $V (u) = \kappa|x|^{-2}\ast |u|^2$ or $V(u) = \kappa |u|^\frac{4}{n}$ with $\kappa = \pm 1$.
Here $u:[0,T] \times\mathbb{R}^{n} \to \mathbb{C}$ is a complex valued function.
If $u$ is a solution to \eqref{hartree}, the scaled function $\lambda^{\frac n2}u(\lambda^2t, \lambda x),$ $\la > 0$ is also a solution. Hence \eqref{hartree} is invariant under the scaling in $L^2$ space (i.e. $L^2$ critical).  By the Duhamel's principle the problem \eqref{hartree} is equivalent to solving the integral equation for $t \in [0,T]$;
\begin{align}\label{inthartree}
u(t) = e^{it\Delta}u_0 - i\int_0^t e^{it(t-t')\Delta} (V(u)u)(t')\,dt'.
\end{align}
It is well known that the problem \eqref{hartree} is locally wellposed for every $s\ge 0$ (see \cite{ca, tb}). The lifespan of solution $u$ depends on the $H^s$ norm if $u_0 \in H^s, s > 0$, and the profile of $u_0$ if $u_0 \in L^2$, respectively.
The solution $u \in C([0,T]; H^s)$ to \eqref{hartree} satisfies conservation laws, namely, mass and energy;  for any $t \in [0,T]$, if $s \ge 0$
\[
\|u(t)\|_{L^2}^2 = \|u_0\|_{L^2}^2,\;\;\]
and if $s\ge 1$
\[E(u(t)) \equiv \frac12\|\na u(t)\|_{L^2}^2 + \omega \int V(u(t))|u(t)|^2\,dx= E(u_0)\textcolor{red}{,}
\]
where $\omega = 1/4$ if $V (u) = \kappa|x|^{-2}\ast |u|^2$ and $\omega = \frac{n}{4+2n}$ if $V(u) = \kappa |u|^\frac{4}{n}$. If the data is sufficiently smooth ($s \ge 1$), various results were established by using the classical energy argument. However, it does not work any longer when $0 \le s < 1$  and there has been a lot of works devoted to extending those results to lower regularity initial data (for instance see \cite{bo, ckstt04, cr}).

\smallskip

We firstly apply Theorem \ref{hilbert}, \ref{trid} to study the smoothing properties  of solutions to the Cauchy problem \eqref{hartree}.
We consider a strong global (in $x$-space) smoothing effect such that the Duhamel's part \begin{equation}\label{duhamel}
D(t) \equiv u(t) - \schr u_0 \in C([0,T]; H^1)
\end{equation}  for all $T$ within the lifespan when the initial data $u_0$ is in $H^s$, $0\le s<1$.
The smoothing actually stems from the interaction of Schr\"{o}dinger waves arising in the nonlinear term. It  was first observed by Bourgain \cite{bo} for $V(u) = \kappa|u|^2, n = 2, s > 2/3$ and later extended by Keranni and Vargas \cite{kv} for $V(u) = \kappa|u|^\frac4n, n \ge 1, s > s_n$, where $s_1 = 3/4, s_n = n/(n+2)$ for $2 \le n \le 4$, $s_n = (n^2+2n-8)/n(n+2)$. To utilize the interaction, they established refined bilinear Strichartz estimates in $L_{t,\,x}^\frac{n+2}{n}$ as mentioned above.
In the following, we get better smoothing effects that \eqref{duhamel} holds for a rougher $u_0$, using the Theorems \ref{hilbert} and \ref{trid} together with the duality arguments based on the Bourgain space (\cite{bo,kv}).


\begin{thm}\label{hart} Let $n\ge 3$.
$(1)$ If $u_0\in H^{s}(\bbr ^n)$ and $1/2 < s < 1$, then there is a maximal existence time $T^*>0$ such that a unique solution $u$ to \eqref{hartree} with $V(u) = \kappa |x|^{-2}*|u|^2$ exists in $C([0,T^*); H^s)$ and $D$ satisfies \eqref{duhamel} for all $T<T^*$.

\noindent$(2)$ Let $s_n = \frac12$ for
$n = 3, 4$ and $s_n=1-\frac{8}{n^2}$ for
$n \ge 5$. If\; $u_0\in H^{s}(\bbr ^s)$, $s_n < s < 1$, then there is a maximal existence time $T^*$ such that a unique solution $u$ in $C([0,T^*); H^s)$ to  \eqref{hartree} with $V(u) = \kappa |u|^\frac4n$ and $D$ satisfies \eqref{duhamel} for all $T<T^*$.
\end{thm}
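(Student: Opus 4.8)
The plan is to use the known local well-posedness of \eqref{hartree} in $H^s$ and to deduce the claimed smoothing as an a priori estimate on the Duhamel term. Since the statement is local in time, for $T<T^*$ we already have $u\in C([0,T];H^s)$, and by iterating the standard $\xs{s}{b}$ estimates on finitely many short subintervals we get $u\in\xs{s}{b}(\TR)$ for some $b>\tfrac12$ with $b-\tfrac12$ as small as we please. It then suffices to establish a bound of the form $\|D\|_{L^\infty_t H^1(\TR)}\lesssim\|u\|_{\xs{s}{b}(\TR)}^{N}$ for a suitable power $N$, from which the continuity $D\in C([0,T];H^1)$ follows by the usual approximation argument. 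Writing $D(t)=-i\int_0^t\schrs(V(u)u)(t')\,dt'$ and applying the inhomogeneous (energy) estimate in $\xs{s}{b}$ spaces,
\[
\|D\|_{L^\infty_t H^1}\ \lesssim\ \|V(u)u\|_{\xs{1}{b-1}}\ \lesssim\ \|V(u)u\|_{\xs{0}{b-1}}+\bigl\| |\nabla|(V(u)u)\bigr\|_{\xs{0}{b-1}},
\]
and since the first term on the right is controlled by the standard local theory, everything reduces to estimating $\bigl\| |\nabla|(V(u)u)\bigr\|_{\xs{0}{b-1}}$.

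For the Hartree nonlinearity $V(u)=\kappa|x|^{-2}*|u|^2=\kappa c_n|\nabla|^{2-n}|u|^2$, the Leibniz rule expresses $|\nabla|(V(u)u)$, up to conjugations, constants, and the obvious opposite-sign variants forced by $|u|^2=u\bar u$, as a sum of terms of the two shapes $|\nabla|^{2-n}(\bar u\,u)\,\nabla u$ and $|\nabla|^{2-n}(\nabla u\,\bar u)\,u$, i.e.\ exactly $\mathcal H(\bar u,u,\nabla u)$ and $\mathcal H(\nabla u,\bar u,u)$ once the free evolutions are made visible. I would dualize against $\xs{0}{1-b}$ and write both $u$ and the test function $w$ as superpositions of modulated free waves,
\[
u(t)=\int e^{i\mu t}\schr u_\mu\,d\mu,\qquad \int \langle\mu\rangle^{2b}\|u_\mu\|_{H^s}^2\,d\mu\lesssim\|u\|_{\xs{s}{b}}^2,
\]
and similarly for $w$ with weight $\langle\nu\rangle^{2(1-b)}$, thereby reducing the form to an integral in $(\mu_1,\mu_2,\mu_3,\nu)$ of inner products $\inp{|\nabla|\mathcal H(u_{\mu_1},\bar u_{\mu_2},\nabla u_{\mu_3})}{\schr w_\nu}$ together with the analogue carrying the derivative in the first slot. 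For each such piece, H\"older in $(t,x)$ against $\schr w_\nu$ followed by Theorem~\ref{trid} — taking $s_3=\tfrac12+\epsilon$, resp.\ $s_1=\tfrac12+\epsilon$, and the other two exponents small positive numbers with $\ssum$ — gives $\lesssim\|u_{\mu_1}\|_{\hhil{s_1}}\|u_{\mu_2}\|_{\hhil{s_2}}\|u_{\mu_3}\|_{\hhil{s_3}}\|w_\nu\|_{L^2}$, which is $\lesssim\prod_i\|u_{\mu_i}\|_{H^s}\|w_\nu\|_{L^2}$ as soon as $0\le s_i\le s$ for every $i$; since one $s_i$ equals $\tfrac12+\epsilon$ this forces exactly $s>\tfrac12$. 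As $b>\tfrac12$, the integral over the modulation parameters converges and sums to $\lesssim\|u\|_{\xs{s}{b}}^3$, proving part (1).

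For the power nonlinearity $V(u)u=\kappa|u|^{4/n}u$ the scheme is identical, except that Theorem~\ref{trid} is unavailable (there is no $|\nabla|^{2-n}$) and is replaced by the bilinear estimate \eqref{bilinear} applied to a distinguished pair of free-wave factors, the remaining factors being absorbed by H\"older's inequality and ordinary Strichartz estimates (available since $\xs{s}{b}\hookrightarrow\tx{q}{r}$ for admissible $(q,r)$). Concretely, after a fractional Leibniz rule one writes $|\nabla|(|u|^{4/n}u)$ as a sum of products of an ``$\schr$'' pair $\schr f\,\schr g$, handled by \eqref{bilinear} after redistributing $|\nabla|^{\sigma}$ onto $f$ and $|\nabla|^{-\sigma}$ onto $g$ with $\sigma$ close to $1-2/r$, times a leftover $|u|^{4/n-1}$-type factor estimated in the complementary mixed norm by Strichartz and Sobolev embedding. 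Optimizing the H\"older/Strichartz exponents in $n$ is what produces the threshold: the bookkeeping closes at $s_n=\tfrac12$ for $n=3,4$, whereas for $n\ge5$ the power $|u|^{4/n}$ is sublinear, the fractional chain rule must be applied with more care, and the threshold degrades to $s_n=1-8/n^2$.

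The main obstacle is the Bourgain-space dualization and the accompanying modulation bookkeeping: one must descend from the genuine solution $u$ to the free Schr\"odinger waves that Theorems~\ref{hilbert} and~\ref{trid} can see, and then verify that the $\epsilon$ of room in the ranges $|s|<1-2/r$ and $s_i>\tfrac12$ is enough to absorb the weights $\langle\mu\rangle^{b}$ with $b>\tfrac12$ and still converge after integrating over the modulations. A secondary difficulty, specific to part (2), is the non-integer/sublinear power $|u|^{4/n}$, whose treatment via the fractional chain rule and the subsequent exponent optimization is exactly what is responsible for the $n$-dependent thresholds $s_n$.
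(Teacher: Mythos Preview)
Your overall architecture matches the paper's --- reduce to a nonlinear estimate of the form $\|V(u)u\|_{\xs{1}{b-1}}\lesssim\|u\|_{\xs{s}{b}}^3$ for $s>\tfrac12$, $b>\tfrac12$, and then invoke the inhomogeneous $\xs{s}{b}$ estimate --- but the duality step as written does not close. After expanding the test function $w\in\xs{0}{1-b}$ as $\int e^{i\nu t}\schr w_\nu\,d\nu$, your per-piece bound is $\|w_\nu\|_{L^2}$, and you then need
\[
\int \|w_\nu\|_{L^2}\,d\nu\ \lesssim\ \|w\|_{\xs{0}{1-b}}.
\]
Cauchy--Schwarz would require $\int\langle\nu\rangle^{-2(1-b)}\,d\nu<\infty$, i.e.\ $1-b>\tfrac12$; but you have fixed $b>\tfrac12$, so $1-b<\tfrac12$ and this integral diverges. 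Equivalently: Theorem~\ref{trid} places $\mathcal H(\cdot,\cdot,\nabla\cdot)$ in $\amixed$ for an \emph{admissible} $(\tilde q,\tilde r)$, and the H\"older-dual norm $\tx{\tilde q}{\tilde r}$ is controlled only by $\xs{0}{\frac12+}$ (first part of Lemma~\ref{xsb-str}), not by the $\xs{0}{\frac12-}$ that your duality demands.

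The paper repairs exactly this point (Proposition~\ref{trixxx}): it interpolates \eqref{trid-1}--\eqref{trid-2} with a crude $\tx{\infty}{\infty}$ bound, obtaining the trilinear estimate in $\tx{\tilde q'+\epsilon_1}{\tilde r'+\epsilon_2}$ at the price of $\sum s_i$ strictly larger than $1$ (harmless, since $s>\tfrac12$ leaves room). The H\"older-conjugate exponents for the test function then satisfy $2/\tilde q+n/\tilde r>n/2$, so the second part of Lemma~\ref{xsb-str} gives $\|\psi\|_{\tx{\tilde q}{\tilde r}}\lesssim\|\psi\|_{\xs{0}{\frac12-\epsilon}}$, and the pairing closes. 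Your sketch for part~(2) inherits the same defect and is in addition too schematic to recover the thresholds: the paper (Proposition~\ref{nonlinear}) uses pointwise differentiation $|\nabla(|u|^{4/n}u)|\lesssim|u|^{4/n}|\nabla u|$ rather than a fractional chain rule, dyadically decomposes the factor carrying $\nabla$, applies the $\xs{s}{b}$ transference of Theorem~\ref{hilbert} (Corollary~\ref{x-estimate}) to one bilinear piece, and places the remaining factors in deliberately non-admissible mixed norms on either side of the Strichartz line --- this exponent bookkeeping, with $(q,r)$ pushed toward $(\tfrac83,4)$ for $n=3$ and toward $(2,\tfrac{2n}{n-2})$ for $n\ge4$, is what actually produces $s_3=s_4=\tfrac12$ and $s_n=1-8/n^2$ for $n\ge5$.
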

\noindent In part (2) we do not have any improvement on 2-d result
which was obtained in \cite{kv} ($s_2=\frac12$). The above result
shows that  the Hartree type interaction is more effective than the
power type when $n \ge 5$, which may  be interpreted as weaker (of lower power)
nonlinearity causes a lower interaction between the waves.
The smoothing effect can be used to show an $H^1$ mechanism for the blowup phenomenon of the Cauchy problem \eqref{hartree} (see Remark 1.3 of \cite{kv}). In \cite{kv},
it was shown that if $T^*$ is finite, then
\[\|\nabla D(t)\|_{L^2} \gtrsim (T^*-t)^{-1/2}\]
for power type NLS provided that \eqref{duhamel} holds for all
$T<T^*$. Hence, part (2) of Theorem \ref{hart} extends the possible
range of $s$. Similarly, using part (1) of Theorem \ref{hart}  and
the  argument in \cite{kv} together with well-known scaling
argument, one can also get the same blowup rate of $D(t)$ for the
finite time blowup solution of  Hartree type NLS as long as $u_0\in
H^{s}(\bbr ^n)$ and $1/2 < s < 1$.

\smallskip

We now consider  the global well-posedness of defocusing $L^2$
critical Hartree equation, \eqref{hartree} with $\kappa = +1$, for
rough initial data in $H^s, 0  < s < 1$. Recently Chae and Kwon
\cite{ck} considered the same problem \eqref{hartree} and they got global
well-posedness for $u_0\in H^s$, ${2(n-2)}/({3n-4}) < s  < 1 $.
Their result is based on the  so called $I$-method. (For details and
recent development of $I$-method, we refer readers to \cite{cgt,
ckstt02, ckstt04, ckstt08, cr, crsw, dpst, fg}.) We here make
further improvement. By exploiting the interaction
of Schr\"odinger waves systematically (Proposition
\ref{har-key-prop}), we obtain better decay estimates for almost
energy conservation and interaction Morawetz inequality (see
Proposition \ref{ACL}, \ref{ebound})  which are the major estimates
for $I$-method. As a consequence we get the following global
well-posedness theorem.
\begin{thm}\label{T} Let $n\ge 3$ and $V(u) = |x|^{-2}*|u|^2$.
 Then the initial value problem of \eqref{hartree} is
globally well-posed for data $u_0\in H^s(\bbr^n)$ when $ \frac{4(n-2)}{7n-8} < s <1$.
\end{thm}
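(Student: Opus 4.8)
\emph{Overview and setup.} The plan is to run the $I$-method together with an interaction Morawetz estimate, the new ingredient being that the frequency-localized interaction bounds underlying Theorems \ref{hilbert} and \ref{trid} — packaged in Proposition \ref{har-key-prop} — feed the almost conservation law with far better decay in the mollification parameter than plain Strichartz estimates would. Concretely, I would fix $N\gg 1$ and take $I=I_N$ to be the smooth Fourier multiplier equal to the identity on $\{|\xi|\le N\}$ and to $(|\xi|/N)^{s-1}$ on $\{|\xi|\ge 2N\}$, so that $I$ is bounded on $H^{\sigma}$ for $\sigma\le s$ and $\||\na|If\|_{L^2}\lesssim N^{1-s}\|f\|_{\hhil{s}}$. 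Since \eqref{hartree} with $V(u)=|x|^{-2}\ast|u|^2$ is $L^2$-critical, for a given target time $T$ I first replace $u_0$ by the rescaled datum $u_0^{\la}=\la^{n/2}u_0(\la\,\cdot)$, which preserves the mass; because $\||\na|Iu_0^{\la}\|_{L^2}\lesssim N^{1-s}\la^{s}\|u_0\|_{\hhil{s}}$, choosing $\la\sim N^{-(1-s)/s}$ (the constant depending only on $\ep_0$ and $\|u_0\|_{H^s}$) forces the mollified energy $E(Iu^{\la}(0))\le\ep_0$; here defocusing ($\kappa=+1$) is used to keep $E\ge 0$. It then suffices to show the rescaled solution exists on $[0,\la^{-2}T]$ with $E(Iu^{\la}(t))\le 2\ep_0$ there, since undoing the scaling returns $u\in C([0,T];H^s)$ with quantitative control of $\|u(t)\|_{H^s}$, and $T$ is arbitrary.

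\emph{The two conservation-law estimates.} The engine is the almost conservation law, Proposition \ref{ACL}: on any subinterval $J\subset[0,\la^{-2}T]$ on which the relevant Strichartz-type norm $\|Iu^{\la}\|_{S^1(J)}$ is small, one has $\sup_{t\in J}|E(Iu^{\la}(t))-E(Iu^{\la}(\inf J))|\lesssim N^{-\gamma}\|Iu^{\la}\|_{S^1(J)}^{p}$ for some $\gamma=\gamma(n,s)$ and some $p$. I would obtain this by differentiating $E(Iu^{\la})$ in time and using the equation: the surviving term is driven by the commutator $\big(|x|^{-2}\ast|Iu^{\la}|^2\big)Iu^{\la}-I\big((|x|^{-2}\ast|u^{\la}|^2)u^{\la}\big)$, which is supported in frequencies $\gtrsim N$; after a Littlewood--Paley decomposition each of the resulting (quartic and sextic) pieces is controlled by the trilinear interaction estimates of Proposition \ref{har-key-prop} together with the $\na$-variants of $\mathcal H$ from Theorem \ref{trid}. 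Two features produce the gain: the mean-value oscillation of the multiplier of $I$, and — decisively — the extra frequency decay carried by $|\na|^{2-n}$, which is exactly why $\gamma$ is larger here than in an argument resting on Strichartz estimates alone. Independently, I would invoke the mollified interaction Morawetz inequality, Proposition \ref{ebound}, which yields an \emph{a priori} global spacetime bound for $Iu^{\la}$ — schematically $\|Iu^{\la}\|_{L^4_{t,x}([0,\la^{-2}T]\times\bbr^n)}^4\lesssim C(\|u_0\|_{L^2})\,E(Iu^{\la})^{a}$ with $a>0$ (for $n\ge 4$ the $L^4_{t,x}$ norm is replaced by a $|\na|^{-(n-3)/4}$-weighted $L^2_{t,x}$ norm), with constant independent of $N$, $T$ and $\la$; the $N$-dependent errors coming from $I$ not commuting with the Morawetz action are again absorbed by the interaction estimates.

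\emph{Assembly and the role of $s$.} I would then run a continuity/bootstrap argument on $[0,\la^{-2}T]$ with hypothesis $E(Iu^{\la}(t))\le 2\ep_0$. Under this hypothesis the Morawetz bound controls $\|Iu^{\la}\|_{L^4_{t,x}([0,\la^{-2}T]\times\bbr^n)}$ by a constant depending only on $\ep_0$ and $\|u_0\|_{L^2}$, so $[0,\la^{-2}T]$ partitions into a number $L$ of subintervals — \emph{independent of $N$ and $T$} — on each of which $\|Iu^{\la}\|_{L^4_{t,x}}$, hence by the local theory $\|Iu^{\la}\|_{S^1}$, is $O(\sqrt{\ep_0})$. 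Iterating Proposition \ref{ACL} over these $L$ subintervals gives $E(Iu^{\la}(\la^{-2}T))\le\ep_0+L C N^{-\gamma}\ep_0^{p/2}$, which stays $\le 2\ep_0$ as soon as $N$ exceeds a fixed threshold, \emph{provided} $\gamma(n,s)>0$; undoing the scaling then finishes the proof. Thus the whole argument reduces to making $\gamma(n,s)$ positive, and carrying out the frequency bookkeeping for the commutator term with the sharp interaction bounds of Proposition \ref{har-key-prop} gives $\gamma(n,s)>0$ precisely for $s>\frac{4(n-2)}{7n-8}$, the asserted range, improving the threshold $\frac{2(n-2)}{3n-4}$ of \cite{ck}.

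\emph{Where the difficulty sits.} The only place the regularity restriction is felt, and the technical core of the argument, is proving Proposition \ref{ACL} with $\gamma(n,s)$ positive throughout the claimed range. One must split the energy-increment term into all frequency configurations — according to which of the three inputs of $\mathcal H$ lie above $N$ and where the output frequency falls — and in each regime extract enough of the interaction gain (such as the factor $\gain$ appearing in Proposition \ref{har-key-prop}) together with the $|\na|^{2-n}$ decay so that the net power of $N$ comes out negative; it is the balancing of these many competing cases, not any single inequality, that pins down the exponent $\frac{4(n-2)}{7n-8}$. A secondary but analogous technical point is establishing the mollified interaction Morawetz inequality of Proposition \ref{ebound} with a constant independent of $N$.
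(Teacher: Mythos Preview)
Your overall scheme is right, but you have misidentified where the threshold $\frac{4(n-2)}{7n-8}$ comes from, and this is a genuine gap.

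First, the decay exponent in Proposition \ref{ACL} does \emph{not} depend on $s$: the paper proves the energy increment is $O(N^{-3/2+\ep})$ for \emph{every} $0<s<1$. The frequency bookkeeping you describe does produce a fixed exponent $p=\tfrac32$, not a function $\gamma(n,s)$ that becomes positive only above the claimed threshold. So your sentence ``$\gamma(n,s)>0$ precisely for $s>\frac{4(n-2)}{7n-8}$'' cannot be how the restriction arises.

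Second, and this is the real issue, the number $L$ of subintervals is \emph{not} independent of $N$. The almost Morawetz inequality the paper actually uses (see \eqref{Imo}, \eqref{imora}) carries a factor $T^{\theta}$ with $\theta=\frac{n-2}{4(n-1)}$ in the main term; on the rescaled interval $[0,\la^{-2}T]$ with $\la\sim N^{-(1-s)/s}$ this produces a power of $N$ in the spacetime bound, and hence in $L$. The accounting step is therefore not ``choose $N$ large enough that $LCN^{-\gamma}<\ep_0$ with $L$ fixed,'' but rather: one needs $N$ so that $L\cdot N^{-p+}\ll 1$ where $L$ itself grows like $N^{\frac{1-s}{s}\cdot\frac{2(n-2)}{n}}$. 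This is possible exactly when $\frac{1-s}{s}\cdot\frac{2(n-2)}{n}<p=\tfrac32$, i.e.\ when $s>\frac{4(n-2)}{7n-8}$. In short: the threshold comes from balancing the $N$-growth of the iteration count against the fixed decay $N^{-3/2}$ of the almost conservation law, not from a regularity-dependent $\gamma$ in the ACL. Your proposal as written would, if correct, give global well-posedness for all $0<s<1$, which is not what is being claimed.
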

\noindent The global well-posedness  for the spherically symmetric data in $L^2$  was shown by  Miao, Xu and Zhao \cite{mxz}. They  adopted the method due to  Killip, Tao and Visan \cite{ktv}. For the 2-d cubic NLS,   Colliander and Roy \cite{cr} recently combined the improved estimate in \cite{ckstt08} with a Mowawetz error estimate by using the double layer bootstrap in time,
 and established the global well-posedness for the $L^2$ critical NLS on $\bbr^2$ with data in  $H^s$, $s> 1/3$. It seems highly possible that such approach also makes further progress for the Hartree equations  if it is combined with the results of this paper. We hope to address such issues somewhere else. Compared to the previous works, our proof of almost energy conservation and  interaction Morawetz inequality is more systematic and flexible. We believe that it may be useful in studies of  various related problems.

\smallskip

This paper is organized as follows: In Section 2 we will obtain the
bilinear interaction estimate,  trilinear
Hartree type interaction estimate, and prove Theorem \ref{hilbert}, \ref{trid}. In Section 3
we will show the local well-posedness and smoothing effect of
Duhamel's part of solutions to \eqref{hartree}. The Section 4 is
devoted to showing the global well-posedness of defocusing Hartree
equation. Lastly  we append  a brief introduction to wave packet
decomposition of Schr\"odinger wave, which will be used in Section
2.

\smallskip

We now list the notations
which are frequently used  in the paper:

\smallskip

\noindent $\bullet$ $A \lesssim B$ means that  $A \le C B$ for some
constant $C > 0$ which may vary from lines to lines. We
also write $A \sim B$ when $A \lesssim B$ and $B \lesssim A$.

\smallskip

\noindent$\bullet$ The symbol $\nabla$ denotes the gradient
$(\partial/\partial_1,\cdots,\partial/\partial_n)$ and $\Del$ the Laplacian $\nabla
\cdot \nabla = \sum_j \partial^2/\partial_j^2$. We also denote  $(-\Del)^\frac12$ by
$|\na|$.

\smallskip

\noindent$\bullet$ Let $J_T$ be the time interval $[0,T]$.
For a measurable function $F$ the mixed norm 
is defined by $\|F\|_{\mix{q}{r}(J_T\times \bbr^n)} =
\bigg(\int_{J_T}\bigg(\int_{\bbr^n}|F(t,x)|^r\,dx\bigg)^\frac{q}{r}\bigg)^\frac1q.$
We use $\|F\|_{\mix{q}{r}}$ to denote $\|F\|_{L_{t,x}^q(\bbr \times \bbr^n)}$. $L^p$ is the usual Lebesgue space $L_x^p(\bbr^n)$.

\smallskip

\noindent$\bullet$ The Fourier transform of $f$ is defined by
$\mathcal{F}(f)(\xi) = \widehat f(\xi) \equiv \int e^{-ix\cdot
\xi}f(x)\,dx$ and its inverse by $\mathcal{F}^{-1}(g)(x) \equiv
(2\pi)^{-n}\int e^{i x\cdot \xi}g(\xi)\,d\xi$. Hence $\schr f(x) =
\mathcal{F}^{-1}(e^{-it|\cdot|^2}\mathcal{F}(f))(x)$ $=
(2\pi)^{-n}\int_{\mathbb R^n} e^{i(x\cdot\xi-t|\xi|^2)} \widehat
f(\xi) d\xi$.

\smallskip
\noindent$\bullet$ Let $N$ denote dyadic number and let $P_N$ be the Littlewood-Paley projection
operator with symbol $\chi(\xi/N) \in C_0^\infty$ supported in the
annulus $A(N) = \{1/2N \le |\xi| \le 2N\}$ such that $\sum_N P_N=id$. We also define
$\widetilde P_1 = id - \sum_{N > 1}P_N$.

\smallskip

\noindent$\bullet$ The inhomogeneous Sobolev space $H^s (=
H^s(\mathbb{R}^n), s \in \mathbb{R})$ denotes  the space $\{f \in \mathcal S' :
\|f\|_{H^s} < \infty \}$, where $\|f\|_{H^s} \equiv (\sum_{N \ge
0}N^{2s}\|P_N f\|_{L^2}^2+\|f\|_2^2)^\frac12  \sim \|\langle \nabla \rangle^s
f\|_{L^2} = \left(\int \langle \xi \rangle^{2s}|\widehat
f(\xi)|^2\,d\xi \right)^\frac12$. Here $\langle A \rangle=\sqrt{1 + |A|^2}$. We will also use the homogeneous Sobolev space
$\dot H^s = \{f \in \mathcal S'/\mathcal P : \|f\|_{\dot Hs} <
\infty\}$, where $\mathcal P$ is the totality of polynomials and the
seminorm $\|f\|_{\dot H^s} \equiv (\sum_{N: dyadic}N^{2s}\|P_N
f\|_{L^2}^2)^\frac12 \sim \||\nabla|^s f\|_{L^2}$. We note here that
if $|s| < n/2$, the definition of $\dot H^s$ makes sense in
$\mathcal S'$ and $C_0^\infty$ is dense in $\dot H^s$ (cf. \cite{p}).

\section{Mixed norm interaction estimates for the Schr\"odinger waves}
In this section we prove first bilinear interaction estimates  for
the Schr\"odinger waves. Considering the mixed norm space, it is
possible to get a better interaction estimate than the one obtained
in \cite{kv}. We denote by $B(\xi,\rho)$ the ball centered at $\xi$
with radius $\rho$.

\begin{prop}\label{mixed-bi} Let $n\ge 2$.
Suppose that $\supp\widehat f\subset B(\xi_0,\rho_1)$ and
$\supp\widehat g\subset B(\eta_0,\rho_2)$ for some $|\xi_0|,$
$|\eta_0|\le 1$. If $|\xi_0-\eta_0|\sim 1$ and $0<\rho_1,\rho_2\ll
1$, then for $\epsilon>0$ and $(q,r)$ satisfying that $r\le 4$, $2<q$
and  $1-\frac2r\le \frac2q<(n+1)(\frac12-\frac1r)$
\[\|\schr f\schr g\mixednorm \lesssim \min(\rho_1,\rho_2)^{\alpha(q,r)-\epsilon} \|f\|_{L^2}\|g\|_{L^2} \]
where $\alpha(q,r)=(n+1)(1-2/r)-4/q.$
\end{prop}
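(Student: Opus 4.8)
The plan is to prove the estimate by combining the known bilinear restriction/adjoint restriction estimate for the paraboloid with interpolation against trivial $L^2$-type bounds, and to obtain the full claimed range of exponents by a wave packet decomposition together with induction on scales. First I would reduce matters to a local-in-time estimate on a unit time interval and rescale so that the separated frequency caps $B(\xi_0,\rho_1)$, $B(\eta_0,\rho_2)$ (with $|\xi_0-\eta_0|\sim 1$) can be treated via the parabolic rescaling that sends a cap of size $\rho$ to a cap of size $1$, converting the gain $\min(\rho_1,\rho_2)^{\alpha(q,r)}$ into a statement on dual caps. At the endpoint exponent governing the bilinear adjoint restriction estimate — namely the Tao–Vargas–Lee type bound $\|\schr f\schr g\|_{\mix{q/2}{r/2}}\lesssim \|f\|_{L^2}\|g\|_{L^2}$ valid when $\frac2q=(n+1)(\frac12-\frac1r)$ for transversally supported $f,g$ — the power $\alpha(q,r)=(n+1)(1-2/r)-4/q$ vanishes, so there the proposition is exactly the sharp bilinear estimate of \cite{lv, t5}; this should be taken as the anchor case.

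Away from that line, I would interpolate: on one side use the sharp bilinear estimate just described; on the other side use the trivial bounds coming from $L^2$ orthogonality and Bernstein's inequality on the caps, i.e. $\|\schr f\schr g\|_{\mix{1}{\infty}}\lesssim \min(\rho_1,\rho_2)^{n/2}\|f\|_{L^2}\|g\|_{L^2}$ (from Cauchy–Schwarz in frequency plus the volume of the smaller cap) and the $L^2_{t,x}$ orthogonality estimate $\|\schr f\schr g\|_{L^2_{t,x}}\lesssim \min(\rho_1,\rho_2)^{1/2}\|f\|_{L^2}\|g\|_{L^2}$ coming from transversality of the two pieces of paraboloid (the Jacobian of the sum map being $\sim|\xi-\eta|\sim1$ with the $\min(\rho_1,\rho_2)$ gain from integrating out the thin direction). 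Bilinear interpolation of these three estimates — tracking both the mixed-norm exponents $(q/2,r/2)$ and the power of $\min(\rho_1,\rho_2)$ — should yield the estimate in the stated open range $1-\frac2r\le\frac2q<(n+1)(\frac12-\frac1r)$, $r\le4$, $q>2$, with the $\epsilon$-loss absorbing the non-sharpness of the interpolation near the $L^2\times L^2\to L^2$ corner and the logarithmic losses in summing dyadic pieces. The condition $r\le 4$ enters because $r/2\le 2$ is where the relevant bilinear restriction machinery and the Córdoba/square-function arguments are available without further loss.

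The main obstacle is obtaining the \emph{full} range up to (but not including) the line $\frac2q=(n+1)(\frac12-\frac1r)$ with the \emph{correct} power $\alpha(q,r)$, rather than a weaker power. Plain three-point interpolation of the trivial estimates with a single endpoint will generally lose in the exponent $\alpha$, so the honest proof follows \cite{lv, t5}: perform a wave packet decomposition of $\schr f$ and $\schr g$ at the relevant scale, sort the packets by separation, handle the transverse interactions by $L^2$ orthogonality and the tangential ones by induction on the spatial scale $R$, and run the induction-on-scaling argument to close with the sharp constant. Keeping careful track of the extra $\min(\rho_1,\rho_2)$ factor through the induction — it behaves well since it is just the parabolic-rescaling Jacobian — is the delicate bookkeeping step; the mixed-norm (as opposed to diagonal $L^{(n+2)/n}$) formulation requires redoing the interpolation/orthogonality steps in $\mix{q/2}{r/2}$, which is where this proposition genuinely goes beyond \cite{kv}. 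I expect the wave-packet/induction core to be the technical heart, with the reduction steps and the interpolation against trivial bounds being routine.
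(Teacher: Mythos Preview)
Your overall architecture is close to the paper's, but there are two concrete gaps and one wrong exponent that would prevent the argument from closing.

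First, the $L^2_{t,x}$ estimate: you write $\|\schr f\schr g\|_{L^2_{t,x}}\lesssim \min(\rho_1,\rho_2)^{1/2}\|f\|_{L^2}\|g\|_{L^2}$, but the correct power is $\min(\rho_1,\rho_2)^{(n-1)/2}$. In the Plancherel/change-of-variables argument the Jacobian $|\xi_1-\eta_1|\sim 1$ handles one direction, and then one integrates out the remaining $n-1$ frozen frequency variables $\bar\xi=(\xi_2,\dots,\xi_n)$ by Cauchy--Schwarz, each contributing a factor $\rho_1^{1/2}$. With the wrong power $1/2$ your interpolation will not produce $\alpha(q,r)$ for $n\ge 3$. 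Relatedly, the paper's third interpolation vertex is the trivial $\|\schr f\schr g\|_{L^\infty_t L^1_x}\lesssim\|f\|_2\|g\|_2$ (no $\rho$ gain), not your $L^1_tL^\infty_x$ bound, which is not available globally in time.

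Second, once the $L^2_{t,x}$ power is corrected, plain interpolation of \eqref{bimixed}, \eqref{l22}, and the $L^\infty_tL^1_x$ bound already gives the full claimed range when $n=2,3$; no induction on scales is needed there. The induction argument is only required when $n\ge 4$, and its target is very specific: a \emph{local} estimate $\|\schr f\schr g\|_{L^1_tL^2_x(\mathcal Q(\lambda))}\lesssim \rho_1^{(n-3)/2}\lambda^\alpha\|f\|_2\|g\|_2$ near the excluded corner $(q,r)=(2,4)$, with $\alpha$ driven to $0^+$ by iterating the wave-packet step. Your description of the induction is too generic to see why this particular mixed-norm endpoint is the right one or why the exponent $(n-3)/2$ emerges.

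Third, you are missing the passage from local to global. The induction only yields an estimate on cubes $\mathcal Q(\lambda)$ with an arbitrarily small $\lambda^\alpha$ loss; the paper then invokes an $\epsilon$-removal/globalization lemma (Lemma~\ref{global}, from \cite{lv}) using the decay $|\widehat{d\sigma_i}(x,t)|\lesssim(1+|x|+|t|)^{-n/2}$ of the cap measures to convert this into a genuine global mixed-norm estimate at nearby exponents. Without this step your local estimate does not give the proposition.
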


It can be shown that the bounds in the above estimates are sharp up
to $\epsilon$. Indeed, assuming $\rho_1 \le \rho_2$,  let us
consider the functions $f$ and $g$ given by  $\widehat f = \chi_A$ and
$\widehat g = \chi_B$ for $A=\{\xi: |\xi_1-1|\le \rho_1^2, \
|\xi_i|\le \rho_1, i=2,\dots, n\}$ and $B=\{\xi: |\xi_1+1|\le
\rho_1^2, \ |\xi_i|\le \rho_1, i=2,\dots, n\}$.  Then it is easy to
see that $|\schr f(x)|,$ $|\schr g(x)|\ge c\rho_1^{n+1}$ if $|x_1|,
|t|\le c\rho_1^{-2}$ and $|x_i|\le c\rho_1^{-1}$ for some $c>0$,
$i=2,\dots, n$. Hence
\[\rho_1^{n+1-\frac{2(n+1)}r-\frac4q}\lesssim
\frac{\|\schr f\schr g\mixednorm}{\|f\|_{L^2}\|g\|_{L^2}}.
\] Letting
$\rho_1\to 0$ we can see that this implies $\alpha(q,r)-\epsilon\le
n+1-\frac{2(n+1)}r-\frac4q$ for any $\epsilon>0$. It also shows the
failure of the estimates when $\frac2q>(n+1)(\frac12-\frac1r)$. The
example above is actually the squashed cap function which was used
to show the sharpness of bilinear restriction estimates \cite{tvv}.

\begin{rem}  Using \eqref{l22} below and Plancherel's theorem, we can show that for $(q,r)$ satisfying $1-2/r\ge 2/q$ and $r\le 4$,
\[\|\schr f\schr g\mixednorm \lesssim \min(\rho_1,\rho_2)^{n(1-\frac2r)-\frac2q} \|f\|_{L^2}\|g\|_{L^2}.\]
It is also sharp as it can be shown by using the functions $f$ and $g$ with  $\widehat f = \chi_A$ and $\widehat g = \chi_B$ for
$A=\{\xi: |\xi-e_1|\le \rho_1\}$ and $B=\{\xi: |\xi_1+e_1|\le \rho_1\}$
\end{rem}

For the proof of Proposition \ref{mixed-bi} we will use the wave
packet decomposition for the Schr\"odinger operator. Such
decomposition was used to study Fourier restriction estimates
\cite{lv, t5, wo}.

\subsection*{Proof of Proposition \ref{mixed-bi}} By symmetry we may assume $\rho_1\le \rho_2$.
We start with recalling the estimates
\begin{align}\label{bimixed}
\|\schr f\schr g\|_{\mix {q/2} {r/2}}\lesssim \|f\|_{L^2}\|g\|_{L^2}
\end{align}
for $\frac2q<(n+1)(\frac12-\frac1r)$, $2 < q,r \le 4$. See Theorem 2.3 of \cite{lv}.
Also we make use of the estimate
\begin{align}
\label{l22}
\|\schr f\schr g\|_{\mix {2} {2}}&\lesssim \rho_1^{\frac{n-1}2} \|f\|_{L^2}\|g\|_{L^2}\footnote{It  actually reads as
$\|\schr f\schr g\|_{\mix {2} {2}}\lesssim \min(\rho_1, \rho_2)^{\frac{n-1}2} \|f\|_{L^2}\|g\|_{L^2}$ since we are assuming $\rho_1\le \rho_2$.},
\end{align}
which already appeared in several literatures (for instance see
\cite{bo} and \cite{kv}). For the convenience of reader we give a
simple proof based on Plancherel's theorem.

Using an affine transformation we may assume $\xi_0=0$. By decomposing  the  Fourier support of $g$ into finite number of sets, rotation and  dilation, it is enough to show \eqref{l22}
whenever $f$ and $g$ are Fourier-supported in $B(0,\rho_1)$ and $B(e_1, \delta)$ for some $0 < \delta \ll1$, respectively.
We write
\[ \schr f(x)\schr g(x)=\int e^{i(x \cdot (\xi+\eta)-t(|\xi|^2+|\eta|^2))} \widehat f (\xi)\widehat g(\eta) d\xi d\eta.\]
Freezing $\bar\xi=(\xi_2,\dots,\xi_n)$, we  consider a bilinear operator
\[B_{\bar\xi}(f,g)=\int e^{i(x\cdot (\xi+\eta)-t(|\xi|^2+|\eta|^2))} \widehat f (\xi_1,\bar\xi)\widehat g(\eta) d\xi_1 d\eta.\]
We make the change of variables
$\zeta=(\zeta_1, \zeta_2,\dots, \zeta_{n+1})=(\xi+\eta,|\xi|^2+|\eta|^2).$
Then by direct computation  one can see that
$\left|\frac{\partial \zeta}
{\partial(\xi,\eta_1)}\right|=2|\xi_1-\eta_1|\sim 1$
on the supports of $\widehat f$ and $\widehat g$. Hence making the change of variables $(\xi_1,\eta)\to \zeta$, applying Plancherel's theorem and
reversing the change variables  ($\zeta \to (\xi_1,\eta)$), we have
\[\|B_{\bar\eta}(f,g)\|_{\mix 2 2}\lesssim\|\widehat f(\xi_1, \bar \xi) \widehat g(\eta)\|_{\mi 2}. \]
Since $\schr f(x)\schr g(x)=\int  B_{\bar\xi}(\hat f(\cdot, \bar\xi),\hat g(\cdot)) d\bar\xi,$
by Minkowski's inequality we get
\[\|\schr f\schr g\|_{\mix 2 2} \lesssim \int  \|B_{\bar\xi}(\hat f(\cdot,\bar \xi)\hat g(\cdot))\|_{\mi 2}
d\bar\xi. \]
This gives the desired estimate \eqref{l22} by Cauchy-Schwarz inequality because $|\bar \xi|\le \rho_1$.

\smallskip

When $n=2, 3$ we only need to interpolate
\eqref{bimixed}, \eqref{l22} and $\|\schr f\schr g\|_{\mix {\infty} {1}}\lesssim \|f\|_2\|g\|_2$ for the proof of the theorem. It gives all the desired estimates.
Hence, similarly when $n \ge 4$,  it is sufficient to show that for $\epsilon>0$
\begin{align}
\label{l12}
\|\schr f\schr g\|_{\mix {q_\epsilon/2} {r_\epsilon/2}}&\lesssim \rho_1^{\frac{n-3}2-\epsilon}\|f\|_{L^2}\|g\|_{L^2}.
\end{align}
Here $(q_\epsilon, r_\epsilon)$ converges to $(2,4)$ as $\epsilon\to 0$. A similar estimate already appeared in
\cite{lv} for the wave operator and its proof is based on the induction on scale argument. We also follow the same lines of argument.

Let $\lambda$ be a large number so that $\lambda \gg \rho_1^{-\frac12}$
and let us set $\mathcal Q(\lambda)=Q(\lambda)\times (-\lambda,\lambda)$, where  $Q(\lambda)$ is the cube centered at the origin with side length $2\lambda$.
We make an assumption that
\begin{equation}\label{induct} \|\schr f\schr g\|_{\mix {1}{2}
(\mathcal Q(\lambda))}\lesssim \rho_1^{\frac{n-3}2}\lambda^\alpha \|f\|_{L^2}\|g\|_{L^2}.
\end{equation}
Due to $\eqref{l22}$ and H\"older's inequality  the above is  valid with $\alpha=1/2$.
Now we attempt to suppress $\alpha$ as small as possible.

Let $\{b\}$ be the collection
of the $\lambda^{1-\delta}$-cubes $b$ partitioning
$\mathcal Q(\lambda)$. We make use of the wave packet decomposition and Lemma \ref{wavepacket} which had crucial role in the proof of the sharp bilinear restriction estimates for the paraboloids \cite{t5}. We provided some basic properties of wave packets in the appendix.
Using wave packet decomposition at scale $\lambda$ and the triangle inequality, we have
\[
\|\schr f\schr g\|_{\mix {1}{2}
(\mathcal Q(\lambda))}\le \sum_{b} \|\sum_{T,T'} \schr f_T\schr g_{T'}\|_{\mix {1}{2} ( b)}.
\]
Using the relation $\sim$, we break the mixed integration over $b$ so that
\[
 \|\schr f\schr g\|_{\mix {1}{2} (\mathcal Q(\lambda))} \le  I+ I\!I,\]
where
 \begin{align*}
  I&=\sum_{b}\|\sum_{T\sim b \text{ and } T' \sim b}
e^{it\Delta}f_T e^{it\Delta}g_{T'}\|_{\mix {1}{2}  ( b)}, \\
I\!I&=\sum_{b}\|\sum_{T\not \sim b \text{ or } T'\not \sim b}
e^{it\Delta}f_T e^{it\Delta}g_{T'}\|_{\mix {1}{2}  ( b)}.
 \end{align*}
For the first we use the induction assumption \eqref{induct}  to get
\[I\le C\rho_1^{\frac{n-3}2}\lambda^{\alpha(1-\delta)}\sum_{b}\| \sum_{T\sim b} f_T\|_{L^2}\| \sum_{T\sim b} g_T\|_{L^2}\]
 because $b$ is a cube of size $\sim \lambda^{1-\delta}$.  Hence by \eqref{l2sum} and Schwarz's inequality
\[
 I\le C \rho_1^{\frac{n-3}2}\lambda^\epsilon\lambda^{\alpha(1-\delta)}\|f\|_{L^2}\|g\|_{L^2}.
 \]

H\"older's inequality and  \eqref{notb} give
\[
\|\sum_{T\not \sim b \text{ or } T'\not \sim b}
e^{it\Delta}f_T e^{it\Delta}g_{T'}\|_{\mix 1 2(b)}\le C\lambda^{c\delta-(n-3)/4} \|f\|_{L^2}\|g\|_{L^2}.
\]
Since there are only $\lambda^{c\delta}$-cubes $b$ and $\rho_1^2\gg \lambda^{-1}$,  it follows that
\[ I\!I\lesssim\rho_1^{\frac{n-3}2}\lambda^\epsilon\lambda^{c\delta} \|f\|_{L^2}\|g\|_{L^2}.\]
Combining two estimates for $I$ and $I\!I$, we get
\begin{equation}\label{ind}\|\schr f\schr g\|_{\mix {1}{2}
(\mathcal Q(\lambda))}\lesssim\rho_1^{\frac{n-3}2}(\lambda^{(1-\delta)\alpha} + \lambda^\epsilon\lambda^{c\delta}) \|f\|_{L^2}\|g\|_{L^2}.
\end{equation}

Therefore we see that the assumption \eqref{induct} implies the above estimate \eqref{ind}. Since $\epsilon, \delta>0 $  can be chosen to be arbitrarily small,
we get for any $\alpha>0$
\[
\|\schr f\schr g\|_{\mix {1}{2}
(\mathcal Q(\lambda))}\lesssim\rho_1^{\frac{n-3}2}\lambda^\alpha \|f\|_{L^2}\|g\|_{L^2}
\]
 by iterating this implication \eqref{induct} $\to$ \eqref{ind}
finitely many times\footnote[2]{For this one should note that the constant $c$ in \eqref{ind} is independent of $\epsilon, \lambda$.}.
To ungrade  this  to the global one, we need the following globalization lemma in \cite{lv}.

\begin{lem}\label{global} Let $S_1$ and $S_2$ be compact surfaces with
boundary $S_i=\{(\xi,\phi_i(\xi))\;: \;\;\xi\in U_i\}$ and the
induced Lebesgue measures $d\sigma_i(\xi)=d\xi,$ $i=1,2$, which
satisfy $\|d\sigma_i\|\lesssim_i$, $\sigma_i(B(z, \rho))\lesssim\rho^{n-1}$ for any $z,\rho>0$ and
$|\widehat{d\sigma_i}(x,t)|\le C_i(1+|x|+|t|)^{-\sigma}$ for some
$C_i\ge 1$ and  $\sigma > 0$. Suppose that for some
$\frac{2+2\sigma}\sigma\ge q_0,r_0\ge 1$ and  $0<\epsilon\ll
\sigma$,
\[
\|\prod_{i=1}^2\widehat{f_id\sigma_i}\|_{L_t^{q_0}L_x^{r_0}(\mathcal Q(\lambda))}
\le C_0\lambda^\epsilon \prod_{i=1}^2 \|f_i\|_{L^2(d\sigma_i)}.
\]
   Let
$
\frac1{q_1}=\frac1{q_0}-
\frac{2\epsilon}{2\epsilon+\sigma}\big(\frac1{q_0}-\frac{\sigma}{2(\sigma+1)}\big),$
$
\frac1{r_1}=\frac1{r_0}-
\frac{2\epsilon}{2\epsilon+\sigma}\big(\frac1{r_0}-\frac{\sigma}{2(\sigma+1)}\big).
$
Then, for $q>q_1$
\[
\|\prod_{i=1}^2\widehat{f_id\sigma_i}\|_{L_t^{q}L_x^{r_1}} \lesssim
C_0^{1-\frac\epsilon\sigma}(\max(C_1,C_2))^{a\epsilon+(1-\frac{q_1}q)(1-\frac1{r_1})}\prod_{i=1}^2
\|f_i\|_{L^2(d\sigma_i)}
\]
with some $a>0$ depending on $\sigma.$
\end{lem}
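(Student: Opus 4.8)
The plan is to run the $\epsilon$-removal (globalization) scheme of Bourgain and Tao--Vargas--Vega (\cite{bo,tvv}), in the bilinear mixed-norm form of Lee and Vargas (\cite{lv}): one deletes the $\lambda^\epsilon$ loss of the hypothesis at the cost of a small shift of the Lebesgue exponents, from $(q_0,r_0)$ to $(q_1,r_1)$, and of a small power of the decay constants $C_i$. Write $F=\prod_{i=1}^2\widehat{f_i\,d\sigma_i}$ and normalise $\|f_i\|_{L^2(d\sigma_i)}=1$, so that the target reads $\|F\|_{\mix{q}{r_1}}\lesssim C_0^{1-\epsilon/\sigma}(\max_iC_i)^{a\epsilon+(1-q_1/q)(1-1/r_1)}$ for $q>q_1$. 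I would first reduce this to a restricted-type statement: it suffices to bound, for each height $\alpha>0$, the superlevel set of $F$ (in a form that records the $t$- and $x$-exponents separately) by the right constant times $\alpha^{-q_1}$, respectively $\alpha^{-r_1}$; the passage to the strong mixed norm for $q>q_1$ is the usual interpolation with the trivial bound $\|F\|_{L^\infty}\le\|f_1\|_{L^1(d\sigma_1)}\|f_2\|_{L^1(d\sigma_2)}\lesssim\|d\sigma_1\|\,\|d\sigma_2\|\lesssim1$, which is also what produces the exponent $(1-q_1/q)(1-1/r_1)$ on $\max_iC_i$. Carrying the two Lebesgue exponents, and the order of the $t$- and $x$-integrations, through this scheme is bookkeeping rather than difficulty.

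The heart of the matter is a dichotomy for the superlevel set $E_\alpha=\{|F|>\alpha\}$, after covering it by a boundedly overlapping family of cubes of a side length $\lambda=\lambda(\alpha)$ which is fixed only at the very end. On the one hand, the hypothesised localized estimate on a single cube $\mathcal Q(\lambda)$ gives $|E_\alpha\cap\mathcal Q(\lambda)|\lesssim(C_0\lambda^\epsilon/\alpha)^{q_0}$ (in the mixed setting one also splits the time interval of the cube), so that $|E_\alpha|\lesssim M(C_0\lambda^\epsilon/\alpha)^{q_0}$ if $E_\alpha$ meets $M$ of the cubes. On the other hand one needs a bound for $M$ that is \emph{insensitive to the scale} $\lambda$, and this is exactly where the decay $|\widehat{d\sigma_i}(x,t)|\le C_i(1+|x|+|t|)^{-\sigma}$ enters: expanding each $f_i$ into wave packets adapted to $\lambda^{-1/2}$-caps, the decay shows that these packets concentrate, up to rapidly decreasing tails, on planks dual to their caps, and a $TT^*$/Schur-test computation --- in which the hypotheses $\sigma_i(B(z,\rho))\lesssim\rho^{n-1}$ and $\|d\sigma_i\|\lesssim1$ are precisely what makes the kernel bounds close --- furnishes an a priori estimate for $F$ summed over a sparse subfamily of $\lambda$-cubes whose constant involves $\max_iC_i$ and gains a fixed power of $\lambda$ over the naive per-cube count, hence a bound on $M$ in terms of $\alpha$, $\max_iC_i$ and a negative power of $\lambda$. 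Equating the two estimates for $|E_\alpha|$ and optimising in $\lambda$ --- the competition $\lambda^{\epsilon q_0}$ versus the negative power of $\lambda$ forces $\lambda\sim\alpha^{-c}$ with $c=c(\epsilon,\sigma)$ --- yields the restricted-type bound with $(q_1,r_1)$ exactly the exponents obtained by interpolating the localized $\mix{q_0}{r_0}$ estimate, with weight $\frac{2\epsilon}{2\epsilon+\sigma}$, against the scale-free endpoint estimate supplied by the decay (which is the $\epsilon\to\infty$ limit $q_1,r_1\to\frac{2+2\sigma}{\sigma}$ of the formula); the hypothesis $q_0,r_0\le\frac{2+2\sigma}{\sigma}$ is precisely what guarantees the optimal $\lambda$ to be $\gg1$, so that the localized estimate is being used at a legitimate scale, and it forces $q_1\ge q_0$ and $r_1\ge r_0$.

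I expect the one genuinely delicate step to be this scale-free a priori estimate --- extracting from the mere Fourier decay of the $d\sigma_i$ a bound for $\prod_i\widehat{f_i\,d\sigma_i}$, summed over a sparse family of $\lambda$-cubes, that really gains a fixed power of $\lambda$ over summing the trivial per-cube bounds. It requires the wave-packet expansion at the parabolic scale, the plank-concentration of each packet (which follows from the decay, not from any curvature assumption), and careful almost-orthogonality bookkeeping; the bilinearity has to be managed, but, since each factor carries its own decay and concentration, it only helps. Everything else --- the covering of $E_\alpha$, the dichotomy, the interpolation with $L^\infty$, the optimisation in $\lambda$, the separate tracking of the two Lebesgue exponents, and the harmless rescaling between $\mathcal Q(\lambda)$ and $B(0,\lambda)$ --- is routine, and this a priori estimate may in any case be quoted from the restriction-theory literature (\cite{bo,tvv,lv}).
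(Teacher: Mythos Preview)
The paper does not prove this lemma at all; it is quoted verbatim as ``the following globalization lemma in \cite{lv}'' and used as a black box in the proof of Proposition~\ref{mixed-bi}. So there is no proof in the paper to compare against, and your sketch is in fact a faithful outline of the $\epsilon$-removal argument that \cite{lv} carries out (itself an adaptation of the Bourgain/Tao--Vargas--Vega scheme to the bilinear mixed-norm setting). Your identification of the key step --- the scale-free a priori bound over a sparse family of $\lambda$-cubes, driven by the decay $|\widehat{d\sigma_i}|\lesssim C_i(1+|x|+|t|)^{-\sigma}$ via a $TT^*$/Schur argument --- is correct, as is the optimisation in $\lambda$ that produces the interpolation weight $\frac{2\epsilon}{2\epsilon+\sigma}$.

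One small inaccuracy: the sparse-cube estimate in this context is not obtained through a wave-packet expansion at the parabolic scale $\lambda^{-1/2}$; that machinery belongs to the proof of the \emph{localized} estimate (and the paper does use it there, via Lemma~\ref{wavepacket}). For the globalization step one works directly with the extension operator: if $\{Q_k\}$ are $\lambda$-cubes with $(C\lambda)$-separated centres, one bounds $\sum_k\|\widehat{f\,d\sigma}\|_{L^2(Q_k)}^2$ by pairing with characteristic functions, dualising, and using only the pointwise decay of $\widehat{d\sigma}$ together with the ball-growth bound $\sigma(B(z,\rho))\lesssim\rho^{n-1}$ to control the resulting kernel. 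No cap decomposition is needed. This is a cosmetic point and does not affect the correctness of your overall plan.
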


Let us define two extension operators by
$$
\widehat {h d\sigma_1}=\int e^{i(x\cdot\xi-t|\xi|^2)} \beta(\frac{\xi-\xi_0}{\rho_1})h(\xi) d\xi, \;\;\;\; \widehat {h d\sigma_2}=\int e^{i(x\cdot\xi-t|\xi|^2)} \beta(\frac{\xi-\eta_0}{\rho_2})h(\xi) d\xi$$
for smooth $\beta$ supported in $B(0,2)$ and $\beta=1$ on $B(0,1)$.
Since  $\supp \widehat f \subset B(\xi_0,\rho_1)$ and  $\supp \widehat g \subset B(\eta_0,\rho_2)$,
by Plancherel's  theorem  it is sufficient to show that
the estimate
\[\|\prod_{i=1}^2 \widehat{h_id\sigma_i}\|_{\mix{1}{2}(\mathcal Q(\lambda))}\lesssim\rho_1^\frac{n-3}2\lambda^\alpha \prod_{i=1}^2\|h_i\|_2\]
implies the global estimate
\[\|\prod_{i=1}^2 \widehat{h_id\sigma_i}\|_{\mix {q(\alpha)/2}{r(\alpha)/2}}\lesssim\rho_1^{\frac{n-3}2-\epsilon(\alpha)}\prod_{i=1}^2\|h_i\|_2\]
with $q(\alpha)\to 1$,  $r(\alpha)\to 2$ and $\epsilon(\alpha)\to 0$ as $\alpha\to 0$. Hence using
Lemma \ref{global}, we only need to check that
\[ |\widehat{d\sigma_i}(x,t)|\lesssim(1+|x|+|t|)^{-\frac{n}2}.\]
This is easy to see by using stationary phase method because $\rho_1,\rho_2\ll 1$. It completes the proof of
Proposition \ref{mixed-bi}.

\subsection{Proof of Theorem \ref{hilbert}: Bilinear interaction estimates}

We note that the bilinear estimate in Proposition \ref{mixed-bi} is invariant under rescaling when  $2/q=n(1/2 - 1/r)$. Hence,  by Proposition \ref{mixed-bi} and rescaling it is easy to see the following Corollary \ref{interaction},  which shows that there is an interactive compensation  when one considers the Schr\"{o}dinger waves of different frequency levels.
Throughout the paper we denote by $A(\rho)$ the set $\{\xi:|\xi|\sim \rho \} $.

\begin{cor}\label{interaction} Let $n\ge 2.$
Let $(q,r)$ satisfy that $2/q=n(1/2 - 1/r)$,  $2 \le  r \le 4$. If $\supp \widehat f\subset A(N_1)$ and
$\supp \widehat g \subset A(N_2)$ for $0<N_1\le N_2$, then for any $\epsilon>0$,
\[ \|\schr f\schr g\|_{\mix {q/2} {r/2}}\lesssim \left(\frac{N_1}{N_2}\right)^{1-2/r-\epsilon}\|f\|_{L^2}\|g\|_{L^2}.\]
\end{cor}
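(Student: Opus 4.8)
The plan is to reduce Corollary~\ref{interaction} to Proposition~\ref{mixed-bi}, using the scaling invariance of the bilinear estimate along the admissible line together with a decomposition of the higher-frequency factor into finitely many Fourier caps; no input beyond Proposition~\ref{mixed-bi} is needed. Two preliminary reductions: first, if $N_1\sim N_2$ then $(N_1/N_2)^{1-2/r-\epsilon}\sim 1$, so the estimate reduces to $\|\schr f\schr g\|_{\mixed}\lesssim\|f\|_{L^2}\|g\|_{L^2}$, which is immediate from H\"older's inequality and the Strichartz estimate for the admissible pair $(q,r)$; hence we may assume $N_1\le c_0N_2$ for a small fixed $c_0$. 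Second, the quantities $\|\schr f\schr g\|_{\mixed}$ and $\|f\|_{L^2}\|g\|_{L^2}$ transform identically under the parabolic rescaling $f\mapsto f(\lambda\cdot)$ exactly when $2/q=n(1/2-1/r)$, while $N_1/N_2$ is scale invariant; after such a rescaling (and a harmless fixed dilation placing the relevant Fourier-support centers in the unit ball) we may take $N_2=1$, $N_1=N\ll 1$, so that $\supp\widehat g\subset A(1)$ and $\supp\widehat f\subset A(N)\subset B(0,2N)$.

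Next I would pick a smooth partition of unity subordinate to a finite cover of $A(1)$ by balls $B(\eta_k,\delta)$ with $|\eta_k|\sim 1$ and $\delta$ a small fixed constant, writing $g=\sum_k g_k$ with $\supp\widehat g_k\subset B(\eta_k,\delta)$; the number of $k$ depends only on $n$ and $\delta$, and we may assume $N\ll\delta$ since otherwise $N\gtrsim\delta$ and the previous step applies. With $\xi_0=0$, $\rho_1=2N$, $\eta_0=\eta_k$, $\rho_2=\delta$ all the hypotheses of Proposition~\ref{mixed-bi} hold, and $\min(\rho_1,\rho_2)=2N$, so
\[\|\schr f\schr g_k\|_{\mixed}\lesssim N^{\alpha(q,r)-\epsilon}\|f\|_{L^2}\|g_k\|_{L^2},\qquad \alpha(q,r)=(n+1)(1-2/r)-4/q.\]
Along the admissible line $4/q=n(1-2/r)$, whence $\alpha(q,r)=1-2/r$. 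Summing the triangle inequality over the finitely many $k$ and using Cauchy--Schwarz with $\sum_k\|g_k\|_{L^2}^2\lesssim\|g\|_{L^2}^2$ yields $\|\schr f\schr g\|_{\mixed}\lesssim N^{1-2/r-\epsilon}\|f\|_{L^2}\|g\|_{L^2}$; undoing the rescaling turns $N$ into $N_1/N_2$, which is the assertion, and the $\epsilon$ is exactly the loss in Proposition~\ref{mixed-bi}.

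This argument handles all $(q,r)$ as in the corollary with $q>2$. The endpoint $r=2$ (then $q=\infty$) is trivial, since $\|\schr f\schr g\|_{L^\infty_tL^1_x}\le\|f\|_{L^2}\|g\|_{L^2}$ by Cauchy--Schwarz in $x$ and the target exponent is $0$; the remaining endpoint $q=2$ would be obtained from the endpoint version of the underlying bilinear restriction estimate, whose routine modifications I would omit. There is no real obstacle in any of this; the only points needing attention are the bookkeeping in the reduction to $N_1\ll N_2$ and checking, after the rescaling, that the caps of $g$ together with the low-frequency ball containing $\supp\widehat f$ meet the separation and smallness conditions of Proposition~\ref{mixed-bi}.
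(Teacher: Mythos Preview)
Your proof is correct and follows exactly the route indicated in the paper, which merely states that the corollary follows ``by Proposition~\ref{mixed-bi} and rescaling'' without writing out the details. Your reduction to $N_2=1$ via the scaling invariance along the admissible line, the decomposition of the high-frequency annulus into $O(1)$ caps to meet the separation hypothesis of Proposition~\ref{mixed-bi}, and the computation $\alpha(q,r)=1-2/r$ on the admissible line are precisely the steps the paper leaves implicit.
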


We  now give  the proof of the Theorem \ref{hilbert}.
The assertion for $s = 0$ follows from the H\"{o}lder's inequality and Strichartz estimate. By symmetry we may assume that $s > 0$.  Let $P_N$ be the Littlewood-Paley projection as stated in the introduction. For simplicity we set $f_N = P_N f$ and
 break $\schr f \schr g$ so that
\[\schr f \schr g=\sum_{N_1,\,N_2:\, dyadic} \schr f_{N_1}(\schr g_{N_2}).\]
for any $f \in \dot H^s$ and $g \in \dot H^{-s}$.
Since $|\nabla|\sim N_2$ on the Fourier support of $g_{N_2}$, it is enough to show that
\[\|\sum_{N_1,\,N_2}\schr f_{N_{1}}\schr g_{N_2}\|_{\mix{q/2}{r/2}} \lesssim \bigg(\sum_{N_1} N_1^{2s}\|f_{N_1}\|_{L^2}^2\bigg)^\frac12 \bigg(\sum_{N_2} N_2^{-2s}\|g_{N_2}\|_{L^2}^2\bigg)^\frac12.\]

Let us set $N_{12}=N_1N_2$. By the triangle inequality
\[\|\sum_{N_1,\,N_2} \schr f_{N_{1}}\schr g_{N_2}\|_{\mix{q/2}{r/2}}\le I+I\!I,\]
where
$$
I=\|\sum_{N_1\ge 1}\sum_{N_2} \schr f_{N_{12}}\schr g_{N_2}{\mixednorm},\;\;I\!I=\|\sum_{N_1<1}\sum_{N_2}\schr f_{N_{12}}\schr g_{N_2}{\mixednorm}.$$
Since $I\lesssim \sum_{N_1\ge 1}N_1^{2/r-1+\epsilon} \sum_{N_2}  \|f_{N_{12}}\|_{L^2}\|g_{N_2}\|_{L^2} $ by the triangle inequality and Corollary \ref{interaction}, we see that
\begin{align*}
I
&\lesssim \sum_{N_1\ge 1} N_1^{2/r-1-s+\epsilon}
            \sum_{N_2}  (N_{12})^{s}\|f_{N_{12}}\|_{L^2} N_2^{-s}\|g_{N_2}\|_{L^2}\\
&\lesssim \sum_{N_1\ge 1} N_1^{2/r-1-s+\epsilon}
            \big(\sum_{N_1} N_1^{2s}\|f_{N_1}\|_{L^2}^2\big)^\frac12
                \big(\sum_{N_2} N_2^{-2s}\|g_{N_2}\|_{L^2}^2\big)^\frac12 \\
&\lesssim \big(\sum_{N_1} N_1^{2s }\|f_{N_1}\|_{L^2}^2\big)^\frac12
                \big(\sum_{N_2} N_2^{-2s }\|g_{N_2}\|_{L^2}^2\big)^\frac12,
\end{align*}
provided $2/r-1-s+\epsilon<0$. We now turn to $I\!I$. By the triangle inequality and Corollary \ref{interaction}, $I\!I\lesssim \sum_{N_1<1}N_1^{1-2/r-\epsilon} \sum_{N_2}  \|f_{N_{12}}\|_{L^2}\|g_{N_2}\|_{L^2}.$ Hence, by Schwarz's inequality
\begin{align*}
I\!I
&\lesssim \sum_{N_1< 1} N_1^{1-2/r-\epsilon-s}
            \big(\sum_{N_1} N_1^{2s }\|f_{N_1}\|_{L^2}^2\big)^\frac12
                \big(\sum_{N_2} N_2^{-2s}\|g_{N_2}\|_{L^2}^2\big)^\frac12 \\
&\lesssim \big(\sum_{N_1} N_1^{2s }\|f_{N _1}\|_{L^2}^2\big)^\frac12
                \big(\sum_{N_2} N_2^{2(1-s) }\|g_{N_2}\|_{L^2}^2\big)^\frac12
\end{align*}
as long as  $1-2/r-\epsilon-s>0$. This completes the proof of Theorem \ref{hilbert}.

\subsection{Proof of Theorem \ref{trid}: Trilinear interaction of Hartree type nonlinearity}

First we recall the following which is a consequence of Strichartz estimate and Hardy-Littlewood -Sobolev inequality.
\begin{lem}\label{stricha}
For any admissible $(\tilde q,\tilde r)$,
\[\|\calh \amixednorm\lesssim \|f\|_{L^2}\|g\|_{L^2}\|h\|_{L^2}\]
and  the estimates are  invariant  under the rescaling
$(f,g,h)\to (f_\lambda, g_\lambda, h_\lambda)=(\lambda^\frac n2 f(\lambda \cdot),$ $\lambda^\frac n2 g(\lambda \cdot), \lambda^\frac n2 h(\lambda \cdot))$ for any $\lambda > 0$.
\end{lem}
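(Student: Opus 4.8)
The plan is to peel the three Schr\"odinger factors off one at a time, using H\"older's inequality in the mixed norm, the Hardy--Littlewood--Sobolev inequality to absorb the Riesz potential $|\nabla|^{2-n}=c_n|x|^{-2}\ast$, and the linear Strichartz estimate on each of $\schr f,\schr g,\schr h$. The scaling statement I would dispose of first and separately: from $\schr(f_\lambda)(t,x)=\lambda^{n/2}(\schr f)(\lambda^2t,\lambda x)$ and $|\nabla|^{2-n}[G(\lambda\,\cdot)]=\lambda^{2-n}(|\nabla|^{2-n}G)(\lambda\,\cdot)$ one computes
\[
\mathcal H(f_\lambda,g_\lambda,h_\lambda)(t,x)=\lambda^{2+n/2}\,\mathcal H(f,g,h)(\lambda^2t,\lambda x),
\]
and evaluating the $\amixed$ norm of the right side produces the dilation factor $\lambda^{2+n/2-2/\tilde q'-n/\tilde r'}$, which vanishes precisely because $(\tilde q,\tilde r)$ is admissible (equivalently $\frac2{\tilde q'}+\frac n{\tilde r'}=2+\frac n2$). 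Since $\|f_\lambda\|_{L^2}=\|f\|_{L^2}$ and likewise for $g,h$, the estimate is scale invariant; one can also just observe that each of H\"older, Hardy--Littlewood--Sobolev and Strichartz is individually scale invariant, so the composition is.

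For the estimate itself, fix three admissible pairs $(q_i,r_i)$, $i=1,2,3$, to be specified later, and auxiliary exponents $a,b,c$. By H\"older and then Hardy--Littlewood--Sobolev in the space variable, for each fixed $t$,
\[
\| |\nabla|^{2-n}(\schr f\schr g)\,\schr h\|_{L^{\tilde r'}_x}\le\| |\nabla|^{2-n}(\schr f\schr g)\|_{L^b_x}\|\schr h\|_{L^{r_3}_x}\lesssim\|\schr f\schr g\|_{L^c_x}\|\schr h\|_{L^{r_3}_x},
\]
where $\frac1{\tilde r'}=\frac1b+\frac1{r_3}$ and $\frac1c=\frac1b+\frac{n-2}n$ (this last being the Hardy--Littlewood--Sobolev gain of $\frac{n-2}n$ for the kernel $|x|^{-2}$ in $\bbr^n$). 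Taking the $L^{\tilde q'}_t$ norm, H\"older in time with $\frac1{\tilde q'}=\frac1a+\frac1{q_3}$, and then repeated H\"older on $\|\schr f\schr g\|_{L^a_tL^c_x}$ with $\frac1a=\frac1{q_1}+\frac1{q_2}$ and $\frac1c=\frac1{r_1}+\frac1{r_2}$, we arrive at
\[
\|\calh\amixednorm\lesssim\|\schr f\|_{L^{q_1}_tL^{r_1}_x}\|\schr g\|_{L^{q_2}_tL^{r_2}_x}\|\schr h\|_{L^{q_3}_tL^{r_3}_x}\lesssim\normfg
\]
by the Strichartz estimate.

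What remains, and is the only genuine point of care, is to verify that this bookkeeping can be closed: one must choose the three admissible pairs so that $a,b,c$ are legitimate exponents, in particular $1<c<b<\infty$ so that Hardy--Littlewood--Sobolev applies, and so that the value of $\tilde r'$ forced by the chain is exactly the one already dictated by the admissibility of $(\tilde q,\tilde r)$. Substituting the admissibility relations $\frac2{q_i}=n(\frac12-\frac1{r_i})$ into the exponent identities above, the constraint on $\tilde r'$ collapses to $\frac1{\tilde q'}=1+\frac n4-\frac n2\frac1{\tilde r'}$, which is simply the admissibility of $(\tilde q,\tilde r)$ rewritten; it is precisely here that the Hardy--Littlewood--Sobolev gain $\frac{n-2}n$ (rather than, say, $\frac2n$) enters and makes the dimensional count balance simultaneously for every $n\ge3$. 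Concretely, the symmetric choice $q_1=q_2=q_3=3\tilde q'$ keeps each $q_i$ in $[3,6]$ and each $r_i$ strictly between $2$ and $\frac{2n}{n-2}$, so all three pairs are admissible, no Strichartz or Hardy--Littlewood--Sobolev endpoint is touched, and indeed $1<c<b<\infty$. Everything else is a routine application of the three named inequalities.
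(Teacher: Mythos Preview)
Your proof is correct and is essentially the same as the paper's: the paper notes that for any admissible $(\tilde q,\tilde r)$ there is an admissible $(q,r)$ with $\big(\frac1{\tilde q'},\frac1{\tilde r'}\big)+\big(0,\frac{n-2}{n}\big)=3\big(\frac1q,\frac1r\big)$ and then applies H\"older together with Hardy--Littlewood--Sobolev, which is exactly your symmetric choice $q_1=q_2=q_3=3\tilde q'$; you have simply written out the exponent bookkeeping and the scaling computation that the paper leaves implicit.
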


To show this, observe that for any admissible $(\tilde q,\tilde r)$ there is an admissible $(q,r)$ such that $\left({1}/{\tilde q'},1/{\tilde r'}\right)+\left(0,({n-2})/n\right)=3\left(1/q,1/r\right).$ Then, using H\"older's and Hardy-Littlewood -Sobolev inequalities one can get the desired estimate.

Via frequency localization on annulus  we first obtain the following trilinear interaction estimate.

\begin{prop}\label{har-key-prop}
  Let $n\ge 3$ and $N_1,N_2,N_3$ be positive numbers. Suppose that $supp\, \widehat f$, $supp\, \widehat g$,  $supp \,\widehat h$  are contained in $A(N_1),$ $A(N_2),$ $A(N_3)$, respectively.
   Then for any admissible pair $(\widetilde q, \widetilde r)$,
  \begin{eqnarray}\label{hart-key} \|\calh\|_{\amixed}\lesssim C(N_1, N_2,N_3) \normfg,\end{eqnarray}
  where
\[ C(N_1,N_2,N_3)=\bigg(\;\frac{\min(N_1, N_2, N_3)}{\max(N_1, N_2, N_3)}\;\bigg)^{1/2}.
\]
 \end{prop}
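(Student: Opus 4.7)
The plan is to combine the bilinear interaction estimate of Corollary \ref{interaction} with duality, H\"older's inequality, the Hardy--Littlewood--Sobolev (HLS) inequality for $|\nabla|^{2-n}=c_n|x|^{-2}\ast$, and Strichartz's inequality. By the $f\leftrightarrow g$ symmetry of $\mathcal H$ and the scaling invariance recorded in Lemma \ref{stricha}, I would reduce to the case $N_1\le N_2$, normalizing one of the scales if convenient.

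First I would dualize, using the self-adjointness of $|\nabla|^{2-n}$, to rewrite the mixed norm as
\[\|\calh\amixednorm=\sup_{\|\phi\|_{\mix{\tilde q}{\tilde r}}\le 1}\Bigl| c_n\int\int\schr f(x)\,\schr g(x)\,\schr h(y)\,\bar\phi(y)\,|x-y|^{-2}\,dxdydt\Bigr|.\]
For an admissible pair $(q_0,r_0)$ with $r_0$ in the joint bilinear and HLS range $(2,4]\cap(2,\tfrac{2n}{n-2})$, the two-variable HLS inequality in $(x,y)$ applied pointwise in $t$, followed by H\"older in $t$ with dual exponents $(q_0/2,(q_0/2)')$ and a further H\"older split of $\|\schr h\,\bar\phi\|$, yields the product
\[\|\schr f\,\schr g\|_{\mix{q_0/2}{r_0/2}}\,\|\schr h\|_{\mix{A_1}{B_1}}\,\|\phi\|_{\mix{\tilde q}{\tilde r}}.\]
A short index-matching computation using the admissibilities of $(q_0,r_0)$ and $(\tilde q,\tilde r)$ shows that $(A_1,B_1)$ is forced to be admissible as well. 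Corollary \ref{interaction} then bounds the first factor by $(N_1/N_2)^{1-2/r_0-\epsilon}\|f\|_{L^2}\|g\|_{L^2}$, Strichartz bounds the second by $\|h\|_{L^2}$, and the last factor is $\le 1$, so
\[\|\calh\amixednorm\lesssim (N_1/N_2)^{1-2/r_0-\epsilon}\|f\|_{L^2}\|g\|_{L^2}\|h\|_{L^2}.\]
Taking $r_0$ as large as the joint range allows ($r_0=4$ for $n=3$, $r_0\nearrow 4$ for $n=4$) yields the gain $(N_1/N_2)^{1/2-\epsilon}$, which matches $(\min/\max)^{1/2}$ in the configurations where $N_3$ lies between $N_1$ and $N_2$.

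The main obstacle will be extracting the correct $(\min/\max)^{1/2}$ gain in the frequency orderings where $N_3$ is the smallest or the largest of the three scales, since the $(f,g)$-pairing above produces only the ratio $N_1/N_2$, which is then not the extremal one. The remedy is to repeat the scheme after rearranging the HLS kernel so that Corollary \ref{interaction} is applied to $(\schr g,\schr h)$ or $(\schr f,\schr h)$ instead of $(\schr f,\schr g)$, using the self-adjointness of $|\nabla|^{2-n}$ to move the operator between the two sides; the corresponding index-matching still closes to an admissible pair for Strichartz. For $n\ge 5$ the joint bilinear-HLS range for $r_0$ is restrictive and the exponent $1-2/r_0$ falls below $1/2$; to recover the $1/2$ exponent in those dimensions one exploits the Fourier localization $\widehat{\schr f\,\schr g}\subset A(N_1)+A(N_2)$, on which $|\nabla|^{2-n}$ acts as $\sim N_2^{2-n}$, and combines this with Bernstein's inequality on $\schr h$ (whose Fourier transform is supported in $A(N_3)$) to produce a compensating $(N_3/N_2)^{n-2}$ factor, which is $\le (N_3/N_2)^{1/2}$ whenever $N_3\le N_2$ since $n-2\ge 1$. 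A case analysis over the orderings of $N_1,N_2,N_3$ then assembles these variants into the claimed uniform bound $C(N_1,N_2,N_3)=(\min/\max)^{1/2}$ for all admissible $(\tilde q,\tilde r)$.
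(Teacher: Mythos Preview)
Your outline has a genuine gap in the regime $N_1\sim N_2$ with $N_3$ the extremal scale (either $N_3\ll N_1\sim N_2$ or $N_1\sim N_2\ll N_3$), and none of the three devices you list covers it. Applying Corollary~\ref{interaction} to the pair $(\schr f,\schr g)$ gives no gain since $N_1/N_2\sim 1$. The ``rearrangement via self-adjointness of $|\nabla|^{2-n}$'' cannot manufacture a pointwise product $\schr f\cdot\schr h$ or $\schr g\cdot\schr h$: moving the operator across the pairing only converts $\int |\nabla|^{2-n}(\schr f\,\schr g)\,\schr h\,\bar\phi\,dx$ into $\int \schr f\,\schr g\;|\nabla|^{2-n}(\schr h\,\bar\phi)\,dx$, so $f$ and $g$ remain multiplied at the same spatial point while $h$ sits inside a nonlocal operator together with the test function $\phi$. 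In the kernel form $\int\!\!\int \schr f(x)\schr g(x)\,|x-y|^{-2}\,\schr h(y)\bar\phi(y)\,dx\,dy$, the HLS inequality can only group $\{\schr f,\schr g\}$ against $\{\schr h,\bar\phi\}$; there is no way to make Corollary~\ref{interaction} see the pair $(f,h)$ or $(g,h)$. Your Fourier-support device also fails here: when $N_1\sim N_2$ the sumset $A(N_1)+A(N_2)$ contains a neighborhood of the origin, so $|\nabla|^{2-n}$ is \emph{not} comparable to $N_2^{2-n}$ on the support of $\widehat{\schr f\,\schr g}$.

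The paper handles precisely this case by a mechanism your outline is missing. One decomposes $|\nabla|^{2-n}=\sum_{N} N^{2-n}\psi(|\nabla|/N)$ dyadically, then tiles $A(N_1)\sim A(N_2)$ by cubes $Q$ of side $\sim N$ and writes $f=\sum_Q f_Q$, $g=\sum_{Q'} g_{Q'}$. The multiplier $\psi(|\nabla|/N)$ forces $\dist(Q,-Q')\lesssim N$, and a Fourier-series expansion of the joint symbol removes the coupling between $\xi$ and $\eta$, reducing matters to $\|\schr f_Q\,\schr g_{Q'}\,\schr h\|_{\mix{\tilde q'}{\tilde r'}}$ with $f_Q,g_{Q'}$ supported in $N$-cubes. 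Only now can one split by H\"older and apply the sharp $L^2_{t,x}$ bilinear estimate \eqref{l22} to a pair involving $h$ (whose support is well separated from those of $f_Q,g_{Q'}$), picking up the factor $\min(N,N_3)^{(n-1)/2}$; summing in $N$ yields the claimed $(\min/\max)^{1/2}$. Note also that even in the orderings where your scheme does close, Corollary~\ref{interaction} carries an unavoidable $\epsilon$-loss, so it delivers only $(\min/\max)^{1/2-\epsilon}$ rather than the sharp exponent stated in the proposition; the paper avoids this by using \eqref{l22} and Lemma~\ref{str} directly instead of Corollary~\ref{interaction}.
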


For the proof  it is enough to consider two endpoints $(\widetilde q', \widetilde r')=(1,2), \ (2,\frac{2n}{n+2})$ because interpolation gives the remaining estimates. By symmetry we may assume that $N_1\ge N_2$. On account of  scaling structure (Lemma \ref{stricha})
we  may also assume that
\[1=\max(N_1, N_3).\]
Hence we  can further assume that $1\gg \min(N_1,N_2,N_3)$ since the desired estimates are already contained in Lemma \ref{stricha}
when $N_1\sim N_2\sim N_3$. Then we prove  Proposition \ref{har-key-prop} by considering the cases $N_1\gg N_2$ and $N_1\sim N_2$, separately. To begin with, we recall the following simple lemma which can be easily shown by using
the Strichartz estimates and rescaling.
\begin{lem}
\label{str}
  If $\supp \widehat f\subset A(N)$, for $q,r\ge 2$ satisfying $n/r+2/q\le n/2$
\[
\|\schr f\|_{\mix q r}\lesssim N^{\frac n2-\frac nr-\frac2q}\|f\|_2.
\]
\end{lem}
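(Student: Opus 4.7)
The plan is the standard scaling reduction followed by Strichartz combined with Bernstein. First I would rescale to the unit-frequency case. Setting $g(x) = N^{-n/2}f(x/N)$ one has $\|g\|_{L^2} = \|f\|_{L^2}$ and $\widehat g$ supported in the fixed annulus $A(1)$. A direct computation using $\schr f(x) = N^{n/2}(e^{itN^2\Delta}g)(Nx)$ together with changes of variable in $x$ and $t$ yields
\[
\|\schr f\|_{\mix{q}{r}} = N^{\frac{n}{2}-\frac{n}{r}-\frac{2}{q}}\|\schr g\|_{\mix{q}{r}},
\]
so the problem reduces to proving the scale-invariant bound $\|\schr g\|_{\mix{q}{r}} \lesssim \|g\|_{L^2}$ for every $(q,r)$ with $q,r \ge 2$ and $n/r + 2/q \le n/2$ whenever $\widehat g$ is supported in $A(1)$.

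For that reduced estimate I would combine Strichartz at a carefully chosen admissible endpoint with Bernstein's inequality. Pick the admissible pair $(q_0, r_0)$ with $r_0 = \min(r, \frac{2n}{n-2})$; the usual Strichartz estimate (using the Keel--Tao endpoint when $r_0 = \frac{2n}{n-2}$) gives $\|\schr g\|_{\mix{q_0}{r_0}} \lesssim \|g\|_{L^2}$. Since $\schr g(t,\cdot)$ has Fourier support in $A(1)$ uniformly in $t$, Bernstein's inequality upgrades this to $\|\schr g\|_{\mix{q_0}{r}} \lesssim \|g\|_{L^2}$ (the direction $r \ge r_0$ is precisely why $r_0$ was chosen small). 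Combining energy conservation with Bernstein also yields the trivial bound $\|\schr g\|_{\mix{\infty}{r}} \lesssim \|g\|_{L^2}$ for every $r \ge 2$. Interpolation in $L^q_t$ between these two bounds then produces $\|\schr g\|_{\mix{q}{r}} \lesssim \|g\|_{L^2}$ for every $q \in [q_0,\infty]$.

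A short bookkeeping check closes the argument: in the range $2 \le r \le \frac{2n}{n-2}$ one has $r_0 = r$ and $q_0$ is determined by $n/r + 2/q_0 = n/2$, so the hypothesis $n/r + 2/q \le n/2$ is exactly $q \ge q_0$; in the range $r \ge \frac{2n}{n-2}$ one has $q_0 = 2$ and the hypothesis is automatic, reducing to $q \ge 2$. There is no substantive obstacle here; the only point requiring attention is that Bernstein only raises the spatial exponent, which is why the min in the definition of $r_0$ is essential, and one should verify that the resulting $q_0$ is never larger than the given $q$, which is precisely what the hypothesis $n/r + 2/q \le n/2$ guarantees.
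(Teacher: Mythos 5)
Your proof is correct and follows essentially the route the paper intends: the authors simply remark that the lemma "can be easily shown by using the Strichartz estimates and rescaling," and your rescaling to unit frequency followed by Strichartz, Bernstein (to raise the spatial exponent), mass conservation, and interpolation in time is exactly that argument, with the bookkeeping on $(q_0,r_0)$ done carefully.
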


\noindent{\textbf{Case $N_1\gg  N_2$.}} In this case  the spatial Fourier support of $\schr f \schr g$ is contained in $A(2N_1)$ because $N_1\gg  N_2$. Hence,  $\fd{2-n}\sim N_1^{2-n}$.  Using H\"ormander-Mikhlin multiplier theorem we see
\begin{equation}
\label{break}
\|\calh\|_{\amixed}\lesssim N_1^{2-n}\|\schr f\schr g\schr h\|_{\mix{\tilde q'}{\tilde r'}}.
\end{equation}
We now have three subcases $(i) \; 1=N_3\ge N_1\gg  N_2$,  $(ii) \; 1=N_1\ge N_3\ge  N_2$,  $(iii) \; 1= N_1\gg  N_2\ge N_3$.

We consider the case $(i)$ first.  Taking $(\widetilde q', \widetilde r')=(2,\frac{2n}{n+2})$ in \eqref{break} and using H\"older's inequality,  it follows that
$$
\|\calh\|_{\mix{2}{\frac{2n}{n+2}}}
\lesssim  N_1^{2-n}\|\schr g\schr h\|_{\mix 22}\|\schr f\|_{\mix{\infty}{n}}.
$$
Since $1\gg N_2$, using \eqref{l22} and Bernstein's inequality (or Lemma \ref{str}), we get
\[\|\calh\|_{\mix{2}{\frac{2n}{n+2}}}
\lesssim  N_2^{\frac12}\bigg(\frac{N_2}{N_1}\bigg)^\frac {n-2}2\normfg.\]
This gives the desired estimate for $(\widetilde q', \widetilde r')=(2,\frac{2n}{n+2})$.
Similarly, taking $(\widetilde q', \widetilde r')=(2,1)$ in \eqref{break} and using H\"older's inequality,  we have
$$
\|\calh\|_{\mix{1}{2}}
\lesssim  N_1^{2-n}\|\schr f\|_{\mix{2}{\infty}}\|\schr g\schr h\|_{\mix {2}2}.
$$
By \eqref{l22} and Lemma \ref{str}, we get
\begin{align*}
\|\calh\|_{\mix{1}{2}}\lesssim  N_2^{\frac12}\bigg(\frac{N_2}{N_1}\bigg)^\frac {n-2}2\normfg.
\end{align*}
Hence we get the desired bound for  $(\widetilde q', \widetilde r')=(2,1)$ because $N_2\le N_1$.

The remaining two cases $(ii), (iii)$ can be handled similarly.
In fact, for the case   $(ii)$, repeating the same argument, using \eqref{break}, \eqref{l22} and Lemma \ref{str} we see that
\begin{align*}
\|\calh\|_{\mix{2}{\frac{2n}{n+2}}}
     \lesssim \|\schr f\schr g\|_{\mix 22}\|\schr h\|_{\mix{\infty}{n}}
\lesssim  N_2^\frac{n-1}2N_3^\frac{n-2}2 \normfg
\end{align*}
and
\begin{align*}
\|\calh\|_{\mix{1}{2}}
      \lesssim \|\schr f\schr g\|_{\mix 22}\|\schr h\|_{\mix{2}{\infty}}
\lesssim  N_2^\frac{n-1}2N_3^\frac{n-2}2 \normfg.
\end{align*}
Because $1\gg N_2$. So  we get the desired estimate for $(\widetilde q', \widetilde r')=(1,2), (2,\frac{2n}{n+2})$.

Finally, for case  $(iii)$,  then by \eqref{break} and repeating the same argument
one can show that for  $(\widetilde q', \widetilde r')=(1,2), \ (2,\frac{2n}{n+2}),$
\begin{align*}
\|\calh\|_{\mix{\widetilde q'}{\widetilde r'}}\lesssim  N_3^\frac{n-1}2N_2^\frac{n-2}2 \normfg.
\end{align*}
This completes the proof for the case $N_1\gg  N_2$. Now we turn to the remaining case $N_1\sim N_2$.

\

\noindent{\textbf{Case $N_1\sim N_2$.}} In this case  $\fd{2-n}$ can not be handled simply as before. So we need an additional argument to handle this.  We begin with decomposing $\fd{2-n}$ so that
\[\fd{2-n}=\sum_{N:\text{dyadic}} N^{2-n} \psi( |\nabla|/{N})\]
with a cut-off $\psi$ supported in $A(1)$
\footnote{Actually the sum is taken over $N\lesssim \max(N_1,N_2)$ because of the supports of $\widehat f$, $\widehat g$.}.
Here $m(|\na|)$ is the multiplier operator defined  by $m(|\nabla|) f=\mathcal F^{-1}(m(|\cdot|)\widehat f)$ for a measurable  function $m$.
Then we have
\begin{equation}\label{decomp}
\calh=\sum_{N:\text{dyadic}} N^{2-n} \psi( |\nabla|/{N})(\schr f\schr g)\schr h.
\end{equation}
We first try to obtain estimates for $\psi( |\nabla|/{N})(\schr f\schr g)\schr h$.
We claim that for $(\widetilde q', \widetilde r')=(1,2), \ (2,\frac{2n}{n+2})$,
\begin{align}\begin{aligned}\label{claim}
\|\psi( |\nabla|/{N})&(\schr f\schr g)\schr h\|_{\mix{\widetilde q'}{\widetilde r'}}\\
&\lesssim N^{\frac{n-2}2}\big(\min(N,N_3)\big)^\frac{n-1}2\normfg.
\end{aligned}\end{align}

To show the claim  we break $f$ and $g$ into functions having Fourier supports in cubes of side length $2^{-2}N$.  Let $\{Q\}$ be a  collection of (essentially) disjoint cubes of side length $2^{-2}N$ covering
$A(N_1)$ and we  set
\[\widehat f_Q=\chi_Q(\xi) \widehat f, \quad \widehat g_Q=\chi_Q(\xi) \widehat g.\]
Then  we have
$f=\sum_Q f_Q$ and $ g=\sum_Q g_Q$,
and we may assume that $Q\subset A(N_1)$ because $N_1\sim N_2$.
Then it follows that
\begin{align}\begin{aligned}\label{string}
\mbox{LHS of \eqref{claim}}
&\lesssim
\sum_{Q,Q'}\|\psi( |\nabla|/{N})(\schr f_Q\schr g_{Q'})\schr h\|_{\mix{\widetilde q'}{\widetilde r'}}
\\
&\lesssim\sum_{\dist(Q,-Q')
\le 4N}\|\psi( |\nabla|/{N})(\schr f_Q\schr g_{Q'})\schr h\|_{\mix{\widetilde q'}{\widetilde r'}}
\end{aligned}\end{align}
because $\psi( |\nabla|/{N})(\schr f_Q\schr g_{Q'})=0$ if $\dist(Q,-Q')> 4N$.
Hence  it is enough to show that  for $(\widetilde q', \widetilde r')=(1,2), \ (2,\frac{2n}{n+2})$,
\begin{align}\begin{aligned}\label{12jj}
\|\psi( |\nabla|/{N})(\schr & f_Q\schr g_{Q'})\schr h\|_{\mix{\widetilde q'}{\widetilde r'}}\\
& \lesssim N^{\frac{n-2}2}\big(\min(N,N_3)\big)^\frac{n-1}2\|f_Q\|_{L^2}\|g_{Q'}\|_{L^2}\|h\|_{L^2}.
\end{aligned}\end{align}
Indeed, from \eqref{string} and \eqref{12jj} we get
\begin{align*}
\mbox{LHS of \eqref{claim}}
& \lesssim N^{\frac{n-2}2}\big(\min(N,N_3)\big)^\frac{n-1}2\sum_{\dist(Q,-Q')
\le 4N}\|f_Q\|_{L^2}\|g_{Q'}\|_{L^2}\|h\|_{L^2}
\\
& \lesssim N^{\frac{n-2}2}\big(\min(N,N_3)\big)^\frac{n-1}2
(\sum_{Q}\|f_Q\|_{L^2}^2)^\frac12(\sum_{Q'}\|g_{Q'}\|_{L^2}^2)^\frac12 \|h\|_{L^2}
\\
& \lesssim N^{\frac{n-2}2}\big(\min(N,N_3)\big)^\frac{n-1}2
\normfg.\nonumber
\end{align*}
For the second and third inequalities we used  Cauchy-Schwarz inequality and orthogonality, respectively.
Hence matters are reduced to showing \eqref{12jj}.

Now observe that
\begin{align*}
\psi( |\nabla|/{N})(\schr &f_Q\schr g_{Q'})=\iint e^{ix\cdot(\xi+\eta)-it(|\xi|^2+|\eta|^2)}\\
&
\times\phi(\xi/N-\xi_0)\psi((\xi+\eta)/N)\phi(\eta/N-\eta_0)
\widehat f_Q(\xi)\widehat g_{Q'}(\eta) d\xi d\eta
\end{align*}
for some $\xi_0,\eta_0\in \mathbb R^n$ and $\phi$ supported in $B(0,1)$.
Expanding $\Psi(\xi,\eta)=\phi(\xi-\xi_0)\psi(\xi+\eta)\phi(\eta-\eta_0)$ into
Fourier series on the cube of side length $2\pi$ which contains the support of $\Psi$, we have
\[\phi(\xi-\xi_0)\psi(\xi+\eta)\phi(\eta-\eta_0)=\sum_{k,\,l \,\in \,\mathbb{Z}^n} C_{k,\,l}\,e^{ i(k\cdot\xi+l\cdot\eta)}\]
with $\sum_{k,\,l}|C_{k,\,l}|\le C$, independent of $\xi_0,\eta_0$.
Plugging this in the above we get
\begin{align*}
&\qquad\psi( |\nabla|/{N})(\schr f_Q\schr g_{Q'})=\sum_{k,\,l\, \in \,\mathbb{Z}^n} C_{k,\,l}\,\schr f_Q^k \schr g_{Q'}^l
\end{align*}
with $\|f_Q^k\|_{L^2}=\|f_Q\|_{L^2}$ and $\|g_{Q'}^l\|_{L^2}=\|g_{Q'}\|_{L^2}$ for all $k,l$.
Hence to show \eqref{12jj}  it suffices to show that  for $(\widetilde q', \widetilde r')=(1,2), \ (2,\frac{2n}{n+2})$,
\[\|\schr f\schr g\schr h\|_{\mix{\widetilde q'}{\widetilde r'}}
\lesssim N^{\frac{2- n}2}\big(\min(N,N_3)\big)^\frac{n-1}2\|f\|_{L^2}\|g\|_{L^2}\|h\|_{L^2}\]
whenever $\widehat f,\widehat g$
are supported in cubes $Q, Q'\subset A(N_1)$ of side length $N$  and $\widehat h$ is supported in
$A(N_3)$. Note that $\dist(\supp \widehat f, \supp \widehat h)\sim 1$ and $\dist(\supp \widehat g, \supp \widehat h)\sim 1$ because there are only two possible cases $1=N_1\sim N_2\gg  N_3,$ $1=N_3\gg N_1\sim  N_2$. Hence, by H\"older's inequality, \eqref{l22}
and Lemma \ref{str}, we get
\begin{align*}
\|\schr f\schr g\schr h\|_{\mix{2}{\frac{2n}{n+2}}}
      &\lesssim \|\schr f\|_{\mix{\infty}{n}}\|\schr g\schr h\|_{\mix 22}
\\    &\lesssim N^{\frac{n-2}2}\big(\min(N,N_3)\big)^\frac{n-1}2 \normfg,
\end{align*}
and
\begin{align*}
\|\calh\|_{\mix{1}{2}}
      &\lesssim \|\schr f\schr g\|_{\mix 22}\|\schr h\|_{\mix{2}{\infty}}
\\    &\lesssim  N^{\frac{n-2}2}\big(\min(N,N_3)\big)^\frac{n-1}2\normfg.
\end{align*}
Hence we get \eqref{claim}.

We now consider two cases   $1=N_1\sim N_2\gg  N_3,$ $1=N_3\gg N_1\sim  N_2$, separately.
When $1=N_1\sim N_2\gg  N_3$,
from
\eqref{decomp}, triangle inequality and \eqref{claim} we get
\begin{align*}
\|\calh\|_{\mix{\widetilde q'}{\widetilde r'}}
&\lesssim  \sum_{N\le 4} N^{2-n} \|\psi( |\nabla|/{N})(\schr f\schr g)\schr h\|_{\mix{\widetilde q'}{\widetilde r'}}
\\
&\lesssim \sum_{N\le 4}N^{\frac{2-n}2}\big(\min(N,N_3)\big)^\frac{n-1}2\normfg.
\end{align*}Summation in $N$ gives
 \[\|\calh\|_{\mix{\widetilde q'}{\widetilde r'}}\lesssim  N^\frac12_3 \normfg.\]
This proves the case $(i) \; 1=N_1\sim N_2\gg  N_3$.  When $1=N_3\gg N_1\sim  N_2$ note that the summation is taken over $N\lesssim N_1$.  By \eqref{decomp}, triangle inequality and \eqref{claim} we get
 \begin{align*}
\|\calh\|_{\mix{\widetilde q'}{\widetilde r'}}
&\lesssim  \sum_{N\lesssim N_1}N^\frac12\normfg.
\end{align*}
The desired estimate follow from summation in $N$.  This completes the proof of Proposition \ref{har-key-prop}.

\smallskip

Before closing this subsection we state a slightly  strengthened version of Proposition \ref{har-key-prop}  which  is to be used in Section 4.

\begin{cor}\label{key-cor} Let $i=1,2,3.$ If $N_i\sim \min(N_1, N_2, N_3)$, then Proposition \ref{har-key-prop} remains valid  with $A(N_i)$ replaced by $B(0,N_i)$ in the assumption. 
\end{cor}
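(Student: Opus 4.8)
The plan is to reduce the ball assumption to the annular case by a straightforward dyadic decomposition on the smallest-frequency function, followed by summation of the resulting geometric series. Suppose, say, $N_1 \sim \min(N_1,N_2,N_3)$ and $\supp\widehat f \subset B(0,N_1)$, while $\supp \widehat g \subset A(N_2)$ and $\supp\widehat h\subset A(N_3)$ (the cases $i=2,3$ are identical by the symmetry of the argument, noting that $\mathcal H$ is symmetric in its first two slots and that the roles of the three functions entering Proposition \ref{har-key-prop} are interchangeable up to the combinatorics already handled there). First I would write $f = \sum_{M \le N_1} P_M f + \widetilde P_1 f$ when $N_1 \gtrsim 1$, or more simply $f = \sum_{M \lesssim N_1} P_M f$ after the usual scaling normalization, so that each piece $f_M$ has Fourier support in $A(M)$ with $M \lesssim N_1$.

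Next I would apply Proposition \ref{har-key-prop} to each triple $(f_M, g, h)$. Since $M \le N_1 \le \min(N_2,N_3)$ for every term in the sum, we have $\min(M,N_2,N_3) = M$ and $\max(M,N_2,N_3) = \max(N_2,N_3)$, so the constant becomes $C(M,N_2,N_3) = (M/\max(N_2,N_3))^{1/2}$. Summing over $M$ and using the triangle inequality together with Cauchy--Schwarz (or simply $\sum_{M\lesssim N_1}\|f_M\|_{L^2} \lesssim (\log)^{1/2}\,\|f\|_{L^2}$, which is harmless since the weight is summable), one gets
\begin{align*}
\|\mathcal H(f,g,h)\|_{\amixed}
&\lesssim \sum_{M \lesssim N_1} \Big(\frac{M}{\max(N_2,N_3)}\Big)^{1/2}\|f_M\|_{L^2}\|g\|_{L^2}\|h\|_{L^2}\\
&\lesssim \Big(\frac{N_1}{\max(N_2,N_3)}\Big)^{1/2}\|f\|_{L^2}\|g\|_{L^2}\|h\|_{L^2},
\end{align*}
where in the last step the geometric series $\sum_{M \lesssim N_1} M^{1/2}\|f_M\|_{L^2}$ is dominated by $N_1^{1/2}\|f\|_{L^2}$ after Cauchy--Schwarz in $M$ against the rapidly summable weight $M^{1/2}/N_1^{1/2}$. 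Since $N_1 \sim \min(N_1,N_2,N_3)$ and $\max(N_2,N_3) = \max(N_1,N_2,N_3)$, the right-hand side is exactly $C(N_1,N_2,N_3)\,\normfg$, which is the claimed bound.

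The only mild subtlety, and the point I would be most careful about, is the low-frequency piece $\widetilde P_1 f$ (equivalently the term $M \sim N_1$ when $N_1 \ll 1$): its Fourier support is a ball rather than an annulus, so one cannot literally invoke Proposition \ref{har-key-prop} for it. However this is not a real obstacle, because the proof of Proposition \ref{har-key-prop} in the case $N_1 \ll N_2$ only used that $\fd{2-n}\sim N_2^{2-n}$ on the support of $\schr f \schr g$ (which still holds when $\supp\widehat f$ is a small ball) together with the bilinear estimate \eqref{l22} and Bernstein's inequality (Lemma \ref{str}) — and both \eqref{l22} and Lemma \ref{str} are valid with a ball in place of an annulus (in fact \eqref{l22} is stated for $\supp\widehat f\subset B(\xi_0,\rho_1)$, and Bernstein's inequality only improves for lower-frequency balls). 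Thus one simply re-runs the relevant subcase of the proof of Proposition \ref{har-key-prop} verbatim for the ball-supported piece, picking up the same constant $(\min/\max)^{1/2}$, and the corollary follows.
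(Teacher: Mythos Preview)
Your approach---decompose the ball-supported function into dyadic annuli, apply Proposition \ref{har-key-prop} shell by shell, and sum the resulting geometric series---is exactly the paper's argument for the case when only one $N_i$ is comparable to the minimum. The paper, however, also explicitly treats the cases where two or all three of the $N_i$ are comparable to the minimum, so that two or three of the functions simultaneously carry ball support: when all three are comparable the bound is immediate from Lemma \ref{stricha}, and when exactly two are comparable (say $N_1\sim N_2\ll N_3$ or $N_2\sim N_3\ll N_1$) the paper observes, just as you do in your last paragraph, that the relevant subcase of the proof of Proposition \ref{har-key-prop} goes through verbatim with balls in place of annuli on the small-frequency functions. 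You should make these cases explicit, since the corollary as used in Lemma \ref{4.2} (via the convention $P_1=\widetilde P_1$) requires all minimal-frequency supports to be balls at once, not just one.

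One minor clean-up: if you sum over all dyadic $M>0$, the series $\sum_{M\lesssim N_1}M^{1/2}\|f_M\|_{L^2}\lesssim N_1^{1/2}\|f\|_{L^2}$ converges outright, so no separate low-frequency piece $\widetilde P_1 f$ arises in the single-ball argument. The observation in your last paragraph is therefore unnecessary there; its real role is precisely the two-ball case above.
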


\begin{proof} When all of $N_1,N_2,N_3\sim \min(N_1, N_2, N_3)$,  \eqref{hart-key} trivially holds  by Lemma \ref{stricha}. If only one $N_i$ of $N_1,N_2,N_3\sim \min(N_1, N_2, N_3)$, then by decomposition of $B(0,N_i)$ into dyadic shells, applying Proposition \ref{har-key-prop} to each dyadic shell and direct summation of geometric series one can easily see that    \eqref{hart-key} holds.  The other possibility is that two of $N_1,N_2,N_3\sim  \min(N_1, N_2, N_3)$. In this case we only need to consider two cases $N_2\sim N_3\ll N_1$ and $N_1\sim N_2\ll N_3$ by symmetry between $N_1$ and $N_2$. For both cases  one can see without difficulty that the argument for the proof of Proposition \ref{har-key-prop}
works for either  $\supp\widehat g\subset B(0,N_2)$ and $\supp\widehat h\subset B(0,N_3)$ or $\supp\widehat f\subset B(0,N_1)$ and $\supp\widehat g\subset B(0,N_2)$.
\end{proof}

We now prove Theorem \ref{trid} by showing \eqref{trid-1}, \eqref{trid-2}, separately.

\subsection{Proof of \eqref{trid-1}}
 For simplicity we denote by $f_{N_j} (j = 1,2,3)$ the Littlewood-Paley projection $P_{N_j} f$ of $f$ . Then we decompose
\begin{align*}
\calhd&=\sum_{N_1,N_2,N_3} \tfgdijk \\
&=\sum_{N_1\le N_2}\sum_{N_3} \tfgdijk +\sum_{N_1> N_2}\sum_{N_3} \tfgdijk.\end{align*}
By symmetry it is enough to handle the first one because the second can be handled similarly. Then we have three possible cases; $N_3\le N_1\le N_2$,  $N_1\le N_3\le N_2$ and $N_1\le N_2\le N_3$. We separately treat the summation of  each case.

\smallskip

\noindent{\textbf{$\text{Case}\; N_3\le N_1\le N_2$.} } This case is the easiest. It can be handled by using the Strichartz estimates only. We claim that
for any positive $\sss$ with $\ssum$,
\begin{equation}
\|\sum_{N_3\le N_1\le N_2}\tfgdijk\amixednorm \lesssim \hnormfg.
\end{equation}
By setting $N_2=N_3N_4\equiv N_{34}$ we write
\begin{align*}
\sum_{N_3\le N_1\le N_2}&\tfgdijk = \sum_{N_4 \ge 1}\sum_{N_3=-\infty}^\infty\sum_{N_3\le N_1\le N_{34}} \tfgdij{1}{34}{3}.
\end{align*}

Using Lemma \ref{stricha}  we get
\begin{align*}
\|\tfgdij{1}{34}{3}\amixednorm
\lesssim N_3\|f_{N_1}\|_{L^2}\|g_{34}\|_{L^2}\|h_3\|_{L^2}.
\end{align*}
So, the norm $\|\sum_{N_3\le N_1\le N_2}\tfgdijk \amixednorm$ is bounded by
\[ C\sum_{N_4\ge 1}\sum_{N_3}\sum_{N_3\le N_1\le N_{34}}
N_3N_1^{-s_1}N_{34}^{-s_2}N_3^{-s_3}(\sfa{1})(\sga{34})(\sha{3}).\]
It is also bounded again by
\begin{align*}
 C\|f\hhilnorm{s_1} \sum_{N_4\ge 1}N_4^{-s_2}
 \sum_{N_3}
(\sga{34})(\sha{3}).
\end{align*}
Then Cauchy-Schwarz inequality yields the desired bound.

\smallskip

\noindent{\textbf{Case $ N_1\le N_3\le N_2$.}}
In this case we set $N_2=N_1N_4 \equiv N_{14}$ and write
\begin{align*}
\sum_{N_1\le N_3\le N_2}&\tfgdijk = \sum_{N_4 \ge 1}\sum_{N_1}\sum_{N_1\le N_3\le N_{14}} \tfgdij{1}{14}{3}.
\end{align*}
Using triangle inequality and Proposition \ref{har-key-prop} we see
\begin{align*}
\|\tfgdij{1}{14}{3}\amixednorm
\lesssim  N_3N_4^{-\frac12}\|f_1\|_{L^2}\|g_{14}\|_{L^2}\|h_3\|_{L^2}.
\end{align*}
Hence the norm $\|\sum_{N_1\le N_3\le N_2}\tfgdijk\amixednorm$ is bounded by
\begin{align*}
\|h\hhilnorm{s_3}\sum_{N_4 \ge 1} N_4^{-\frac12}
\sum_{N_1}\sum_{N_1\le N_3\le N_{14}}N_3^{1-s_3}
N_1^{-s_1}N_{14}^{-s_2}(\sfa{1})(\sga{14}).
\end{align*}
Taking summation in $N_3$ and using Schwarz's inequality in $N_1$, we bounds this by
\[ C\|f\hhilnorm{s_1}\|g\hhilnorm{s_2}\|h\hhilnorm{s_3} \sum_{N_4 \ge 1} N_4^{s_1-\frac12}.\]
Note that $s_1<\frac12$ because $s_3>\frac12$. Hence  we get the desired.

\smallskip

\noindent{\textbf{$\text{Case } N_1\le N_2\le N_3$.}}
We set $N_3=N_1N_4 \equiv N_{14}$ and write
\begin{align*}
\sum_{N_1\le N_2\le N_3}&\tfgdijk = \sum_{N_4 \ge 1}\sum_{N_1}\sum_{N_1\le N_2\le N_{14}} \tfgdij{1}{2}{14}.
\end{align*}
Using triangle inequality and Proposition \ref{har-key-prop} we have
\begin{align*}
\|\sum_{N_1\le N_2\le N_3}&\tfgdijk\amixednorm\\
&\lesssim \sum_{N_4 \ge 1}\sum_{N_1}\sum_{N_1\le N_2\le N_{14}}  N_{14}N_4^{-\frac12}\|f_1\|_{L^2}\|g_{2}\|_{L^2}\|h_{14}\|_{L^2}.
\end{align*}
The left hand side of the above is bounded by
\begin{align*}
C\|g\hhilnorm{s_2}\sum_{N_4 \ge 1} N_4^{-\frac12}
\sum_{N_1}\sum_{N_1\le N_2\le N_{14}}
N_1^{-s_1}N_2^{-s_2}N_{14}^{1-s_3}(\sfa{1})(\sha{14}).
\end{align*}
Taking summation in $N_2$ and using Schwarz's inequality in $N_1$, the above is bounded  by a constant multiple of
$\|f\hhilnorm{s_1}\|g\hhilnorm{s_2}\|h\hhilnorm{s_3} \sum_{N_4 \ge 1} N_4^{-s_3+\frac12}.$
Since $s_3>\frac12,$  we get the desired.

\subsection{Proof of \eqref{trid-2}}
We decompose
\begin{align*}
\caldh=\sum_{N_1,N_2,N_3} \tddfgijk.
\end{align*}
There is no obvious symmetry. We should consider the following six cases:
\begin{eqnarray*}
      &\,\,(i)\, N_1\ge  N_2\ge  N_3,\;  \,\,\,(ii)\, N_1\ge N_3\ge N_2, \; \,(iii)\, N_3\ge N_1\ge  N_2,\\
      &(iv)\, N_2\ge N_1\ge N_3,  \; (v)\, N_2\ge N_3\ge  N_1, \; \,(vi)\, N_3\ge N_2\ge N_1.
\end{eqnarray*}
As expected, the cases $(i)$ and $(ii)$ are the major parts. The others are sort of minor terms. Each of the cases can be handled by the same argument as before.

\noindent{\textbf{$\text{Case }(i) \ N_1\ge N_2\ge N_3$:}} To begin with,
we set $N_1=N_{34}$. By the triangle inequality and rearrangement of the summation we get
\begin{align*}
\|\sum_{N_3\le N_2\le N_1}&\tddfgijk\amixednorm\\
&\lesssim
\sum_{N_4 \ge 1}\sum_{N_1}\sum_{N_{34}\ge N_2\ge N_{3}}\|\tddfgij{34}{2}{3}\amixednorm.
\end{align*}
Applying Proposition \ref{har-key-prop}
we bound the left hand side of the above by
\begin{align*}
\|g\hhilnorm{s_2}\sum_{N_4 \ge 1} N_4^{-\frac12}
\sum_{N_3}\sum_{N_{34}\ge N_2\ge N_{3}}
N_{34}^{1-s_1}N_2^{-s_2}N_{3}^{-s_3}(\sfa{34})(\sha{3}).
\end{align*}
Taking summ in $N_2$ and using Schwarz's inequality in $N_1$,  we see that
\begin{align*}
\|\sum_{N_3\le N_2\le N_1}\tddfgijk\amixednorm
&\lesssim
\|f\hhilnorm{s_1}\|g\hhilnorm{s_2}\|h\hhilnorm{s_3} \sum_{N_4 \ge 1} N_4^{-s_1+\frac12}.
\end{align*}
Since $s_1>\frac12,$  we get the desired.

\smallskip

\noindent{\textbf{$\text{Case }(ii) \ N_1\ge N_3\ge N_2$:}}
We set $N_1=N_{24}$ and rearrange the summation such that
\[\sum_{\ N_1\ge N_3\ge N_2}=\sum_{N_1}\sum_{N_2}\sum_{N_{24}\ge N_3\ge N_2}.\]
While applying triangle inequality and Proposition \ref{har-key-prop}, the only difference to the previous case $(i)$ is that $N_3$ is replaced by $N_2$.  Hence by the same argument we get
\begin{align*}
\|\sum_{\ N_1\ge N_3\ge N_2} (\cdot)\amixednorm\lesssim
\|f\hhilnorm{s_1}\|g\hhilnorm{s_2}\|h\hhilnorm{s_3} \sum_{N_4 \ge 1} N_4^{-s_1+\frac12}.
\end{align*}
Since $s_1>\frac12,$ we get the desired.

\smallskip
The remaining cases $(iii)-(vi)$ can be handled by the same way. Repeating the argument one can show
\begin{align*}
\|\sum_{N_3\ge N_1\ge N_2} (\cdot)\amixednorm, \ \lesssim
\|f\hhilnorm{s_1}\|g\hhilnorm{s_2}\|h\hhilnorm{s_3} \sum_{N_4 \ge 1} N_4^{-\frac12+s_2},\\
\|\sum_{N_2\ge N_1\ge N_3} (\cdot)\amixednorm \lesssim \|f\hhilnorm{s_1}\|g\hhilnorm{s_2}\|h\hhilnorm{s_3} \sum_{N_4 \ge 1} N_4^{-\frac12+s_3}.
\end{align*}
Since $s_1>\frac12,$ $s_2, s_3<\frac12$, we get the desired. One can also show
\begin{align*}
\|\sum_{N_3\ge N_2\ge N_1} (\cdot)\amixednorm, \ \lesssim
\|f\hhilnorm{s_1}\|g\hhilnorm{s_2}\|h\hhilnorm{s_3} \sum_{N_4 \ge 1} N_4^{-\frac12-s_3},\\
\|\sum_{N_2\ge N_3\ge N_1} (\cdot)\amixednorm \lesssim \|f\hhilnorm{s_1}\|g\hhilnorm{s_2}\|h\hhilnorm{s_3} \sum_{N_4 \ge 1} N_4^{-\frac12-s_2}.
\end{align*}
Therefore this completes the proof of Theorem \ref{trid}.

\section{Smoothing properties}
In this section we prove Theorem \ref{hart}. The proof relies on the arguments using the Bourgain space $\mathbf{X}^{s,\,b}$  for $s, b \in \bbr$. It consists of the functions $u$ such that
\[\|u\|_{\mathbf X^{s,\,b}} \equiv \left(\int\!\!\int \langle \xi\rangle^{2s}\langle \tau-|\xi|^2\rangle^{2b}|\widetilde{u}(\tau, \xi)|^2\,d\tau d\xi\right)^\frac12 < \infty,\]
where $\widetilde{u}(\tau, \xi)$ is the time-space Fourier transform of $u$. We also use the norm $\mathbf{X}^{s,\,b}(J_T)$ for time interval $J_T = [0,T]$ defined as
\[\|u\|_{\mathbf X^{s,\,b}(J_T)} \equiv \inf \{\|\varphi\|_{\mathbf X^{s,\,b}} : \varphi|_{J_T} = u\}.\]

\subsection{ Hartree type nonlinearity}
In this section $V(u)$ denotes $\kappa |x|^{-2}\ast |u|^2$.
Let us invoke $V(u)=c|\nabla|^{2-n}(|u|^2)$ for some constant $c$.
Using  the $\xsp$ spaces and Theorem \ref{trid},
  one can derive the following.

\begin{prop}\label{trixxx} Let $n\ge 3$. Then for any $s, b >\frac12$ there exists $0 < \epsilon \ll 1$ such that
\begin{equation} \label{xx}
\|[V(u)u] \xsnorm{1}{-\frac12+\epsilon}
\lesssim \|u\xsnorm{s}{b}^3.
\end{equation}
\end{prop}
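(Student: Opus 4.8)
The plan is to derive \eqref{xx} from Theorem \ref{trid} by means of the transfer principle for the Bourgain spaces. Since the operator $\lnabla$ commutes with the weight $\langle\tau-|\xi|^2\rangle$, one has $\|V(u)u\xsnorm{1}{-\frac12+\epsilon}=\|\lnabla(V(u)u)\xsnorm{0}{-\frac12+\epsilon}$. I would then fix a non-endpoint admissible pair, say $\tilde q=\tilde r=\frac{2(n+2)}n$ (so $\tilde q>2$), and use the standard Bourgain-space embedding $\|w\|_{\mix{\tilde q}{\tilde r}}\lesssim\|w\xsnorm{0}{\frac12-\epsilon}$, valid for $\tilde q>2$ and $\epsilon>0$ sufficiently small; dualizing it with respect to the $L^2_{t,x}$ pairing yields $\|F\xsnorm{0}{-\frac12+\epsilon}\lesssim\|F\amixednorm$. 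Hence it suffices to prove $\|\lnabla(V(u)u)\amixednorm\lesssim\|u\xsnorm{s}{b}^3$.

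To this end I would write $u$ as a superposition of modulated free waves, $u=\int_{\bbr}e^{it\mu}\big(e^{-it\Delta}u_\mu\big)\,d\mu$ with $\int_{\bbr}\langle\mu\rangle^{2b}\|u_\mu\|_{H^s}^2\,d\mu\sim\|u\xsnorm{s}{b}^2$. Inserting this into $V(u)u=c\,\fd{2-n}(u\,\overline u)\,u$, the resulting scalar phases $e^{it(\mu_1-\mu_2+\mu_3)}$ have modulus one and are therefore irrelevant for the $\amixed$ norm, so Minkowski's integral inequality reduces everything to a bound, uniform in $(\mu_1,\mu_2,\mu_3)$, of the form $\big\|\lnabla\big(\fd{2-n}\big((e^{\pm it\Delta}u_{\mu_1})\overline{(e^{\pm it\Delta}u_{\mu_2})}\big)e^{\pm it\Delta}u_{\mu_3}\big)\big\|_{\mix{\tilde q'}{\tilde r'}}\lesssim\|u_{\mu_1}\|_{H^s}\|u_{\mu_2}\|_{H^s}\|u_{\mu_3}\|_{H^s}$, followed by integration in the $\mu_j$.

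This uniform trilinear bound is the heart of the matter, and it is exactly where Theorem \ref{trid} is used. After a Littlewood--Paley decomposition of the three inputs, the output frequency is comparable to the largest of the three input frequencies, with the customary care when the two largest are comparable; hence $\lnabla$ may be placed on the factor carrying that largest frequency, where it acts like $|\nabla|$, leaving an $O(1)$ multiplier on the other two factors. One thus gets a sum of three model expressions of the shapes $\mathcal{H}(\nabla\,\cdot\,,\,\cdot\,,\,\cdot\,)$, $\mathcal{H}(\,\cdot\,,\nabla\,\cdot\,,\,\cdot\,)$ and $\mathcal{H}(\,\cdot\,,\,\cdot\,,\nabla\,\cdot\,)$ (the signs on the propagators and the conjugations are harmless, since every estimate in Section 2 is invariant under $t\mapsto-t$ and under conjugation, and the homogeneous Sobolev norms are insensitive to conjugation). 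For the first two I would invoke the symmetry of $\mathcal H$ in its first two arguments together with \eqref{trid-2}, and for the third \eqref{trid-1}; in each case one chooses positive exponents $s_1,s_2,s_3$ with $s_1+s_2+s_3=1$, giving the value $\frac12+\delta$ to the slot that carries the derivative so that the hypothesis ($s_1>\frac12$, resp. $s_3>\frac12$) of the relevant part of Theorem \ref{trid} is met, and distributing the remaining $\frac12-\delta$ among the other two. Because $s>\frac12$, one may take $\delta>0$ so small that $s_1,s_2,s_3<s$; then $\|u_{\mu_j}\|_{\dot H^{s_j}}\le\|u_{\mu_j}\|_{H^{s_j}}\le\|u_{\mu_j}\|_{H^s}$, and Theorem \ref{trid} supplies the required bound. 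Integrating then costs, by Cauchy--Schwarz, only a factor $\big(\int_{\bbr}\langle\mu\rangle^{-2b}\,d\mu\big)^{1/2}<\infty$ in each modulation variable (this is where $b>\frac12$ is used), producing $\|u\xsnorm{s}{b}^3$ and completing the argument.

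I expect the main obstacle to be the very first reduction together with this exponent bookkeeping: one must apply the Bourgain-space/dual-Strichartz embedding at a genuinely non-endpoint admissible pair so that the modulation gain on the output is $-\frac12+\epsilon$ rather than $-\frac12-\epsilon$, and one must verify that the single derivative on the output can, in every frequency configuration, be matched to exactly one of the two cases of Theorem \ref{trid} --- which is possible precisely because the assumption $s>\frac12$ leaves just enough room to place the $>\frac12$ weight on the differentiated factor while keeping the other two weights positive and $\le s$. The remaining high$\times$high$\to$low configurations, in which the output frequency is $\lesssim 1$ and no derivative has to be produced, are handled in the same framework, using the decay factor $C(N_1,N_2,N_3)$ of Proposition \ref{har-key-prop} (and Corollary \ref{key-cor}) together with the positivity of the $s_j$ to sum the Littlewood--Paley pieces.
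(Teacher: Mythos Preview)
Your overall architecture—duality, transfer to free solutions, then Theorem \ref{trid}—is the same as the paper's, but there is a genuine gap at the first reduction. The embedding you invoke, $\|w\|_{\mix{\tilde q}{\tilde r}}\lesssim\|w\xsnorm{0}{\frac12-\epsilon}$, is \emph{false} at admissible $(\tilde q,\tilde r)$, even when $\tilde q>2$. The transfer principle (and Lemma \ref{xsb-str}) gives this embedding only with exponent $b>\tfrac12$ on the Strichartz line; interpolating with $L^2_{t,x}=\mathbf X^{0,0}$ produces $b<\tfrac12$ only at pairs strictly inside the region $2/q+n/r>n/2$. Equivalently, the dualized statement $\|F\xsnorm{0}{-\frac12+\epsilon}\lesssim\|F\amixednorm$ fails for admissible $(\tilde q,\tilde r)$; what holds is only $\|F\xsnorm{0}{-\frac12-\epsilon}\lesssim\|F\amixednorm$, which is the wrong sign for \eqref{xx}. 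Since Theorem \ref{trid} is scale-invariant and lives exactly on the admissible line, you cannot pair it directly with an $\mathbf X^{0,\frac12-\epsilon}$ bound on the test function.

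The paper repairs precisely this point. Before dualizing, it interpolates \eqref{trid-1}, \eqref{trid-2} with a crude estimate $\|\mathcal H(\nabla f,g,h)\|_{L^\infty_{t,x}}+\|\mathcal H(f,g,\nabla h)\|_{L^\infty_{t,x}}\lesssim\prod_i\|\cdot\|_{H^{s_0}}$ for some large $s_0$; this moves the trilinear bound to exponents $(\tilde q'+\epsilon_1,\tilde r'+\epsilon_2)$ with $\sum s_i>1$ and inhomogeneous norms. The H\"older-conjugate of that pair then sits strictly inside the Strichartz region, where the second part of Lemma \ref{xsb-str} \emph{does} give $\|\psi\|_{L_t^{p}L_x^{q}}\lesssim\|\psi\xsnorm{0}{\frac12-\epsilon}$, and the duality closes. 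Your distribution of $\nabla$ over the three slots via Leibniz and the choice of $s_1,s_2,s_3$ with the differentiated slot getting weight $\tfrac12+\delta$ are fine; what is missing is this off-line perturbation step (or an equivalent device) that buys the $\epsilon$ of modulation room on the output.
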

\begin{proof} We first show that
for  $s_0 > 2 + \frac n2$
\begin{align}\label{tri-infty}
\|\caldh\|_{\mix\infty\infty} + \|\calhd\|_{\mix\infty\infty}
\lesssim\|f\hilnorm{s_0}\|g\hilnorm{s_0}\|h\hilnorm{s_0}.\end{align}
Here we do not intend to obtain sharp $s_0$ but we here are content with some crude estimate
which is enough for our purpose. From the Sobolev embedding we note that
$\|\schr \langle \nabla \rangle f\|_{\mix\infty\infty} \lesssim \|f\hilnorm{s_0}$.
Hence it is enough to show that
\[\||\nabla|^{2-n}(\schr \langle\nabla \rangle f\schr g)\|_{\mix\infty\infty}\lesssim \|f\hilnorm{s_0}\|g\hilnorm{s_0}.\]
But this follows easily from the observation that
\begin{align*}
\||\nabla|^{2-n}G\|_{L_x^\infty}
&\lesssim \bigg(\int_{|\xi|\le 1} + \int_{|\xi|\ge 1} \bigg)|\xi|^{2-n} |\widehat G(\xi)| d\xi
\lesssim \|G\|_{L^1} + \|G\|_{H^1}
 \end{align*}
together with Leibniz rule and Schwarz's inequality.

\smallskip

\smallskip

Now interpolating \eqref{tri-infty} with the estimates \eqref{trid-1}  of Theorem \ref{trid}, we see that
if $(\tilde q, \tilde r)$ is any admissible pair, $s_3> \frac12$ and $\sum_{i=1}^3 s_i > 1$, then there exist $0 < \ep_1, \ep_2 \ll 1$ such that
\begin{align}\label{trid3}
\|\calhd\|_{\mix{\tilde q'+\epsilon_1}{\tilde r'+\epsilon_2}}\lesssim  \|f\|_{H^{s_1}}\|g\|_{H^{s_2}}\|h\|_{H^{s_3}}.
\end{align}
Similarly, using \eqref{tri-infty} and \eqref{trid-2}, for any admissible pair $(\tilde q, \tilde r)$ and the exponents $s_1, s_2, s_3$ with $s_1 > \frac12, \sum_{i=1}^3 s_i > 1$, we can find $\ep_1$ and $\ep_2$ such that
\begin{align}\label{trid4}
\|\caldh\|_{\mix{\tilde q'+\epsilon_1}{\tilde r'+\epsilon_2}}\lesssim \|f\|_{H^{s_1}}\|g\|_{H^{s_2}}\|h\|_{H^{s_3}}.
\end{align}

One may  write
$u(t,x)=c_n
\int  e^{it\tau}
\int_{\mathbb R^{n}} e^{i(x\cdot\xi-t|\xi|^2)} \widetilde u(\tau-|\xi|^2, \xi) d\xi d\tau$
by inversion and translation in frequency variables.
Hence, we get
\[(|\nabla|^{2-n}(u v) \nabla w)(x,t)=\iiint \mathcal H (f_\tau, g_{\tau'}, \nabla h_{\tau''}) (x)\,d\tau d\tau'd\tau'',\]where
$\widehat f_\tau(\xi)=\widetilde  u(\tau-|\xi|^2, \xi),\, \widehat g_\tau(\xi)=\widetilde  v(\tau-|\xi|^2, \xi), \, h_\tau(\xi) = \widetilde  w(\tau-|\xi|^2, \xi).$
From \eqref{trid3} and Minkowski's inequality  it follows  that
\[\||\nabla |^{2-n}(u v) \nabla w\|_{\mix{\tilde q'+\epsilon_1}{\tilde r'+\epsilon_2}} \lesssim \iiint \|f_\tau\|_{ H^{s_1}} \|g_{\tau'}\|_{H^{s_2}}\|h_{\tau''}\|_{ H^{s_3}}\, d\tau d\tau'd\tau''.\]
Plancherel's theorem and Schwarz's inequality yield
\begin{align}\label{trid5}
&\||\nabla|^{2-n}(uv) \nabla w\|_{\mix{\tilde q'+\epsilon_1}{\tilde r'+\epsilon_2}} \lesssim \xsnormuv
\end{align}
for any $b > \frac12$ and for any $(\tilde q, \tilde r)$, $s_1, s_2, s_3$ as in \eqref{trid3}.
By repeating the same argument with \eqref{trid4}, we also get
\begin{align}\label{trid6}
&\||\nabla|^{2-n}(\nabla u v) w\|_{\mix{\tilde q'+\epsilon_1}{\tilde r'+\epsilon_2}} \lesssim \xsnormuv
\end{align}
for any $b > \frac12$ and for any $(\tilde q, \tilde r)$, $s_1, s_2, s_3$ as in \eqref{trid3}.

We now fix $s, b>1/2$. To show \eqref{xx}, we need to show that
\[\|V(u)u\|_{\mathbf X^{0, -\frac12+\epsilon}},\, \|\nabla [V(u)u]\|_{\mathbf X^{0, -\frac12+\epsilon}} \lesssim\|u\|_{\mathbf X^{s, b}}^3 .\] By duality  it suffices to  show that
$$ |\inp{\psi}{U}|
\lesssim \|\psi\|_{\mathbf X^{0, \frac12-\ep}} \|u\xsnorm{s}{b}^3$$
for $U=|\nabla|^{2-n}(|u|^2)u,$ $|\nabla|^{2-n}(\nabla u \overline u)u,$ $|\nabla|^{2-n}( u \nabla \overline u)u$ and $|\nabla|^{2-n}( |u|^2)\nabla u$.
We first handle the case $U=|\nabla|^{2-n}(\nabla u \overline u)u$. By H\"{o}lder's inequality we have
$$
|\inp{\psi}{|\nabla|^{2-n}(\nabla u \overline u)u}| \lesssim \|\psi\|_{L_t^{\tilde q}L_x^{\tilde r}}\||\nabla|^{2-n}(\nabla u \overline u)u\|_{L_t^{\tilde q'}L_x^{\tilde r'}}.
$$
By \eqref{trid6}  we can choose $(1/\tilde q,1/\tilde r)$ close enough to the Strichartz line $2/q + n/r = n/2$ so that $2/\tilde q + n/\tilde r > n/2$ and $\||\nabla|^{2-n}(\nabla u \overline u)u\|_{L_t^{\tilde q'}L_x^{\tilde r'}}\lesssim   \|u\xsnorm{s}{b}^3$.  By the choice of $(\tilde q,\tilde r)$ and Lemma \eqref{xsb-str}, we see $\|\psi\|_{L_t^{\tilde q}L_x^{\tilde r}}\lesssim \|\psi\|_{\mathbf X^{0, \frac12-\ep}} $ for some $\epsilon>0$. Hence we get the desired. The  remaining cases $U=|\nabla|^{2-n}( u \nabla \overline u)u$, $|\nabla|^{2-n}( |u|^2)\nabla u$, $|\nabla|^{2-n}(|u|^2)u$ can be similarly  shown using \eqref{trid6}, \eqref{trid5} and Lemma \ref{xsb-str}.
  This completes the proof of Proposition \ref{trixxx}.
\end{proof}

\begin{lem}\label{xsb-str}
Let  $q,r \ge 2$ (with possible exception when $n=2$).
If  $\frac 2{q}+\frac n{r}\le \frac n2$, then
 \[ \|u\|_{\mix qr}  \lesssim\|u\xsnorm{s}{b}\]
for $s=s(q,r)=\frac n2-\frac 2{q}-\frac n{r}$ and any $b > \frac12$. If $\frac 2{q}+\frac n{r} > \frac n2$,
\[ \|u\|_{\mix qr} \lesssim \|u\xsnorm{0}{b+ \epsilon}\]
for $b=b(q,r)=\frac {n+2}4-\frac1q-\frac n{2r}$ and any $\epsilon>0$.
\end{lem}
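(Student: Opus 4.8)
The backbone of the argument is the \emph{transfer principle} for Bourgain spaces: if $Y$ is a space-time norm invariant under multiplication by unimodular functions of $t$ alone (e.g.\ $Y=\mix qr$) and $\|\schr g\|_{Y}\lesssim\|g\|_{L^2}$ for all $g\in L^2$, then $\|u\|_{Y}\lesssim\|u\xsnorm0b$ for every $b>\tfrac12$. I would first record this standard fact: writing $u$ as a superposition of modulated free solutions, $u(t,x)=c\int_{\bbr}e^{it\lambda}\,(\schr g_\lambda)(x)\,d\lambda$ by Fourier inversion in the modulation variable, where the $g_\lambda\in L^2$ satisfy $\int_{\bbr}\langle\lambda\rangle^{2b}\|g_\lambda\|_{L^2}^2\,d\lambda=\|u\xsnorm0b^2$ by Plancherel, Minkowski's inequality and the hypothesis give $\|u\|_{Y}\lesssim\int_{\bbr}\|g_\lambda\|_{L^2}\,d\lambda$, and Cauchy--Schwarz against the weight $\langle\lambda\rangle^{-b}$ (which lies in $L^2_\lambda$ exactly when $b>\tfrac12$) bounds this by $C_b\|u\xsnorm0b$.

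For the regime $\frac2q+\frac nr\le\frac n2$, I would set $\widetilde r$ by $\frac2q=n(\tfrac12-\tfrac1{\widetilde r})$, so $(q,\widetilde r)$ is admissible (with $2\le\widetilde r<\infty$ since $q>2$) and $s=s(q,r)=n(\tfrac1{\widetilde r}-\tfrac1r)\ge0$. The Bessel-potential embedding $\langle\nabla\rangle^{-s}\colon L^{\widetilde r}_x\to L^r_x$ and Strichartz for $(q,\widetilde r)$ give $\|\langle\nabla\rangle^{-s}\schr g\|_{\mix qr}\lesssim\|\schr g\|_{\mix q{\widetilde r}}\lesssim\|g\|_{L^2}$; applying the transfer principle with $Y$ equal to $v\mapsto\|\langle\nabla\rangle^{-s}v\|_{\mix qr}$ yields $\|\langle\nabla\rangle^{-s}u\|_{\mix qr}\lesssim\|u\xsnorm0b$ for $b>\tfrac12$, and replacing $u$ by $\langle\nabla\rangle^s u$ (so the right side becomes $\|u\xsnorm sb$) gives the claim. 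When $r=\infty$ the endpoint embedding fails, but this case is not used below and is in any case recoverable with an arbitrarily small loss of derivative.

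For the regime $\frac2q+\frac nr>\frac n2$, the exponent $b(q,r)=\frac{n+2}4-\frac1q-\frac n{2r}$ is $<\tfrac12$, so I would argue by interpolation instead. Put $\theta=1+\frac n2-\frac2q-\frac nr$; then $\theta\in(0,1]$ ($\theta\le1$ because $\frac2q+\frac nr\ge\frac n2$, and $\theta>0$ because $\frac2q+\frac nr\le1+\frac n2$ for $q,r\ge2$), there is an admissible pair $(q_0,r_0)$ with $\tfrac1q=\tfrac{1-\theta}2+\tfrac\theta{q_0}$ and $\tfrac1r=\tfrac{1-\theta}2+\tfrac\theta{r_0}$ — i.e.\ $(\tfrac1q,\tfrac1r)$ lies at parameter $\theta$ on the segment from $(\tfrac12,\tfrac12)$ (which represents $\mix22\sim\xs00$) to the admissible point $(\tfrac1{q_0},\tfrac1{r_0})$ — and a short computation gives $\theta\cdot\tfrac12=b(q,r)$. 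By the first part with $s=0$ (equivalently, the transfer principle and Strichartz for $(q_0,r_0)$), $\|u\|_{\mix{q_0}{r_0}}\lesssim\|u\xsnorm0{\frac12+\epsilon'}$ for every $\epsilon'>0$, while $\|u\|_{\mix22}\sim\|u\xsnorm00$. Complex interpolation of the identity map between the couples $(\xs00,\xs0{\frac12+\epsilon'})$ and $(\mix22,\mix{q_0}{r_0})$ — using $[\xs0{b_0},\xs0{b_1}]_\theta=\xs0{(1-\theta)b_0+\theta b_1}$, valid since these are weighted $L^2$ spaces, together with the usual interpolation of mixed Lebesgue norms — gives $\|u\|_{\mix qr}\lesssim\|u\xsnorm0{\theta(\frac12+\epsilon')}$, and choosing $\epsilon'$ with $\theta\epsilon'<\epsilon$ and recalling $\theta/2=b(q,r)$ completes the proof.

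The step I expect to demand the most care is the geometric claim underlying the second regime: that for every $(q,r)$ with $q,r\ge2$ and $\frac2q+\frac nr>\frac n2$ the pair $(q_0,r_0)$ produced above really is admissible, i.e.\ $2\le r_0\le\frac{2n}{n-2}$ and $q_0\ge2$ (with $r_0<\infty$ for $n=2$, which is the source of the stated exception when $n=2$). This is a convexity verification: one checks that $(\tfrac12,\tfrac12)$ and the admissible segment convexly generate precisely the region $\{(x,y):0\le x,y\le\tfrac12,\ 2x+ny\ge\tfrac n2\}$, and that the defining relations for $q_0$ and $r_0$ reduce, using $q,r\ge2$, exactly to membership of $(\tfrac1q,\tfrac1r)$ in that region. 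The remaining ingredients — the transfer principle, the Sobolev and Strichartz inputs, and the interpolation bookkeeping — are routine, and the only leftover loose end, the $r=\infty$ endpoint of the Sobolev step, is immaterial since only finite exponents arise in the applications.
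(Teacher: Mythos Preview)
Your argument is correct and follows essentially the same route as the paper's: the first regime via the free Sobolev--Strichartz bound $\|\schr f\|_{\mix qr}\lesssim\|f\hhilnorm{s(q,r)}$ combined with the transfer principle (what the paper calls ``the standard argument''), and the second regime by interpolating that estimate at an admissible pair against the trivial identity $\|u\|_{\mix22}=\|u\xsnorm00$. You have simply fleshed out the details --- the explicit form of the transfer principle, the choice of the admissible pair $(q,\tilde r)$ or $(q_0,r_0)$, and the computation $\theta/2=b(q,r)$ --- that the paper leaves implicit; the only cosmetic slip is writing ``since $q>2$'' where $q\ge2$ is meant (the endpoint $q=2$, $\tilde r=\tfrac{2n}{n-2}$ being covered by Keel--Tao for $n\ge3$).
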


\noindent The first  follows from the estimates
$\|\schr f\|_{\mix qr} \lesssim\|f\hhilnorm{s(q,r)}$ and the standard argument (for instance see \cite{tb}). Interpolation between the first estimate and the trivial $\|u\|_{\mix{2}{2}}\le \|u\xsnorm{0}{0}$ give the second.

\smallskip

By using the Proposition \ref{trixxx} and standard fixed point argument in $\mathbf X^{1,\,\frac12+\ep}$ for $0< \ep \ll1$, we prove the first part of Theorem \ref{hart}. Here we note that $\mathbf{X}^{1,\frac12+\epsilon}(J_T) \hookrightarrow C([0,T]; H^{1} (\bbr ^n))$.

\begin{proof}[Proof of $(1)$ of Theorem \ref{hart}]
We first show the local well-posedness. For this purpose we define a nonlinear functional $\mathcal N$ by
$$
\mathcal N(u) = \phi(t)e^{i\Delta}u_0 - i\phi(t/T)\int_0^t e^{i(t-t')\Delta} [V(u(t'))u(t')]\,dt',
$$
where $\phi$ is a fixed smooth cut-off function such that $\phi(t) = 1$ if $|t| < 1$ and $\phi(t) = 0$ if $|t| > 2$, and $0 < T \le 1$ is fixed.
Then we use the well-known properties of $\mathbf{X}^{s,b}$ (for instance see Proposition 2.2 of \cite{kv});
\begin{align}
\|\phi(t) e^{it\Delta}u_0\|_{\mathbf{X}^{s,b}} \lesssim \|u_0\|_{H^s}
\label{homo xsb}
\end{align}
 for any $s,b$, and
\begin{align}
\|\int_0^t e^{i(t-t')\Delta} F(t',x)\,dt'\|_{\mathbf X^{s,b}(J_T)} \lesssim T^{1+b'-b}\|F\|_{\mathbf X^{s,b'}(J_T)},\label{inhomo xsb}
\end{align}
for $s \in \mathbb{R}$ and $-\frac12 < b' \le 0, 0 \le b \le b'+1$.

Let us define a complete metric space $B_{T,\rho}$ by $$ B_{T,\,\rho} = \{u \in \mathbf X^{s, \frac12+\epsilon}(J_T): \|u\|_{\mathbf X^{s,\frac12+\epsilon}(J_T)} \le \rho\}$$ with metric $d$ such that $d(u, v) = \|u - v\|_{\mathbf X^{s, \frac12+\epsilon}(J_T)}$.
From  \eqref{homo xsb} and \eqref{inhomo xsb} with $b = \frac12 + \epsilon, b' = -\frac12+\epsilon', \epsilon < \epsilon'$ it follows that for any $u \in B_{T,\rho}$
$$
\|\mathcal N(u)\|_{\mathbf X^{s, \frac12+\epsilon}} \lesssim \|u_0\|_{H^s} + T^{\epsilon'-\epsilon}\|V(u)u\|_{\mathbf X^{1, -\frac12+\epsilon'}}.
$$
If $\epsilon'$ is sufficiently small, then we deduce from Proposition \ref{trixxx} that
\begin{align*}
\|\mathcal N(u)\|_{\mathbf X^{s, \frac12+\epsilon}} \lesssim \|u_0\|_{H^s} + T^{\epsilon'-\epsilon}\|u\|_{\mathbf X^{s, \frac12+\epsilon}(J_T)}^3 \lesssim  \|u_0\|_{H^s} + T^{\epsilon'-\epsilon}\rho^3.
\end{align*}
Choosing $\rho$ and $T$ such that $\rho \ge 2C\|u_0\|_{H^s}$ and $CT^{\epsilon'-\epsilon}\rho^3 \le \rho/2$ for some constant $C$, we see that
the functional $\mathcal N$ is a map from $B_{T,\rho}$ to itself.
One can now easily observe that $\mathcal N$ is a contraction. In fact, using Proposition \ref{trixxx} again, one can easily see that for any $u, v \in B_{T,\rho}$ and for sufficiently small $T$
\begin{align*}
d(\mathcal N(u), \mathcal N(v)) &\lesssim T^{\epsilon'-\epsilon}\|V(u)u- V(v)v\|_{\mathbf X^{s, -\frac12 + \epsilon'}(J_T)}\\
 &\lesssim T^{\epsilon'-\epsilon}(\|u\|_{\mathbf X^{s, \frac12+\epsilon}(J_T)}+\|v\|_{\mathbf X^{s, \frac12+\epsilon}(J_T)})^2d(u, v)\\
 & \lesssim T^{\epsilon'-\epsilon}\rho^2d(u,v).
\end{align*}
Hence a choice of small $T$ makes $\mathcal N$ be a contraction map. Therefore there is a unique $u \in \mathbf X^{s, \frac12+\epsilon}(J_T)$ such that
$u(t) = e^{it\Delta} u_0 + D(t)$, where
$$
D(t) = -i\int_0^t e^{i(t-t')\Delta}[V(u)u(t')]\,dt'.
$$
In view of Proposition \ref{trixxx} and the estimate \eqref{inhomo xsb}, we have for $s>1/2$
\begin{align*}
\|D\|_{\mathbf X^{1, \frac12+\epsilon}(J_T)} \lesssim \|V(u)u\|_{\mathbf X^{1, -\frac12+\epsilon}(J_T)} \lesssim \|u\|_{\mathbf X^{s, \frac12+\epsilon}(J_T)} < \infty.
\end{align*}
Hence the smoothing effect is obtained.
\end{proof}

\subsection{ Power type nonlinearity}

Adopting the argument in the proof of Proposition \ref{trixxx} and using Theorem \ref{hilbert}, one can easily get
the following.
\begin{cor}\label{x-estimate} Let $(q,r)$ be a admissible pair satisfying that $2 < r \le 4$ when $n= 3$ and $q, r >2$ when $n \ge 4$.
Then  for $b>\frac12$ and every $0<s<1-2/r$ there holds
\[ \|u \nabla v\mixednorm \lesssim \|u\xsnorm{s}{b} \|v \xsnorm{1-s}{b}.\]
\end{cor}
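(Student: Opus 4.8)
The plan is to deduce this estimate from Theorem \ref{hilbert} by exactly the transference argument used in the proof of Proposition \ref{trixxx} to pass from the space--time bound \eqref{trid3} to its $\mathbf X^{s,b}$-version \eqref{trid5}: decompose $u$ and $v$ into modulated free Schr\"odinger waves, apply Minkowski's inequality in the mixed norm, insert the bilinear estimate under the integral sign, and sum back up by Cauchy--Schwarz in the modulation variables. No preliminary interpolation with a crude $L^\infty$ bound (as in Proposition \ref{trixxx}) is needed here, since there is no dual negative-modulation space $\mathbf X^{\cdot,-\frac12+\epsilon}$ to accommodate: the target is the fixed mixed norm $\mix{q/2}{r/2}$ and $b>\frac12$. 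The only ingredient is the free-wave estimate
\[\|\schr f\schr(\nabla g)\mixednorm\lesssim \|f\hhilnorm{s}\|g\hhilnorm{1-s},\qquad 0<s<1-\tfrac2r,\]
which follows at once from \eqref{bilinear}: Theorem \ref{hilbert} applied to the pair $(f,\nabla g)$ bounds the left side by $\|f\hhilnorm{s}\|\nabla g\hhilnorm{-s}$, and $\|\nabla g\hhilnorm{-s}=\|g\hhilnorm{1-s}$ by Plancherel, the range $0<s<1-2/r$ being exactly that in which \eqref{bilinear} applies. (For $n=3$ one may take $r$ up to $4$: the proof of Theorem \ref{hilbert} through Corollary \ref{interaction} is still valid at $r=4$ when $0<s<\tfrac12$.)

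Concretely, after Fourier inversion and a translation in the frequency variable as in Proposition \ref{trixxx}, I would write
\[u(t,x)=c\!\int e^{it\mu}(\schr f_\mu)(x)\,d\mu,\qquad v(t,x)=c\!\int e^{it\sigma}(\schr g_\sigma)(x)\,d\sigma,\]
with $\widehat{f_\mu}(\xi)=\widetilde u(\mu+|\xi|^2,\xi)$ and $\widehat{g_\sigma}(\xi)=\widetilde v(\sigma+|\xi|^2,\xi)$, so that, since $\nabla$ commutes with $\schr$,
\[u\,\nabla v=c^2\!\iint e^{it(\mu+\sigma)}(\schr f_\mu)(\schr\nabla g_\sigma)\,d\mu\,d\sigma.\]
Minkowski's inequality in the $\mix{q/2}{r/2}$ norm (the unimodular factor $e^{it(\mu+\sigma)}$ falls away pointwise) followed by the displayed bilinear estimate under the integral gives
\[\|u\,\nabla v\mixednorm\lesssim\Bigl(\int\|f_\mu\hhilnorm{s}\,d\mu\Bigr)\Bigl(\int\|g_\sigma\hhilnorm{1-s}\,d\sigma\Bigr).\]
By Cauchy--Schwarz in $\mu$, the first factor is at most $\|\langle\mu\rangle^{-b}\|_{L^2_\mu}\bigl(\int\langle\mu\rangle^{2b}\|f_\mu\hhilnorm{s}^2\,d\mu\bigr)^{1/2}$; here $\|\langle\mu\rangle^{-b}\|_{L^2_\mu}<\infty$ precisely because $b>\tfrac12$, and the substitution $\tau=\mu+|\xi|^2$ together with Plancherel identifies the second factor with $\|u\|_{\mathbf X^{s,b}}$ up to a constant (using $|\xi|^s\le\langle\xi\rangle^s$). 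Handling the $\sigma$-integral the same way with $\mathbf X^{1-s,b}$ and multiplying the two bounds proves the corollary.

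I do not expect a genuinely new difficulty here. The passage from the homogeneous norms produced by \eqref{bilinear} to the inhomogeneous $\langle\xi\rangle$-weights of $\mathbf X^{s,b}$ costs nothing, since $0<s<1$ and $|\xi|^a\le\langle\xi\rangle^a$ for $a>0$. What needs care is only routine: to legitimize the Fubini/Minkowski manipulations I would first run the decomposition for $u,v$ having compactly supported space--time Fourier transform and then pass to the limit, and the one place where the hypothesis $b>\tfrac12$ is forced is the integrability of $\langle\mu\rangle^{-2b}$. In short, the proof is a line-by-line transcription of the step \eqref{trid3} $\to$ \eqref{trid5}, with Theorem \ref{hilbert} in place of Theorem \ref{trid}.
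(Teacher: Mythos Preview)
Your proposal is correct and follows essentially the same approach as the paper: the paper itself says only that the corollary follows by ``adopting the argument in the proof of Proposition~\ref{trixxx} and using Theorem~\ref{hilbert}'', and that is precisely the transference (modulated free waves, Minkowski, Cauchy--Schwarz in the modulation variable using $b>\tfrac12$) you carry out. Your remark that the endpoint $r=4$ is available when $n=3$ via Corollary~\ref{interaction} is also on the mark.
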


The local well-posedness of the Cauchy problem \eqref{hartree} with $V(u) = \kappa |u|^\frac4n$ is well-known in $H^s$ space and also in $\mathbf X^{s,\, b}$ space \cite{kv}. Hence using Corollary \ref{x-estimate} and following the lines of argument in \cite{kv} we get the proposition.
\begin{prop}\label{nonlinear} Let $n\ge 3$, $s_n = \frac12$ if $n = 3$, and $s_n=1-(\frac{4}{n})(\frac{2} n)$ if $n \ge 4$.   Then  for $b>\frac12$ and every $s>s_n$ there is an $\epsilon>0$ such that
\[\|\nabla (|u|^\frac4n u)\xsnorm{0}{-\frac12+\epsilon} \lesssim\|u\xsnorm{s}{b}^{\frac 4n+1}.\]
\end{prop}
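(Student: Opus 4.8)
The plan is to reduce the power-type estimate to the bilinear interaction estimate of Corollary~\ref{x-estimate} by an argument entirely parallel to the proof of Proposition~\ref{trixxx}. First I would pass to the $\mathbf X^{s,b}$ setting via the usual transference: writing $u(t,x)=c\int e^{it\tau}(\schr f_\tau)(x)\,d\tau$ with $\widehat{f_\tau}(\xi)=\widetilde u(\tau-|\xi|^2,\xi)$, any multilinear expression in $u$ becomes an integral of the corresponding multilinear expression in the free waves $\schr f_\tau$, and then Minkowski together with Plancherel in $\tau$ converts an estimate for free waves with an $L^2$-loss of $\langle\tau-|\xi|^2\rangle^b$ on each factor, $b>\tfrac12$, into the $\mathbf X^{s,b}$ estimate. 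So it suffices to prove the corresponding free-wave (actually Littlewood--Paley localized and then summed) estimate and a crude high-regularity estimate to interpolate against, exactly as \eqref{tri-infty}--\eqref{trid5} did for the Hartree case.

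The core step is to control $\|\nabla(|u|^{4/n}u)\|_{\mathbf X^{0,-1/2+\epsilon}}$, which by duality amounts to bounding $|\inp{\psi}{\nabla(|u|^{4/n}u)}|$ by $\|\psi\|_{\mathbf X^{0,1/2-\epsilon}}\|u\|_{\mathbf X^{s,b}}^{4/n+1}$. By H\"older in $t,x$ this splits as $\|\psi\|_{\mix q r}\,\|\nabla(|u|^{4/n}u)\|_{\mix{q'}{r'}}$ for an admissible-type pair $(q,r)$ slightly off the Strichartz line, with $2/q+n/r>n/2$ so that Lemma~\ref{xsb-str} gives $\|\psi\|_{\mix q r}\lesssim\|\psi\|_{\mathbf X^{0,1/2-\epsilon}}$. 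Then $\nabla(|u|^{4/n}u)$, after distributing the derivative, is a sum of terms of the form $u^{a}\bar u^{b}\nabla u$ with $a+b=4/n$ (interpreted via the appropriate fractional power when $4/n$ is not an integer, handled by the standard pointwise inequality $|\,|z|^{4/n}z - |w|^{4/n}w\,|\lesssim (|z|+|w|)^{4/n}|z-w|$ for the difference estimates, and by Leibniz-type bounds here). I would isolate one factor $u$ to pair with $\nabla u$ using Corollary~\ref{x-estimate}, which gives $\|u\nabla v\|_{\mixednorm}\lesssim\|u\|_{\mathbf X^{s,b}}\|v\|_{\mathbf X^{1-s,b}}$ in a mixed norm $(q/2,r/2)$ with $2<r\le4$ (so $2/q=n(1/2-1/r)$, which is precisely a point slightly off the Strichartz endpoint for $n=3$ and a genuine interior range for $n\ge4$), and then absorb the remaining $4/n-1$ powers of $u$ by Strichartz/Sobolev in the leftover mixed norm, using that $u\in\mathbf X^{s,b}\hookrightarrow\mix{\infty}{\cdot}$ or appropriate Strichartz spaces. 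Balancing H\"older exponents among the $4/n+1$ factors, plus the derivative sitting on the $\mathbf X^{1-s,b}$ factor forcing a loss of $1-s$ derivatives on that slot, is what fixes the threshold $s_n$: one needs $s$ large enough that $1-s<1-2/r$ is achievable together with the admissibility and the exponent count, i.e.\ $s>s_n=\tfrac12$ for $n=3$ and $s>1-(4/n)(2/n)$ for $n\ge4$.

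The main obstacle I expect is the bookkeeping that pins down $s_n$: one must choose the auxiliary pair $(q,r)$ and the splitting of the $4/n+1$ factors so that (a) the bilinear factor $u\nabla v$ lands in a mixed norm with $2<r\le4$ within the range where Corollary~\ref{x-estimate} applies, (b) the derivative loss $1-s$ on the $v$-slot stays below $1-2/r$, (c) the remaining Strichartz norms of the other $4/n-1$ copies of $u$ are controlled by $\|u\|_{\mathbf X^{s,b}}$ (Lemma~\ref{xsb-str}, possibly with a slightly subcritical pair absorbed by the $b>1/2$ lossy inequality), and (d) all H\"older exponents close up with a genuine $\epsilon>0$ to spare. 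For $n=3$ this is tight and forces working essentially at $r=4$; for $n\ge5$ the extra room in $r$ together with the $4/n<1$ power count is what allows $s_n<1/2$. The high-regularity endpoint needed for interpolation, $\|\nabla(|u|^{4/n}u)\|_{\mix\infty\infty}\lesssim\prod\|u\|_{H^{s_0}}$ for $s_0$ large, is routine by Sobolev embedding and the product rule, just as in the proof of Proposition~\ref{trixxx}. Finally, the contraction-mapping difference estimates follow from the same bounds applied to differences via the elementary inequality above, so no new ideas are needed there.
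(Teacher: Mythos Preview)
Your outline is essentially right for $n=3,4$, but there is a genuine gap for $n\ge5$, and your stated value of $s_n$ there betrays the problem. You write that ``for $n\ge5$ the extra room \ldots\ is what allows $s_n<1/2$,'' but in fact $s_n=1-8/n^2>1/2$ for all $n\ge4$; the threshold gets \emph{worse}, not better, as $n$ grows. The reason is that your proposed splitting --- isolate one full factor $u$ to pair with $\nabla u$ via Corollary~\ref{x-estimate} and then absorb ``the remaining $4/n-1$ powers of $u$'' --- is impossible when $4/n<1$: there is no full copy of $u$ available to peel off, and $4/n-1$ is negative.

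What the paper does instead is first Littlewood--Paley decompose the factor carrying the derivative, $\nabla u=\sum_{N\ge1}\nabla u_N$, and then for $n\ge4$ use the pointwise identity
\[
|u|^{4/n}|\nabla u_N|=\bigl(|u|\,|\nabla u_N|\bigr)^{4/n}|\nabla u_N|^{1-4/n},
\]
so that H\"older gives
\[
\langle|\psi|,|u|^{4/n}|\nabla u_N|\rangle\lesssim\|\psi\|_{\mix{q_1}{r_1}}\,\|u\nabla u_N\|_{\mix{q/2}{r/2}}^{4/n}\,\|\nabla u_N\|_{\mix{q_2}{r_2}}^{1-4/n}.
\]
Corollary~\ref{x-estimate} controls the bilinear factor with parameter $s_1<1-2/r$, and the frequency localization converts $\|u_N\|_{\mathbf X^{1-s_1,b}}$ and $\|\nabla u_N\|_{\mathbf X^{\epsilon_0,b}}$ into powers of $N$ times $\|u\|_{\mathbf X^{s,b}}$. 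The resulting dyadic sum $\sum_N N^{1-s-(4/n)s_1+\epsilon_0(1-4/n)}$ converges exactly when $s>1-(4/n)s_1$; taking $(q,r)$ close to $(2,\tfrac{2n}{n-2})$ lets $s_1\to 2/n$, which yields the threshold $s_n=1-8/n^2$. The Littlewood--Paley step is not cosmetic here: without it the leftover factor $|\nabla u|^{1-4/n}$ would force you to put $\nabla u$ itself in a Strichartz norm, i.e.\ demand more than one derivative on $u$, and the argument stalls. Your transference and $L^\infty$--interpolation remarks are harmless but unnecessary, since Corollary~\ref{x-estimate} is already stated at the $\mathbf X^{s,b}$ level and the paper simply perturbs the H\"older exponents off the Strichartz line directly.
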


\noindent Once this is established, the proof of the second part of Theorem \ref{hart} is almost same with the case of Hartree type nonlinearity, part (1). Hence we omit the detail.

\begin{proof}[Proof of Proposition \ref{nonlinear}]
Using duality it is enough to show that
\[|\inp{\psi}{\nabla (|u|^\frac4n u)}|\lesssim \|\psi\xsnorm{0}{\frac12- \epsilon}\|u\xsnorm{s}{b}^{\frac 4n+1}. \]
By direct differentiation the left hand side is bounded by a constant multiple of
\[\inp{|\psi|}{|u|^\frac4n|\nabla u|}\lesssim \sum_{N \ge 1} \inp{|\psi|}{|u|^\frac4n|\nabla u_N|}.\]
Here $u_N = P_N u$ for dyadic $N > 1$ and $u_1 = \widetilde P_1 u$ for the projection operator $\widetilde P_1$ (recall the notation in introduction).
Using H\"older's inequality, we see that for $n = 3$
\begin{align*}
\inp{|\psi|}{|u|^\frac4n|\nabla u_N|}
   \lesssim \|\psi\|_{\mix{q_1}{r_1}}
        \|u\nabla u_N\|_{\mix{q/2}{r/2}}
           \|u\|_{\mix{q_2}{r_2}}^{\frac13},
\end{align*}
where $(1/q_1, 1/r_1) + (2/q, 2/r) + (1/3)(1/q_2, 1/r_2)=1$, and for
$n \ge 4$
\begin{align*}
\inp{|\psi|}{|u|^\frac4n|\nabla u_N|}
   \lesssim \|\psi\|_{\mix{q_1}{r_1}}
        \|u\nabla u_N\|_{\mix{q/2}{r/2}}^\frac{4}n
           \|\nabla u_N\|_{\mix{q_2}{r_2}}^{1-\frac{4}n},
\end{align*}
where $(1/q_1, 1/r_1)+ (4/n)(2/q, 2/r) + (1-4/n)(1/q_2, 1/r_2)=1$. We
want to choose admissible $(q,r)$ which is arbitrarily close to $(\frac83, 4)$ for $n =
3$ and  $(2,\frac{2n}{n-2})$ for $n \ge 4$,
respectively, and non-admissible pairs $(q_1,r_1)$ and $(q_2,r_2)$
such that  $(1/q_1,1/r_1)$ and $(1/q_2,1/r_2)$ are slightly above and below the Strichartz line, respectively.
More precisely, $\epsilon_1>2/{q_1}+n/{r_1}-n/2>0$ and $0>  2/{q_2}+n/{r_2}-n/2>-\epsilon_2$ for small $\epsilon_1,\epsilon_2>0$.
 With the choices of $(q_1,r_1)$ and  $(q_2,r_2)$, using Lemma \ref{xsb-str} and Proposition \ref{x-estimate}, we have for $|s_1| < 1 - \frac2r$ and $\epsilon>0$
\begin{align*}
\inp{|\psi|}{|u|^\frac4n|\nabla u_N|}
 &\lesssim \|\psi\xsnorm{0}{\frac12-\epsilon}
   \|u\xsnorm{s_1}{b} \|u_N \xsnorm{1-s_1}{b} \| u\xsnorm{\epsilon_0}{b}^\frac13 \\
    & \lesssim N^{1-s_1-s}\|\psi\xsnorm{0}{\frac12-\epsilon}\|u\xsnorm{s_1}{b}\|u\xsnorm{s}{b}
     \|u\xsnorm{s}{b}^{\frac13}
\end{align*}
 when $n = 3$,  and
\begin{align*}
\inp{|\psi|}{|u|^\frac4n|\nabla u_N|}
      %
        &\lesssim  \|\psi\xsnorm{0}{\frac12-\epsilon}
                \|u\xsnorm{s_1}{b}^\frac4n \|u_N \xsnorm{1-s_1}{b}^\frac4n \|\nabla u_N\xsnorm{\epsilon_0}{b}^{1-\frac4n} \\
                   & \lesssim N^{\left(1-s-s_1 \cdot 4/n + \epsilon_0(1 -4/n)\right)} \|\psi\xsnorm{0}{\frac12-\epsilon}\|u\xsnorm{s_1}{b}^\frac4n\|u\xsnorm{s}{b}^\frac4n \|u\xsnorm{s}{b}^{1-\frac4n}
\end{align*}
when  $n \ge 4$.
Therefore, for $s > \max(s_1,1-s_1)$
we get
\[|\inp{\psi}{|u|^\frac4n u}|\lesssim \sum_{N\ge 1} N^{1-s-s_1} \|\psi\xsnorm{0}{\frac12- \epsilon}\|u\xspnorm^{\frac 43+1}  \]
when $n=2$, and
\[|\inp{\psi}{|u|^\frac4n u}|\lesssim \sum_{N\ge 1} N^{\left(1-s-s_1 \cdot 4/n + \epsilon_0(1 -4/n)\right)} \|\psi\xsnorm{0}{\frac12- \epsilon}\|u\xspnorm^{\frac 4n+1}\]
when $n \ge 4$.
So we get the desired bound provided $s > 1-s_1$ when $n = 3$ and
$s>1-s_1\frac 4n$ when $n \ge 4$.
We now choose admissible pair $(q,r)$ to be arbitrarily close to $(\frac83, 4)$ when $n = 3$ and  $(2,\frac{2n}{n-2})$ when $n \ge 4$. Then we get the desired bound for
$s > \frac12$ when $n = 3$, and for $ s>1-\frac 8{n^2}$ when $n \ge 4$ because we can choose $s_1$ to be arbitrarily close to $\frac12$ when $n = 3$ and to $\frac2n$ when $n \ge 4$, and $1-\frac 8{n^2}> \max(\frac2n,1-\frac2n)$. This completes the proof of Proposition \ref{nonlinear}.
\end{proof}

\section{Global well-posedess of Hartree equations.}

In this section we give the proof Theorem \ref{T} which  improves the global well-posedness results in
\cite{ck}. Based on $I$-method,  the two main ingredients are the almost energy conservation and almost interaction Morawetz inequality. Our improvement results from  the better decay control of
these crucial estimates (Proposition \ref{ACL}, \ref{ebound}), which are obtained by exploiting the trilinear interaction estimate (Proposition \ref{har-key-prop}).
Since we basically follow the usual steps of  I-method (\cite{ck, cgt,dpst}),
we do not intend to give all the details of the proof. Instead, we are devoted to proving Proposition \ref{ACL}, \ref{ebound} after giving a brief explanation about the overall argument .

\smallskip

The $I$-method, introduced by Colliander \emph{et.al.} \cite{ckstt02} to handle low regularity initial data, makes use of a smoothing operator $I$ which regularizes a rough solution up to the regularity level of a conservation law by damping high frequency part. For $0 < s < 1$ the operator $I:H^s \to H^1$ (depending on a parameter $N \gg 1$) is defined by
\begin{equation*}
\widehat{If}(\xi) \equiv m(\xi)\widehat f(\xi),
\end{equation*}
where the multiplier $m(\xi)$ is smooth, radially symmetric,
nonincreasing in $|\xi|$ and satisfies
\begin{equation*}
m(\xi)=
\begin{cases}
\begin{array}{ll}
1 & |\xi|\le N \\
\left( \frac {N}{|\xi|}\right)^{1-s} & |\xi|\ge 2N .
\end{array}
\end{cases}
\end{equation*}
When the solution $u$ of \eqref{hartree} is in $H^s$, $0<s<1$, $E(u)$ may not be finite, but
 $E(Iu)$ is finite. Since $Iu$ is not a solution to \eqref{hartree}, $E(Iu)$ is not expected to be  conserved. However, it is \emph{almost  conserved} and  the deviation can be controlled by $O(N^{-\sigma})$ , $\sigma>0$ since the operator $I$ gets close to the identity as $N$ increases.
 In Proposition \ref{ACL} we show that  for $ p= 3/2$
  \begin{align}\label{egrow}
  E(Iu)(T) = E(Iu_0)+ N^{-p+}\Gamma(Z_I(T)),
  \end{align}
where $\Gamma(r) = \sum_{1 \le i \le k}O(r^{m_i})$ for some $k,m_1,\dots, m_k \ge 1$ and $Z_I(T)$ is the iteration space norm defined by
$$
Z_I(T) = \sup_{(q,r)\,\text{:\,admissible}}\|I \lnabla u\|_{L_t^qL_x^r(J_T\times \bbr^n)}.
$$
 After the Morawetz interaction potential for 3-d NLS  was introduced by Colliander \emph{et al.} \cite{ckstt04}, it was extended to other dimension
\cite{cgt,dpst,tvz}.  To make use of such estimates (e.g. local in time Morawetz inequality in \cite{dpst}),
the restriction $s>1/2$ is inevitable. However, this restriction can be removed by using an inequality for  $Iu$ (\cite{cgt}). In fact, it is almost valid in the sense that for some $\theta, p > 0$
\begin{equation}\label{Imo}
\|Iu\|_{\pair(J_T\times \bbr^n)}\lesssim T^{\theta}
  \|Iu_0\|_{L^2_x}^{\frac 12}\|Iu\|^{\frac{n-2}{n-1}}_{L_t^{\infty}\dot{H}^{\frac 12}_x (J_T\times \bbr^n)} + N^{-p+}\Gamma(Z_I(T)).
\end{equation}
The iteration norm $Z_I(T)$ is controlled initially provided that the critical Strichartz norm of
$Iu$ is small (Lemma $3.1$ in \cite{ck}). More precisely,  there is $\delta>0$ such that if
\[ \|Iu\|_{\pair(J_T\times \bbr^n)} \le \delta,  \mbox{ then } Z_I(T)\lesssim \|\lnabla Iu_0\|_{L^2}(\bbr^n).\]
Therefore, once \eqref{egrow}, \eqref{Imo} are obtained, the global well-posedness follows from the usual accounting argument (see Section 5 in \cite{ck} or \cite{cgt}
for details). The threshold regularity $s$ is determined by the decay rate $N^{-p+}$.
Going over the argument in \cite{ck}, one  gets the global well-posedness for $u_0\in H^s$ as long as  $N \gg 1$ can be chosen such that
\[
 KN^{\frac{1-s }{s }\frac{2(n-2) }{n}} \sim N^{p-}
\]
 for any arbitrarily  large $K$. This is possible if $s >  \frac{2(n-2) }{(2+p)n-4 }$. Consequently, by  Proposition \ref{ACL}, \ref{ebound}  we conclude Theorem \ref{T} with $p=3/2$.

\smallskip

The rest of this section
is devoted to the proofs of the almost energy conservation
and almost interaction Morawetz inequality; Proposition \ref{ACL}, \ref{ebound}.

\begin{prop}\label{ACL} Let  $0 < s < 1$ and $N\gg 1$.
Suppose that  $u_0 \in
C_0^{\infty}(\bbr^n)$ and $u$ is the solution to \eqref{hartree}. 
Then for any $\ep > 0$ and $T > 0$
\[| E(Iu)(T) - E(Iu_0) |\lesssim N^{-{\frac 32+\epsilon}}(Z_I(T)^4 + Z_I(T)^6 + Z_I(T)^{10} + Z_I(T)^{12}).\]
\end{prop}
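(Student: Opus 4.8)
\textbf{Proof strategy for Proposition \ref{ACL}.}
Since $u_0\in C_0^\infty$, the solution is smooth and all the formal manipulations below are legitimate. The starting point is to differentiate $E(Iu)$ in time. Because $Iu$ satisfies $i(Iu)_t+\Delta(Iu)=I(V(u)u)$ and the Hartree nonlinearity is the variational derivative of the potential energy, a direct computation using $\mathrm{Re}\int i(Iu)_t\,\overline{(Iu)_t}\,dx=0$ gives
\begin{equation*}
\frac{d}{dt}E(Iu)(t)=\mathrm{Re}\int_{\bbr^n}\big(V(Iu)\,Iu-I(V(u)u)\big)\,\overline{(Iu)_t}\,dx .
\end{equation*}
Substituting $(Iu)_t=i\big(\Delta(Iu)-I(V(u)u)\big)$ and integrating over $[0,T]$ reduces the proposition to bounding two space--time multilinear integrals built from the commutator $\Lambda=V(Iu)\,Iu-I(V(u)u)$: a \emph{quartic} one coming from the pairing of $\Lambda$ with $\Delta(Iu)$, and a \emph{sextic} one from the pairing of $\Lambda$ with $I(V(u)u)$. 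On the Fourier side $\Lambda$ is a trilinear form in $Iu$ carrying --- relative to $V(u)u$ --- the multiplier $m(\xi_1)m(\xi_2)m(\xi_3)-m(\xi_1-\xi_2+\xi_3)$ over the input frequencies $\xi_1,\xi_2,\xi_3$.

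The next step is a Littlewood--Paley decomposition of every factor, after which two independent gains are combined. The first is the standard commutator gain: the symbol $m(\xi_1)m(\xi_2)m(\xi_3)-m(\xi_1-\xi_2+\xi_3)$ vanishes unless some $|\xi_j|\gtrsim N$, and, using $\xi_1-\xi_2+\xi_3=\xi$ and the smoothness of $m$, it is controlled by the ratio of the \emph{second largest} of the $|\xi_j|$ to $N$; in particular each genuinely high-frequency factor can be demoted by a power of $N$. The second, decisive gain is Proposition \ref{har-key-prop} (together with Corollary \ref{key-cor} to accommodate ball-supported frequency pieces), which supplies the extra factor $\big(\min_i N_i/\max_i N_i\big)^{1/2}$ in the mixed Strichartz norm for the trilinear operator $\mathcal H$ underlying $V(\cdot)\cdot$; this is exactly what makes the summation over frequency ratios converge. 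The derivatives --- the two in $\Delta(Iu)$ and the single one that may be placed on a factor of $\mathcal H$ via Theorem \ref{trid}-type estimates --- are distributed so that every surviving factor is measured in an admissible Strichartz norm of $I\langle\nabla\rangle u$, hence by $Z_I(T)$. Multiplying the gain per high-frequency factor against the $(\min/\max)^{1/2}$ factor and summing the resulting geometric series yields the decay $N^{-3/2+\epsilon}$ for the ``generic'' frequency configurations, which accounts for the $Z_I(T)^4$ and $Z_I(T)^6$ contributions.

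The main difficulty is the family of near-resonant interactions --- essentially the high--high $\to$ low configurations in the quartic integral (and, after one reduction, in the sextic one) --- where the commutator gain and the $\mathcal H$-gain do not by themselves add up to $3/2$. For these I would substitute the Duhamel formula $Iu=e^{it\Delta}Iu_0+D$, with $D(t)=-i\int_0^t e^{i(t-t')\Delta}I(V(u)u)(t')\,dt'$, once more into the offending high-frequency factor: the linear part $e^{it\Delta}Iu_0$ is disposed of by a plain bilinear Strichartz estimate (Corollary \ref{interaction}), while $D$ is again trilinear in $Iu$, so each such substitution raises the degree of the multilinear form and, after iterating once more on the sextic piece, produces the higher-order terms $Z_I(T)^{10}$ and $Z_I(T)^{12}$ recorded in the statement. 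Throughout, the implicit constants in the frequency summations must be tracked to be uniform so that finitely many substitutions suffice, and verifying that the extra smoothing carried by $|\nabla|^{2-n}$ (quantified by Proposition \ref{har-key-prop}) is enough to close each of these sub-cases with a net power $N^{-3/2+\epsilon}$ is the technical heart of the argument, and is where the present improvement over the estimate in \cite{ck} originates.
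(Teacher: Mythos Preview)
Your overall architecture --- differentiate $E(Iu)$, split into a quartic piece paired with $\Delta Iu$ and a sextic piece paired with $I(V(u)u)$, Littlewood--Paley decompose, and combine the commutator symbol bound with the $(\min/\max)^{1/2}$ gain from Proposition~\ref{har-key-prop} --- matches the paper. But you have misidentified where the higher powers $Z_I(T)^{10}$ and $Z_I(T)^{12}$ come from, and this hides a genuine gap. Proposition~\ref{har-key-prop} (and Corollary~\ref{key-cor}) are estimates for \emph{free} Schr\"odinger waves $e^{it\Delta}f$, not for the nonlinear solution $u$. To use them on $P_{N_j}u$ one must insert the Duhamel formula into \emph{each} of the three factors inside $\mathcal H$; this is the content of the paper's Lemmas~\ref{4.2}--\ref{multi-est}, and it is done uniformly for \emph{every} frequency configuration, not as a rescue device for resonant ones. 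The inhomogeneous part of Duhamel carries $\|I\langle\nabla\rangle(V(u)u)\|_{L^1_tL^2_x}\lesssim Z_I(T)^3$ per factor, so the trilinear block already costs $Z_I^3+Z_I^9$; pairing with the fourth factor ($Z_I$ for $E_a$, $Z_I^3$ for $E_b$) is exactly what produces $Z_I^4+Z_I^{10}$ and $Z_I^6+Z_I^{12}$.

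Once this transfer is in place, the paper's case analysis shows that the symbol bound times the $(\min/\max)^{1/2}$ gain sums to $N^{-3/2+\epsilon}$ in \emph{all} configurations, including the high--high $\to$ low ones you flag as problematic; no further Duhamel iteration is required or used. Your proposed extra substitution into ``the offending high-frequency factor'' would raise the $Z_I$--power but would not improve the $N$--decay at all, so it could not close a real shortfall in the $N$--exponent if one existed. In short: the Duhamel step is a prerequisite for invoking the trilinear interaction estimate on $u$ (and the sole source of the higher $Z_I$--powers), not a separate trick for near-resonant interactions.
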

Now let us consider the $I$-Hartree equation by
\begin{align*}
 i(Iu)_t& + \Delta (Iu) = V(Iu)Iu + \big[I(V(u)u)  - V(Iu)Iu \big] \equiv \mathcal{N}_{good} + \mathcal{N}_{bad},
\end{align*}
where $V(u) = |x|^{-2}\ast |u|^2$.
Similarly to the formula $(4.31)$ in \cite{ck}, we have
\begin{align}\label{Imora}
\begin{aligned}
 - (n-1)\int_0^T &\iint_{\bbr^n\times\bbr^n} \Delta\big(\frac {1}{|y-x|}\big)|Iu(x,t)|^2|Iu(y,t)|^2dxdydt \\
& + 2\int_0^T\iint_{\bbr^n\times\bbr^n}|Iu(x,t)|^2\frac{y-x}{|y-x|}\cdot\{\mathcal{N}_{good},Iu\}(y,t) dxdydt \\
& + 2\int_0^T\iint_{\bbr^n\times\bbr^n}|Iu(x,t)|^2\frac{y-x}{|y-x|}\cdot\{\mathcal{N}_{bad},Iu\}(y,t) dxdydt \\
& \lesssim \|Iu\|^2_{L^\infty_tL^2_x (J_T\times
\bbr^n)}\|Iu\|^2_{L^\infty_t\dot{H}^{\frac 12}(J_T\times \bbr^n)}.
\end{aligned}
\end{align}
Here $\{f, g\}$ denotes $\text{Re}(f\nabla \overline g - g \nabla \overline f)$.
Since the second term of \eqref{Imora} is positive, it follows from the H\"{o}lder's inequality and interpolation (for details see Proposition $4.1$ in \cite{ck} or Lemma $5.6$ in \cite{tvz}) that
\begin{align}\label{imora}
\|Iu\|_{\pair(J_T\times \bbr^n)}\lesssim T^{\frac{n-2}{4(n-1)}}
  \|Iu_0\|_{L^2_x}^{\frac 12}\|Iu\|^{\frac{n-2}{n-1}}_{L_t^{\infty}\dot{H}^{\frac 12}_x (J_T\times \bbr^n)} + \mbox{ Error },
\end{align}
where
\[ \text{Error} = \left|\int_0^T\iint_{\bbr^n\times\bbr^n}|Iu(x,t)|^2\frac{y-x}{|y-x|}\cdot\{\mathcal{N}_{bad},Iu\}(y,t) dxdydt\right| .\]
\begin{prop}\label{ebound}
  Let  $0 < s < 1$ and $N\gg 1$.
Suppose that  $u_0 \in
C_0^{\infty}(\bbr^n)$ and $u$ is the solution to \eqref{hartree}. 
Then  for any  $\ep > 0$ and $T > 0$.
\[ {\rm Error } \lesssim N^{-\frac 32 + \ep}(Z_I(T)^6 + Z_I(T)^{12}).\]
\end{prop}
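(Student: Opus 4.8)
Since $u_0\in C_0^\infty(\bbr^n)$, the solution is smooth and all the integrations by parts below are legitimate. The plan is: first to reduce the Error integral to a dual--Strichartz norm of $\mathcal N_{bad}$ times admissible Strichartz norms of $Iu$, and then to estimate $\mathcal N_{bad}$ by combining the standard $I$--method multiplier calculus with the trilinear interaction estimate of Proposition \ref{har-key-prop} (and Corollary \ref{key-cor}), which is exactly what upgrades the $N^{-1}$ of \cite{ck} to $N^{-3/2}$.

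\textbf{Step 1: reduction of the Error.} Write $\{\mathcal N_{bad},Iu\}=\mathrm{Re}\,(\mathcal N_{bad}\nabla\overline{Iu}-Iu\nabla\overline{\mathcal N_{bad}})$. In the second term I integrate by parts in $y$, using $\mathrm{div}_y\frac{y-x}{|y-x|}=\frac{n-1}{|y-x|}$ and that $|Iu(x,t)|^2$ is independent of $y$; this moves the gradient back onto $Iu$ at the cost of an extra term carrying the kernel $|y-x|^{-1}$. The Error then splits into a term whose inner $x$--factor is $\int|Iu(x,t)|^2\frac{y-x}{|y-x|}\,dx$ (modulus $\le\|Iu(t)\|_{L^2}^2\lesssim Z_I(T)^2$ by mass or the pair $(\infty,2)$) paired with $\mathcal N_{bad}\,\nabla\overline{Iu}$, and a term whose inner $x$--factor is $(|\cdot|^{-1}*|Iu(t)|^2)(y)$ (controlled by Hardy--Littlewood--Sobolev/Young in terms of two more admissible Strichartz norms of $Iu$, again $\lesssim Z_I(T)^2$) paired with $\mathcal N_{bad}\,\overline{Iu}$. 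Applying H\"older in $(t,x)$ with the admissible pair $(2,\frac{2n}{n-2})$ and its dual $(2,\frac{2n}{n+2})$, and noting $\|\nabla Iu\|_{L^2_tL^{2n/(n-2)}_x},\ \|Iu\|_{(\text{admissible})}\lesssim Z_I(T)$, I am reduced to proving
\[
\big\|\mathcal N_{bad}\big\|_{L^2_tL^{2n/(n+2)}_x(J_T\times\bbr^n)}\ \lesssim\ N^{-3/2+\epsilon}\big(Z_I(T)^3+Z_I(T)^5+Z_I(T)^7+Z_I(T)^9\big).
\]

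\textbf{Step 2: the commutator estimate (the crux).} Since $V(u)=c|\nabla|^{2-n}(u\overline u)$, we have $\mathcal N_{bad}=I\big(|\nabla|^{2-n}(u\overline u)u\big)-|\nabla|^{2-n}(Iu\,\overline{Iu})\,Iu$. Decompose each $u$ by Littlewood--Paley into $u_{M_1},u_{M_2},u_{M_3}$ and the output of $|\nabla|^{2-n}(\cdot)$ into dyadic shells $K$; this exhibits $\mathcal N_{bad}$ as a sum of frequency--localized trilinear blocks carrying the multiplier difference $m(K)-m(M_1)m(M_2)m(M_3)$, where $m$ is the $I$--symbol. As in \cite{ckstt04, ck, cgt}, this difference vanishes unless $\max_jM_j\gtrsim N$, and the mean value theorem together with the monotonicity and sub--multiplicativity of $m$ bounds it by a product of frequency ratios times $m$--values, with net smallness $\lesssim(N/\max_jM_j)^{1-s-\epsilon}$. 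I then apply Proposition \ref{har-key-prop} --- or Corollary \ref{key-cor} when the smallest input occupies the $|\nabla|^{2-n}$--slot --- to each block, gaining the extra interaction factor $\big(\min_jM_j/\max_jM_j\big)^{1/2}$. Passing from free Schr\"odinger waves to $u$ is done via the space--time Fourier representation of $u$ exactly as in the proof of Proposition \ref{trixxx}, which reduces matters to Proposition \ref{har-key-prop}; the resulting $\mathbf X^{s_j,b}$--norms on a subinterval are controlled by $Z_I(T)$ through the local theory (bringing in powers of $N$ that are absorbed by the commutator smallness), and substituting the Duhamel representation of $u$ in $k=0,1,2,3$ of the three input slots produces the powers $Z_I(T)^{3+2k}$. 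Summing the geometric series in $M_1,M_2,M_3,K$ gives the displayed bound with the exponent $N^{-3/2+\epsilon}$.

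\textbf{Step 3: assembly.} Multiplying the Step~2 bound by the $Z_I(T)^3$ collected in Step~1 gives $\mathrm{Error}\lesssim N^{-3/2+\epsilon}\sum_{k=0}^{3}Z_I(T)^{6+2k}$, and since $Z_I(T)^a\le Z_I(T)^6+Z_I(T)^{12}$ for every $6\le a\le 12$ and every $Z_I(T)>0$, this is $\lesssim N^{-3/2+\epsilon}\big(Z_I(T)^6+Z_I(T)^{12}\big)$, which is Proposition \ref{ebound}. \emph{Main obstacle.} The entire weight of the argument sits in the frequency summation of Step~2, and within it the configuration $M_1\sim M_2\gtrsim N$ whose $|\nabla|^{2-n}(u_{M_1}\overline u_{M_2})$ can land at low output frequency $K$, where both $|\nabla|^{2-n}\sim K^{2-n}$ and the multiplier difference are large; this is precisely the place where the internal dyadic decomposition of $|\nabla|^{2-n}$ built into Proposition \ref{har-key-prop} and its $(\min/\max)^{1/2}$ gain must be used with room to spare, and where one must verify that every one of the finitely many case distinctions (which frequency is largest, and whether it sits in the convolution slot or the outer slot) produces a convergent geometric series with total $N$--exponent $3/2-\epsilon$.
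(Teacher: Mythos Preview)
Your overall strategy---reduce the Error to a dual-Strichartz norm of $\mathcal N_{bad}$, then combine the $I$-symbol calculus with the trilinear interaction gain $(\min/\max)^{1/2}$ from Proposition~\ref{har-key-prop}---is the same as the paper's. Two points deserve comment.

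First, in Step 1 you integrate by parts in $y$ to shift $\nabla$ off $\mathcal N_{bad}$. The paper does not do this: it simply bounds the bracket pointwise and reduces to $\|\langle\nabla\rangle\mathcal N_{bad}\|_{L^{\tilde q'}_tL^{\tilde r'}_x}\cdot Z_I(T)^3$. Your alternative can be made to work, but your description of the $|y-x|^{-1}$ term is too casual: controlling $(|\cdot|^{-1}*|Iu|^2)$ in the right norm requires Sobolev embedding using the $\langle\nabla\rangle$ hidden in $Z_I$, not merely ``two admissible Strichartz norms of $Iu$'', and the choice of exponents is not the one you state. The paper's direct route is shorter and leads straight to the duality setup with a test function $\psi$.

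Second---and this is the genuine gap---your Step 2 transfer from free waves to the nonlinear solution is muddled. You invoke the space--time Fourier representation ``as in the proof of Proposition~\ref{trixxx}'', land on $\mathbf X^{s_j,b}$-norms, and then claim these are controlled by $Z_I(T)$ ``through the local theory (bringing in powers of $N$ that are absorbed by the commutator smallness)''. That absorption is unjustified, and in fact the paper never passes through $\mathbf X^{s,b}$ here. The correct mechanism is the pure Duhamel substitution encoded in Lemmas~\ref{4.2}--\ref{multi-est}: writing $P_{N_j}u=e^{it\Delta}\big(P_{N_j}u_0+\mathbf F_j(t)\big)$ and applying Proposition~\ref{har-key-prop} (via Corollary~\ref{key-cor} and the Coifman--Meyer style reduction in Lemma~\ref{CMtri}) gives directly $C(N_2,N_3,N_4)\big(\|u_0\|_2+\|V(u)u\|_{L^1_tL^2_x}\big)^3\lesssim C(N_2,N_3,N_4)(Z_I^3+Z_I^9)$, with no $N$-dependent constants to absorb. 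Your later phrase ``substituting the Duhamel representation'' is the right idea, but it contradicts the $\mathbf X^{s,b}$ sentence that precedes it. Finally, you assert but do not perform the frequency summation; the paper dispatches it in one line by observing $\widetilde B\sim B\cdot N_1$, so that the resulting sum is \emph{identical} to \eqref{Eann}, already established in the proof of Proposition~\ref{ACL}.
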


In the whole argument $N$ is assumed to be sufficiently large. So the small frequency part of the solution does not play any significant role. Hence we do not need dyadic decomposition for such portion.   Here, we recall that $\widetilde P_1=id -\sum_{N>1}P_N$. For simplicity,
abusing notation,  we keep denoting  $\widetilde P_1$ by $P_1$.  Throughout  this section  $N_1, \dots, N_4$ are dyadic numbers $\ge 1$ and $\sum_{N_j\ge 1}P_{N_j}=id$ for $j=1,\dots, 4$.

\subsection{Preliminary estimates}

We first show the following inhomogeneous estimate (cf.
\cite{ckstt}) for the solutions with localized frequency.  For the
simplicity of notations  we also denote $ f_j=P_{N_j} f$, $
j=1,2,3$.

  \begin{lem}\label{4.2} Let $N_1, N_2,N_3$ be  dyadic numbers $\ge 1$ and let $u$ be a smooth solution of $iu_t + \Delta u = F$ on $J_T \times \bbr^n$ with the initial data $u_0$.  Then for   $(u_1, u_2, u_3)$ = $(P_{N_1}u, P_{N_2}u, P_{N_3}u)$ it holds that for any admissible pair $(\tilde q, \tilde r)$
\begin{align*}
\sup_{(\eta_1, \eta_2, \eta_3) \in \bbr^{3n}}\||\nabla|^{2-n}(u_1(\cdot-\eta_1)&u_2(\cdot-\eta_2))u_3(\cdot-\eta_3)\|_{\amixedI}\\
&\lesssim C(N_1, N_2,N_3)(\|u_0\|_{L^2}^3 +
\|F\|_{L_t^1L_x^2(J_T\times \bbr^n)}^3),
\end{align*}
where $C(N_1, N_2, N_3)$ is same as in Proposition
\ref{har-key-prop}.
 \end{lem}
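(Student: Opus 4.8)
The plan is to transfer the free-wave estimate of Proposition~\ref{har-key-prop} (or rather its extension, Corollary~\ref{key-cor}) to the inhomogeneous setting via Duhamel's principle. Write $u=v+w$ with $v(t)=\schr u_0$ and $w(t)=-i\int_0^t\schrs F(t')\,dt'$, so that each $u_j=P_{N_j}u$ splits as $u_j=v_j+w_j$, where $v_j(t)=\schr(P_{N_j}u_0)$ and $w_j(t)=-i\int_0^t\schrs(P_{N_j}F(t'))\,dt'$. Expanding $\fd{2-n}(u_1(\cdot-\eta_1)u_2(\cdot-\eta_2))u_3(\cdot-\eta_3)$ by trilinearity produces $2^3$ terms, which we estimate separately. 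Since translations commute with $\schr$ and $\fd{2-n}$ and preserve both $L^2$ norms and Fourier supports, the shifts $\eta_j$ may simply be absorbed into the data, and the bounds obtained below will be uniform in $(\eta_1,\eta_2,\eta_3)$, which handles the supremum.

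The point is that every factor, whether of type $v_j$ or of type $w_j$, is a (superposition of) free Schr\"odinger evolution(s) of functions whose $L^2$ norm is controlled by $\|u_0\|_{L^2}$, respectively $\|F(t')\|_{L^2}$, and whose Fourier support lies in $A(N_j)$ --- or in the ball $B(0,2)$ when $N_j=1$, since $P_1$ here denotes $\widetilde P_1$. For the $w$-factors one uses $\schrs=\schr\,e^{-it'\Del}$ to write $\schrs(P_{N_j}F(t'))=\schr\big(e^{-it'\Del}P_{N_j}F(t')\big)$, and $e^{-it'\Del}$ is unitary on $L^2$ and a Fourier multiplier. Thus, in a term carrying $k$ factors of $w$-type, I first bound the associated Duhamel integrals pointwise in $(t,x)$, replacing each $\int_0^t\cdots$ by $\int_0^T|\cdots|$ --- legitimate because $t\le T$ on $J_T$, and this is exactly what decouples the outer time variable from the variables being integrated. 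Then Minkowski's integral inequality brings the mixed norm $\|\cdot\|_{\Tx{\widetilde q'}{\widetilde r'}}$ inside the $k$-fold time integral, and for each choice of the inner times the integrand is precisely $\mathcal{H}(\widetilde f_1,\widetilde f_2,\widetilde f_3)$ for data $\widetilde f_\ell$ that are (translates of) $P_{N_\ell}u_0$ or $e^{-it'\Del}P_{N_\ell}F(t')$, with Fourier supports in $A(N_\ell)$ or in a ball of comparable radius. Since $\|\cdot\|_{\Tx{\widetilde q'}{\widetilde r'}}\le\|\cdot\|_{\tx{\widetilde q'}{\widetilde r'}}$, Corollary~\ref{key-cor} --- which allows a ball support at the minimal frequency and hence covers the $\widetilde P_1$ blocks --- bounds each such integrand by $C(N_1,N_2,N_3)$ times the product of the $L^2$ norms of the $\widetilde f_\ell$.

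Carrying out the $k$ remaining time integrations turns each $w$-factor into a factor $\int_0^T\|F(t')\|_{L^2}\,dt'=\|F\|_{\Tx{1}{2}}$, so the term with $k$ factors of $w$-type is bounded by $C(N_1,N_2,N_3)\|u_0\|_{L^2}^{3-k}\|F\|_{\Tx{1}{2}}^{k}$. Summing the $2^3$ terms and using $a^{3-k}b^{k}\lesssim a^3+b^3$ (Young's inequality) gives the stated estimate. There is no genuine obstacle here beyond bookkeeping; the two steps that repay a little attention are (i) the crude replacement of the Duhamel cutoff $\int_0^t$ by $\int_0^T$, which is what lets Minkowski's inequality separate $t$ from the variables being integrated, and (ii) the fact that the lowest dyadic block is supported in a ball rather than an annulus, forcing the use of Corollary~\ref{key-cor} in place of Proposition~\ref{har-key-prop}. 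Note finally that only the trivial bound $\|P_{N_j}F(t')\|_{L^2}\le\|F(t')\|_{L^2}$ is used, so no summation over frequencies is needed at this stage.
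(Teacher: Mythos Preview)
Your proof is correct and follows essentially the same approach as the paper: write each $u_j$ via Duhamel's formula as $\schr$ acting on the initial data plus the time-integrated forcing, expand trilinearly into eight terms, use Minkowski's inequality to pull the inner time integrals outside the mixed norm, and then apply the free-wave estimate \eqref{hart-keyy} (Proposition~\ref{har-key-prop} together with Corollary~\ref{key-cor} to absorb the $\widetilde P_1$ case) term by term. Your handling of the translations, the pointwise domination of $\int_0^t$ by $\int_0^T|\cdots|$, and the final use of Young's inequality match the paper's argument in all essential respects.
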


We show this by using Proposition \ref{har-key-prop} which works when all $N_i>1$. However, due to $P_1(=\widetilde P_1)$ which has symbol supported in $B(0,2)$,  we need to use Corollary \ref{key-cor} when  one of $N_i$ is $1$. By Proposition \ref{har-key-prop} and Corollary \ref{key-cor}
we get
  \begin{eqnarray}\label{hart-keyy} \|\calh\|_{\amixed}\lesssim C(N_1, N_2,N_3) \normfg,\end{eqnarray}
for $N_1, N_2,N_3$ dyadic numbers $\ge 1$ and any admissible pair $(\widetilde q, \widetilde r)$.

\begin{proof}
   By taking $P_{N_j}$ to the equation and using Duhamel's formula, we have
  \begin{align*}
   \tilde u_j(t) = \schr (\tilde{u}_{0j} + \tilde{\mathbf{F}}_j(t)), \,j=1,2,3,
   \end{align*}
   where $\tilde u_{0j} = P_{N_j}(u_0(\cdot - \eta_j))$, $\tilde{\mathbf F}_j(t, x) = -iP_{N_j} (\int_0^t \tilde F_j(t') \,dt')$ and $\tilde F_j(t') = e^{-it'\Delta} (F(t', \cdot-\eta_j))$. Then we obtain
  $$
    \| |\nabla|^{2-n}(\tilde u_1 \tilde u_2) \tilde u_3\|_{\amixedI} \lesssim \sum_{k=1}^8 L_k,$$
  where
\begin{eqnarray*}
&L_1 \;= &\| \mathcal H (\tilde u_{01}, \tilde u_{02}, \tilde u_{03}) \|_{\amixedI}, \;\; L_2 \;= \;\| \mathcal H (\tilde{\mathbf F}_1, \tilde u_{02}, \tilde u_{03})\|_{\amixedI},\\
&L_3 \;= &\| \mathcal H (\tilde u_{01}, \tilde {\mathbf F}_2, \tilde u_{03})\|_{\amixedI},\;\;\; L_4 \;= \;\| \mathcal H (\tilde u_{01}, \tilde u_{02}, \tilde{\mathbf F}_3)\|_{\amixedI},\\
&L_5 \;= &\| \mathcal H (\tilde{\mathbf F}_1, \tilde{\mathbf F}_2, \tilde u_{03})\|_{\amixedI},\;\;\;\, L_6 \;= \;\| \mathcal H (\tilde u_{01}, \tilde{\mathbf F}_2, \tilde{\mathbf F}_3)\|_{\amixedI},\\
&L_7 \;= &\| \mathcal H (\tilde{\mathbf F}_1, \tilde u_{02},
\tilde{\mathbf F}_3)\|_{\amixedI},\;\; \;\,L_8 \;= \;\| \mathcal H (
\tilde{\mathbf F}_1, \tilde{\mathbf F}_2, \tilde{\mathbf
F}_3)\|_{\amixedI}.
\end{eqnarray*}
    The first term is easily handled by using \eqref{hart-keyy}.  We only consider $L_8$. The remaining cases are to be treated similarly. By Minkowski inequality we have
    \begin{align*}
      L_8 \lesssim & \left (\int_{J_T}  \left( \int_0^t\!\!\int_0^t\!\!\int_0^t \| \mathcal H  (\widetilde F_1(t_1), \widetilde F_2(t_2), \widetilde F_3(t_3))  \|_{L^{ {\tilde r}'}}\,dt_1dt_2dt_3\right)^{{\tilde q}'}  dt \right) ^{\frac{1}{ {\tilde q}'}}.
    \end{align*}
    This above is again bounded by
    \[ \int_{J_T\times J_T\times J_T}\|  \mathcal H  (\widetilde F_1(t_1), \widetilde F_2(t_2), \widetilde F_3(t_3))  \|_{\amixedI}\, dt_1 dt_2 dt_3. \]
    Applying \eqref{hart-keyy} again, we get
    $L_8  \lesssim C(N_1, N_2, N_3) \|F\|_{\mix{1}{2}(J_T\times \bbr^n)}^3$.
\end{proof}

 Let $\sigma(\xi)$ be infinitely differentiable so that for all
$\al \in N^{3n}$  and $\xi = (\xi_1,\xi_2,\xi_3) \in \bbr^{3n}$ there is a constant $c(\al)$ with
\begin{align}\label{sigma cond} |\pa_{\xi}^{\al} \sigma(\xi) | \le c(\al)(1+|\xi|)^{-|\al|}.\end{align}
Let us define a  multilinear operator $\Lambda$ by
\begin{align}\label{multi}
[\Lambda(f,g,h)](x) = \int
e^{ix\cdot\xi}\sigma(\xi_1,\xi_2,\xi_3)|\xi_2+\xi_3|^{-(n-2)}
\widehat{f}(\xi_1) \widehat{g}(\xi_2)\widehat{h}(\xi_3)\, d\xi_1
d\xi_2 d\xi_3,\end{align} where $\xi = \sum_{i = 1}^3 \xi_i$. We
notice that $\Lambda(f,g,h)=cf|\nabla|^{2-n}(gh)$ if $\sigma(\xi_1,\xi_2,\xi_3)=1$. Then we
have the following.

\begin{lem}\label{CMtri}
Let $\Lambda$ be defined as above and let
$u$ be a smooth solution of $iu_t + \Delta u = F$ on $J_T \times \bbr^n$ with the initial data $u_0$.  Then for  $(u_1, u_2, u_3)$ = $(P_{N_1}u, P_{N_2}u, P_{N_3}u)$ it holds that for any admissible pair $(\tilde q, \tilde r)$
  \[ \|\Lambda(u_1,u_2,u_3)\|_{\amixedI}\lesssim C(N_1, N_2,N_3)(\|u_0\|_{L^2}^3 + \|F\|_{L_t^1L_x^2(J_T\times \bbr^n)}^3),\]
where $C(N_1, N_2, N_3)$ is same as in Proposition \ref{har-key-prop}.
\end{lem}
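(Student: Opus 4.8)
The plan is to reduce Lemma~\ref{CMtri} to Lemma~\ref{4.2} by writing the symbol $\sigma$ as an absolutely convergent sum of tensor--product exponentials. The point is that inserting a factor $e^{i(k_1\cdot\xi_1+k_2\cdot\xi_2+k_3\cdot\xi_3)/(\lambda M)}$ into \eqref{multi} merely translates each frequency--localized factor $u_j=P_{N_j}u$ by $k_j/(\lambda M)$ in the $x$--variable, while Lemma~\ref{4.2} already bounds $\fd{2-n}(w_1(\cdot-\eta_1)w_2(\cdot-\eta_2))w_3(\cdot-\eta_3)$ by $C(N_1,N_2,N_3)(\|u_0\|_{L^2}^3+\|F\|_{L_t^1L_x^2(J_T\times\bbr^n)}^3)$ \emph{uniformly} over all translations $\eta_j$. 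Hence, once the symbol decomposition is in hand with constants independent of $N_1,N_2,N_3$, no interaction input beyond Lemma~\ref{4.2} is needed.

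Set $M=\max(N_1,N_2,N_3)$. Since $\widehat{u_j}$ is supported in $A(N_j)$ when $N_j>1$ and in $B(0,2)$ when $N_j=1$, the Fourier support of $\widehat{u_1}\otimes\widehat{u_2}\otimes\widehat{u_3}$ lies in $\{\,|(\xi_1,\xi_2,\xi_3)|\le 2\sqrt3\,M\,\}$, and when $M\gg1$ it additionally lies in $\{\,|(\xi_1,\xi_2,\xi_3)|\ge M/2\,\}$, because the index $j$ realizing $N_j=M$ satisfies $|\xi_j|\ge M/2$. First I would set $\sigma_M(\xi)=\sigma(\xi)\zeta(\xi/M)$: when $M\gg1$ I take $\zeta\in C_0^\infty(\bbr^{3n})$ equal to $1$ on $\{1/2\le|\xi|\le4\}$ and supported in $\{1/4\le|\xi|\le8\}$ (an annular cutoff, so $\sigma_M\equiv\sigma$ on the Fourier support while $|\xi|\sim M$ on $\supp\sigma_M$), whereas for $M\sim1$ I take $\zeta$ to be a fixed cube cutoff equal to $1$ near $B(0,2\sqrt3)$. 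In the first regime, \eqref{sigma cond}, the bound $(1+|\xi|)^{-|\al|}\lesssim M^{-|\al|}$ on $\supp\sigma_M$, and the Leibniz rule give $|\pa_\xi^\al\sigma_M(\xi)|\lesssim M^{-|\al|}$; hence the dilate $\sigma_M(M\,\cdot)$ is supported in a fixed compact set with $C^\infty$ norms bounded independently of $M,N_1,N_2,N_3$ (in the second regime this uniformity is trivial since the relevant part of $\sigma$ lies in a fixed bounded set). Expanding $\sigma_M(M\,\cdot)$ in a Fourier series on a fixed torus and undoing the dilation yields, on the Fourier support of interest,
\[\sigma(\xi_1,\xi_2,\xi_3)=\sum_{k=(k_1,k_2,k_3)\in\mathbb Z^{3n}} c_k\,e^{i(k_1\cdot\xi_1+k_2\cdot\xi_2+k_3\cdot\xi_3)/(\lambda M)},\qquad \sum_{k}|c_k|\le A,\]
with $\lambda$ a fixed constant and $A$ independent of $M,N_1,N_2,N_3$ (rapid decay of the Fourier coefficients of a uniformly smooth function).

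Substituting this into \eqref{multi} and carrying out the three frequency integrals, each factor $e^{ik_j\cdot\xi_j/(\lambda M)}$ turns $u_j$ into $u_j(\cdot+k_j/(\lambda M))$, so
\[\Lambda(u_1,u_2,u_3)=c'\sum_{k}c_k\,u_1\!\Big(\cdot+\tfrac{k_1}{\lambda M}\Big)\,\fd{2-n}\!\Big(u_2\!\big(\cdot+\tfrac{k_2}{\lambda M}\big)\,u_3\!\big(\cdot+\tfrac{k_3}{\lambda M}\big)\Big).\]
Each summand has exactly the form controlled by Lemma~\ref{4.2} applied to the triple $(P_{N_2}u,P_{N_3}u,P_{N_1}u)$ with the obvious translations $\eta_j$, so its $\amixedI$ norm is $\lesssim C(N_2,N_3,N_1)(\|u_0\|_{L^2}^3+\|F\|_{L_t^1L_x^2(J_T\times\bbr^n)}^3)=C(N_1,N_2,N_3)(\cdots)$ by the symmetry of $C$; summing over $k$ with $\sum_k|c_k|\le A$ gives the lemma. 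The main obstacle is precisely securing the \emph{uniformity} of $A$ in the Littlewood--Paley parameters: this is what the rescaling $\xi\mapsto M\xi$ is designed to do, and it works because on $\supp\sigma_M$ one has $|\xi|\sim M$, so that \eqref{sigma cond} delivers genuine $M^{-|\al|}$ decay there. The components with $N_j=1$ need no special handling at this stage — the ball--supported symbol of $\widetilde P_1$ is absorbed inside Lemma~\ref{4.2} via Corollary~\ref{key-cor} — and the only remaining bookkeeping is the harmless split into the regimes $M\gg1$ and $M\sim1$ for the choice of cutoff.
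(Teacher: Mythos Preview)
Your proposal is correct and follows essentially the same strategy as the paper: both expand the frequency--localized symbol as an absolutely convergent superposition of tensor--product exponentials, thereby reducing to translated copies of $f\cdot\fd{2-n}(gh)$ controlled uniformly by Lemma~\ref{4.2}. The only cosmetic difference is that the paper uses the continuous Fourier inversion of $\sigma$ multiplied by a product of individual cutoffs $\widetilde\psi(\xi_j/N_j)$ (one per variable, then bounds $\|\widehat{\widetilde\sigma_{1,2,3}}\|_{L^1}$), whereas you use a discrete Fourier series after a single joint annular cutoff at scale $M=\max N_j$; both choices yield uniform $\ell^1$/$L^1$ control on the coefficients for the same reason, namely \eqref{sigma cond}.
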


\begin{proof}
We choose another Littlewood-Paley projections $\widetilde P_{N_i}, i = 1, 2, 3,$ such that $\widetilde P_{N_i}P_{N_i} = P_{N_i}$ and the corresponding cut-off multiplier $\widetilde \psi(N_i^{-1}\xi)$ is supported in $A(N_i)$. Then the multilinear operator $\La$ can be rewritten as
\begin{align*}
&[\Lambda(u_1,u_2,u_3)](x) \\
&= \int_{\bbr^{3n}} e^{ix\cdot\xi}\widetilde
\sigma_{1,2,3}(\xi_1,\xi_2,\xi_3)|\xi_2+\xi_3|^{-(n-2)}
\widehat{u_3}(\xi_1)
\widehat{u_2}(\xi_2)\widehat{u_1}(\xi_3)\, d\xi_1 d\xi_2 d\xi_3\\
&= c\int_{\eta =
(\eta_1,\eta_2,\eta_3)}\widehat{\widetilde \sigma_{1,2,3}}(\eta)
[u_1(\cdot-\eta_1)|\nabla|^{2-n}(u_2(\cdot-\eta_2)u_3(\cdot-\eta_3))](x)\,d\eta,
\end{align*}
where $\widetilde \sigma_{1,2,3}(\xi_1,\xi_2,\xi_3) = \widetilde
\psi(N_1^{-1}\xi_1)\widetilde \psi(N_2^{-1}\xi_2)\widetilde
\psi(N_3^{-1}\xi_3)\sigma(\xi_1,\xi_2,\xi_3)$. By the condition
\eqref{sigma cond},  support condition of $\widetilde \psi$ and
routine integration by parts,  we readily get a uniform bound
$\|\widehat{\widetilde \sigma_{1, 2, 3}}\|_{L^1(\bbr^{3n})} \le
C(\alpha)$ with respect to $N_1, N_2, N_3$ for sufficiently large
$|\alpha|$. By Minkowski's inequality and Lemma \ref{4.2} we
get
\begin{align*}
&\|\Lambda(u_1,u_2,u_3)\|_{\amixedI}\\
 &\lesssim \|\widehat{\widetilde \sigma_{1, 2, 3}}\|_{L^1(\bbr^{3n})} \sup_{\eta \in \bbr^{3n}} \|u_1(\cdot-\eta_1)|\nabla|^{2-n}(u_2(\cdot-\eta_2)u_3(\cdot-\eta_3))\|_{\amixedI}\\
&\lesssim C(N_2, N_3, N_1)(\|u_0\|_{L^2}^3 + \|F\|_{L_t^1L_x^2(J_T\times \bbr^n)}^3).
\end{align*}
Since $C(N_2, N_3, N_1) = C(N_1, N_2, N_3)$, we obtain the desired.
\end{proof}

The operator $I\lnabla $ behaves like $N^{1-s}|\xi|^s$ for $|\xi|\gtrsim N$. A Littlewood-Paley theory shows that the Leibniz rule holds for $I\lnabla (fg)$. Thus taking $I\lnabla$ to the equation \eqref{hartree},  we have
    \begin{align}\begin{aligned}\label{12}
    \| I\lnabla &[V(u)u]\|_{L^1_tL_x^2(J_T \times \bbr^n)}\\
    &\lesssim \left(\| u\|^2_{\mix{3}{\frac{6n}{3n-4}}(J_T\times \bbr^n)}\|I\lnabla u\|_{\mix{3}{\frac{6n}{3n-4}}(J_T\times \bbr^n)}\right.\\
    &\qquad\quad\left. + \| u\|^2_{\mix{\frac 83}{\frac{4n}{2n-3}}(J_T\times \bbr^n)}\|I\lnabla u\|_{\mix{4}{\frac{2n}{n-1}}(J_T\times \bbr^n)}\right) \lesssim  Z_I(T)^3.
 \end{aligned}\end{align}

\begin{lem}\label{multi-est}
 Let $u$ solve $iu_t + \Delta u =  V(u)u$ with the initial data $u_0 \in C_0^\infty$.
  Then for   $(u_1, u_2, u_3)$ = $(P_{N_1}u, P_{N_2}u, P_{N_3}u)$ it holds that for any $0 < s < 1$, $T > 0$ and admissible pair $(\tilde q, \tilde r)$
  \begin{align*}
  \| \Lambda(I\lnabla u_1, I\lnabla u_2, I\lnabla u_3)\|_{\amixedI} \lesssim C(N_1, N_2,N_3) (Z_I(T)^3 + Z_I(T)^9),\end{align*}
  where $C(N_1, N_2, N_3)$ is same as in Proposition \ref{har-key-prop}.
\end{lem}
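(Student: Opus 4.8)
The plan is to deduce Lemma \ref{multi-est} from Lemma \ref{CMtri}, applied not to $u$ itself but to the function $v:=I\lnabla u$. The key observation is that $I$, $\lnabla$ and the Littlewood--Paley projections $P_{N_j}$ are all Fourier multipliers, hence commute with one another and with $\Delta$. Applying $I\lnabla$ to the equation $iu_t+\Delta u=V(u)u$ therefore shows that $v$ is a solution of
\[ iv_t+\Delta v=F,\qquad F:=I\lnabla[V(u)u], \]
on $J_T\times\bbr^n$ with initial data $v(0)=I\lnabla u_0$, and moreover $P_{N_j}v=I\lnabla P_{N_j}u=I\lnabla u_j$ for $j=1,2,3$. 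Since $u$ is a smooth solution (here the hypothesis $u_0\in C_0^\infty$ is used) and $\lnabla$ is of order one while $I$ is bounded with smooth symbol, $v$ is again a smooth solution, so Lemma \ref{CMtri} is applicable to $v$.

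Applying Lemma \ref{CMtri} to $v$ then gives
\[ \|\Lambda(I\lnabla u_1,I\lnabla u_2,I\lnabla u_3)\|_{\amixedI}=\|\Lambda(P_{N_1}v,P_{N_2}v,P_{N_3}v)\|_{\amixedI}\lesssim C(N_1,N_2,N_3)\big(\|I\lnabla u_0\|_{L^2}^3+\|F\|_{L_t^1L_x^2(J_T\times\bbr^n)}^3\big). \]
It then remains to bound the two terms on the right by powers of $Z_I(T)$. The forcing term is exactly the left-hand side of estimate \eqref{12}, which yields $\|F\|_{L_t^1L_x^2(J_T\times\bbr^n)}\lesssim Z_I(T)^3$, hence a contribution of order $Z_I(T)^9$. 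For the data term I would use that $(q,r)=(\infty,2)$ is an admissible pair for every $n\ge2$ (indeed $2/\infty=n(1/2-1/2)$), so that by the definition of $Z_I(T)$ and $u(0)=u_0$,
\[ \|I\lnabla u_0\|_{L^2}\le\|I\lnabla u\|_{L_t^\infty L_x^2(J_T\times\bbr^n)}\le Z_I(T), \]
giving a contribution of order $Z_I(T)^3$. Adding the two contributions completes the proof.

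I do not expect any genuine obstacle here: the argument is short bookkeeping once Lemma \ref{CMtri} and \eqref{12} are available. The only points requiring a little care are (i) verifying that $v=I\lnabla u$ really solves the claimed Schr\"odinger equation with forcing $I\lnabla[V(u)u]$ and retains enough smoothness to invoke Lemma \ref{CMtri} --- which is precisely why the smoothness of $u$ coming from $u_0\in C_0^\infty$ is invoked --- and (ii) controlling $\|I\lnabla u_0\|_{L^2}$ by the iteration norm $Z_I(T)$ rather than by a separate norm of $u_0$, which is exactly what the endpoint admissible pair $(\infty,2)$ accomplishes. With these in place the estimate follows immediately.
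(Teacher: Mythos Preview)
Your proof is correct and follows exactly the approach intended by the paper: apply Lemma \ref{CMtri} to $v=I\lnabla u$, which solves $iv_t+\Delta v=I\lnabla[V(u)u]$, and then control the data term by $Z_I(T)$ via the admissible pair $(\infty,2)$ and the forcing term by $Z_I(T)^3$ via \eqref{12}. The paper does not spell out a proof of Lemma \ref{multi-est}, but its placement of \eqref{12} immediately before the lemma makes clear that this is precisely the argument it has in mind.
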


Now we are ready to prove the propositions. We first give the proof of  Proposition \ref{ACL}.

\subsection{Proof of Proposition \ref{ACL}}

Differentiating the energy $E(Iu)(t)$ of $Iu$
 with respect to time, $
\frac{d}{dt}E(Iu)(t)  =
\,\text{Re} \int_{\bbr^n}\overline{\pa_t Iu}[V(Iu)Iu - I(V(u)u)] dx.
$
Thus we get
\begin{align*}
E(Iu(T))-E(Iu(0)) = \,\text{Re} \int_0^T \int_{\bbr^n}
\overline{\pa_t Iu}
\left[ V(Iu)Iu- I(V(u)u)\right]\ dx \, dt'.
\end{align*}
We apply the Parseval formula to the right hand side and use the equation \eqref{hartree}
to get
\begin{align}\label{E1}
|E(Iu(T))-E(Iu(0))| \lesssim E_a + E_b,
\end{align}
where
\begin{eqnarray}\label{Ea}
 E_{a} &\equiv& \bigg|
 \int_0^T\!\!\!\int_{\sum_{j=1}^4 \xi_j=0} \left(1-
\frac{m(\xi_2+\xi_3+\xi_4)}{m(\xi_2)m(\xi_3)m(\xi_4)}\right)\,  |\xi_2+\xi_3|^{-(n-2)}\\ \nonumber
& & \quad\qquad \times \ \widehat { \Delta \overline{Iu}}(\xi_1)\,
\widehat{\overline{Iu}}(\xi_2) \, \widehat{Iu}(\xi_3) \,
\widehat{Iu}(\xi_4)\ d\xi_1 \, d\xi_2 \, d\xi_3 \,
d\xi_4 \, dt'\, \bigg|,
\end{eqnarray}
\begin{eqnarray}\label{Eb}
E_{b} &\equiv& \bigg| \int_0^T\!\!\!\int_{\sum_{j=1}^4 \xi_j=0}
\left(1-
\frac{m(\xi_2+\xi_3+\xi_4)}{m(\xi_2)m(\xi_3)m(\xi_4) }\right)\,  |\xi_2+\xi_3|^{-(n-2)} \\ \nonumber
& & \quad \times \ \widehat{ (\overline{I(V(u)u}))}(\xi_1)\, \widehat{\overline{Iu}}(\xi_2) \,
\widehat{Iu}(\xi_3) \, \widehat{Iu}(\xi_4) \ d\xi_1 \, d\xi_2 \, d\xi_3
\, d\xi_4\, dt'\, \bigg|.
\end{eqnarray}
For both $E_a$ and $E_b$, we break $u$ into $u_i \equiv P_{N_i}u \;(i= 1,2,3,4)$ and exploit the interaction between Schr\"odinger waves of different frequency level using Proposition \ref{har-key-prop}.

For $E_a$ we show that for all $T> 0$ and $\ep > 0$
 \begin{equation}\label{Eannn}
 E_a \lesssim N^{-\frac 32 + \ep}(Z_I(T)^4 + Z_I(T)^{10}).
\end{equation}
It was shown  in \cite{ck} with $N^{-1+}$. The improvement  is actually due to the interaction gain of  $(N_j/N_k)^{\frac 12}$  in Lemma \ref{multi-est}. Let us set
\begin{equation}\label{B}
 B= B(N_2,N_3,N_4)\equiv \sup_{|\xi_2| \sim N_2, |\xi_3| \sim N_3, |\xi_4| \sim N_4}\left| 1- \frac{m(\xi_2+\xi_3+\xi_4)}{m(\xi_2)m(\xi_3)m(\xi_4)}\right| .
 \end{equation}
By  dyadic decomposition and factoring $B(N_2,N_3,N_4)$ out from the integral in $E_a$, we get
\begin{align*}
E_a
&\lesssim \sum_{N_1,N_2,N_3,N_4} B \left|\int_0^T\int_{\bbr^n} \mathcal{F}^{-1}[\Lambda(\Delta \overline{Iu_1}, \overline{Iu_2},Iu_3)](\xi_4)\mathcal{F}(Iu_4)(\xi_4)d\xi_4dt'\right| \nonumber\\
 &=\sum_{N_1,N_2,N_3,N_4} B \left|\int_0^T\int_{\bbr^n} [\Lambda(\Delta \overline{Iu_1}, \overline{Iu_2},Iu_3)](x)Iu_4(x)\,dx dt'\right|,
\end{align*}
where $\Lambda$ is the multiplier operator as defined by \eqref{multi} with the symbol
$$
\sigma(\xi_1, \xi_2, \xi_3) = \frac1{B(N_2,N_3,N_4)}\left(1 - \frac{m(\xi_2+\xi_3+\xi_4)}{m(\xi_2)m(\xi_3)m(\xi_4)}\right).
$$
Note that  $\sigma$ satisfies the condition \eqref{sigma cond}. Then for \eqref{Eannn} we need to show
\[\sum_{N_1,N_2,N_3,N_4} B\frac{N_1}{\langle N_2 \rangle \langle N_3 \rangle \langle N_4 \rangle} \mathcal E_a(\widetilde u_1, \widetilde u_2,\widetilde u_3,\widetilde u_4)\lesssim N^{-\frac 32 + \ep}(Z_I(T)^4 + Z_I(T)^{10}),\]
where $\widetilde u_1 = N_1^{-1}\Del\lnabla^{-1} u_1$ and $\widetilde u_j = N_j\lnabla^{-1}u_j, j = 2, 3, 4$, and
\[\mathcal E_a( w_1,  w_2, w_3, w_4)=\left|\int_0^t\int_{\bbr^n} [\Lambda(I \lnabla \overline{w_1}, I \lnabla \overline{w_2}, I \lnabla w_3)](x)I \lnabla w_4(x)\,dx dt'\right|.\]

Now note that $\mathcal E_a(\widetilde u_1, \widetilde u_2, \widetilde u_3, \widetilde u_4)=\mathcal E_a(\overline{\widetilde u_4}, \widetilde u_2, \widetilde u_3,\overline{\widetilde u_1}).
$
Hence by H\"older's inequality
\[\mathcal E_a(\widetilde u_1, \widetilde u_2, \widetilde u_3, \widetilde u_4)\le \| \Lambda(I\lnabla \widetilde  u_4, I\lnabla \widetilde u_2,I\lnabla \widetilde u_3)\|_{\amixedI} \|I \lnabla \overline{\widetilde u_1}\|_{\mix{\tilde q}{\tilde r}(J_T\times \bbr^n)}.\]
Taking admissible $(\tilde q, \tilde r)$ we now apply Corollary \ref{multi-est} and using  Sobolev imbedding (or Bernstein's inequality) and H\"ormander-Mikhlin theorem it follows that
\begin{equation}\label{e-a}
\mathcal E_a(\widetilde u_1, \widetilde u_2, \widetilde u_3, \widetilde u_4)\lesssim C(N_4,N_2,N_3)(Z_I(T)^4 + Z_I(T)^{10}).
\end{equation}
 For simplicity we also set
$$
a(N_1, N_2, N_3, N_4) \equiv \frac{B(N_2, N_3, N_4)N_1}{\langle N_2 \rangle \langle N_3 \rangle \langle N_4 \rangle} \times C(N_4,N_2,N_3). 
$$
Then for \eqref{Eannn} it is sufficient to show that for all $t \in J_T$ and $\ep > 0$
 \begin{equation}\label{Eann}
 \sum_{N_1,N_2,N_3,N_4\ge 1}  a(N_1,N_2,N_3,N_4) \lesssim N^{-\frac 32 + \ep}.
\end{equation}

\begin{proof}[Proof of \eqref{Eann}] Since $B$ and $C(N_2, N_3, N_4)$ are symmetric on the permutation of $N_2, N_3, N_4$, we may assume
\[N_2\geq N_3\geq N_4.\]  Then for the proof we  consider   the sums of  the three cases
$ N\gg N_2,\,\,  N_2 \gtrsim N \gg N_3 \geq N_4,\,\, N_3\gtrsim N$, separately.

\

\noindent\textbf{Case 1.} $N\gg N_2$. We have $m(\xi_i) =1$, $i=2,3,4,$ and $m(\xi_1) =1$ since $\sum_{i=1}^4 \xi_i =0$. So, the symbol $( 1- \frac{m(\xi_1)}{m(\xi_2)m(\xi_3)m(\xi_4)} ) = 0.$ Hence
\[ B(N_2, N_3, N_4)=0.\]

\noindent\textbf{Case 2.} $N_2 \gtrsim N \gg N_3 \geq N_4$. Since $\sum_{i=1}^4 \xi_i = 0$, we have $N_1 \sim N_2$. By the mean value theorem,
\begin{align*}
\Big|1- \frac{m(\xi_1)}{m(\xi_2)m(\xi_3)m(\xi_4)} \Big| &= \Big| \frac{m(\xi_2)-m(\xi_2+\xi_3+\xi_4)}{m(\xi_2)}  \Big|  \lesssim \frac{|\nabla m(\xi_2)\cdot (\xi_3+\xi_4)|}{m(\xi_2)} \lesssim \frac{N_3}{N_2}.
\end{align*}
Since $C(N_2, N_3, N_4) = (N_4/N_2)^\frac12$, we thus have \[a \lesssim \frac{1}{N_2N_4}\gain .\] Taking sum in the order of $N_2, N_3, N_4,$ we get
  \[ \sum_{N_1\sim N_2 \ge N\gg N_3\ge N_4} a \lesssim N^{-\frac 32}\ln N.\]

\noindent\textbf{Case 3.} $N_3\gtrsim N$. For this we need only to consider two subcases $N_1\sim N_2$ and $N_2 \gg N_1$ since the case $ N_1 \gg N_2$ cannot happen by the condition $\sum_i\xi_i = 0$.

\

\noindent\textbf{Case 3-1.} $N_3\gtrsim N$,  $N_1\sim N_2$. In this case, we have the bound
\[\Big|1- \frac{m(\xi_1)}{m(\xi_2)m(\xi_3)m(\xi_4)}  \Big|\frac{N_1}{N_2N_3N_4}\lesssim \frac{1}{N_3m(\xi_3)N_4m(\xi_4)}\]since $0 < m(\xi_i) \le 1$.
Then we consider  two possible cases $N_1\sim N_2 \ge N_3 \ge N_4 \gtrsim N$, $N_1\sim N_2 \ge N_3 \ge N \gg N_4$, separately. When $N_1\sim N_2 \ge N_3 \ge N_4 \gtrsim N$, we have
\[ a \lesssim \sup_{|\xi_3|\sim N_3, |\xi_4|\sim N_4} \frac{1}{N_3m(\xi_3)N_4m(\xi_4)}\gain
\lesssim \frac{1}{N^{2-2s}N_3^sN_4^s}\gain .\]
Hence, summation in the order of $N_2, N_3, N_4$ gives
\[ \sum_{N_1\sim N_2 \ge N_3 \ge N_4 \gtrsim N} a \lesssim N^{- 2}. \]
For the case $N_1\sim N_2 \ge N_3 \ge N \gg N_4$, it follows from the fact $m(\xi_4) = 1$ that
\[ a \lesssim \sup_{|\xi_3|\sim N_3, |\xi_4|\sim N_4} \frac1{N_3m(\xi_3)N_4} \gain  \lesssim \frac{1}{N^{1-s}N_3^sN_4}\gain .\]
Summing $N_2, N_3, N_4$, successively, we have acceptable bound
\[ \sum_{N_1\sim N_2 \ge N_3 \ge N_4 \gtrsim N} a \lesssim N^{-\frac  32}. \]

\noindent\textbf{Case 3-2.}  $N_3\gtrsim N$,  $N_2\gg N_1$. We have $N_2\sim N_3$ from $\sum_{i=1}^4 \xi_i =0$. Since $m(\xi_1) \ge m(\xi_2)$, we get
\begin{align}\label{bound}
\Big|1- \frac{m(\xi_2+\xi_3+\xi_4)}{m(\xi_2)m(\xi_3)m(\xi_4)} \Big|&\frac{N_1}{N_2N_3N_4}
\lesssim \frac{m(\xi_1)}{m(\xi_2)^2m(\xi_4)}\frac{N_1}{N_2^2N_4}.
\end{align}
We handle the cases $N_1\gtrsim N$ and $N_1 \le N$, separately.

If $N_1\gtrsim N$, we have three possible cases; $N_2\sim N_3\ge N_4 \ge N_1 \ge N $, $N_2\sim N_3\ge N_1 \ge N_4 \ge N  $,  $N_2\sim N_3\gg N_1 \gtrsim N \ge N_4 $.
When $N_2\sim N_3\ge N_4 \ge N_1 \ge N $,  using \eqref{bound} we get
\[ a \lesssim \frac{N_1^s}{N^{2-2s}N_2^{2s}N_4^s}\gain .\]
 Summing in the order of $N_2, N_4, N_1,$ we have
$\sum_{N_2\sim N_3\ge N_4 \ge N_1 \ge N} a \lesssim N^{-2}$, which is acceptable.
Similarly, when $N_2\sim N_3\ge N_1 \ge N_4 \ge N  $, it follows
\[ a \lesssim \frac{N_1^s}{N^{2-2s}N_2^{2s}N_4^s}\gain .\]
Then we get $ \sum_{N_2\sim N_3\ge N_1 \ge N_4 \ge N} a \lesssim N^{-2}$ by summing in $N_2, N_1, N_4$, successively.
Finally, when $N_2\sim N_3\gg N_1 \gtrsim N \ge N_4 $, we have
\[ a \lesssim \frac{N_1^s}{N^{1-s}N_2^{2s}N_4}\gain .\]
So summation  gives $\sum_{N_2\sim N_3\gg N_1 \gtrsim N \ge N_4 } a \lesssim N^{- \frac 32}.$

\smallskip

Now we turn to the case $N_1 \le N$. We again have three possible cases; $N_2\sim N_3\ge N_4 \ge N \ge N_1  $,  $N_2\sim N_3\ge N \ge N_4 \ge N_1  $,  $N_2\sim N_3\ge N \ge N_1 \ge N_4 $. Firstly, when $N_2\sim N_3\ge N_4 \ge N \ge N_1$, we have
\[  a \lesssim \frac{N_1}{N^{3-3s}N_2^{2s}N_4^s}\gain \]
which gives acceptable  bound $\sum_{N_2\sim N_3\ge N_4 \ge N \ge N_1} a \lesssim N^{-2} $.
For the case $N_2\sim N_3\ge N \ge N_4 \ge N_1  $ it follows that
\[ a \lesssim \frac{N_1}{N^{2-2s}N_2^{2s}N_4}\gain .\]
So, we have
$ \sum_{N_2\sim N_3\ge N \ge N_4 \ge N_1} a \lesssim N^{-2}.$
Finally when $N_2\sim N_3\ge N \ge N_1 \ge N_4   $,  we have
\[ a \lesssim \frac{N_1}{N^{2-2s}N_2^{2s}N_4}\gain .\]
Summation gives the bound $\sum_{N_2\sim N_3\ge N \ge N_1 \ge N_4 } a \lesssim N^{-\frac  32}$.
Thus we conclude the proof of \eqref{Eann}.
\end{proof}

Now we turn to   $E_b$ and claim
\begin{equation}\label{ebnn}
E_b\lesssim  N^{-\frac32 +\ep}(Z_I(T)^6 + Z_I(T)^{12}).
\end{equation}
Decomposing the integral in \eqref{Eb} dyadically,  factoring $B(N_2,N_3,N_4)$ out and using Plancherel's formula as before, we see that
\begin{align*}
E_b & \le
\sum_{N_1,N_2,N_3,N_4} B \left|\int_0^T\int_{\bbr^n} \mathcal{F}^{-1}[\Lambda( \overline{Iu_2},Iu_3,Iu_4)](\xi_1)\mathcal{F}(P_{N_1}I(V(u)\overline u))(\xi_1)\,d\xi_1dt'\right| \nonumber
\\
& = \sum_{N_1,N_2,N_3,N_4} B \left|\int_0^T\int_{\bbr^n} [\Lambda(I\overline{u_2},Iu_3,Iu_4)](x)P_{N_1}I(V(u)\overline{u})(x)\,dxdt'\right|.
\end{align*}
Hence, we see
\begin{equation}\label{ebb}
E_b \lesssim  \sum_{N_1,N_2,N_3,N_4}  \frac{B}{\langle N_2 \rangle \langle N_3 \rangle \langle N_4 \rangle}\times \mathcal E_b(u, \tilde u_2, \tilde u_3, \tilde u_4),
\end{equation}
where  $\mathcal E_b$ is defined by
\begin{align*}
\mathcal E_b(u, w_2, w_3, w_4)\equiv
\left|\int_0^T\int_{\bbr^n} [\Lambda(I \lnabla \overline{w_2},I \lnabla w_3,I \lnabla w_4)](x)P_{N_1}I(V(u)\overline{u})(x)\,dxdt'\right|.
\end{align*}
Here $\tilde u_j$ are defined by the same way as for $\mathcal E_a$.
We need  the following lemma to get a control of $\mathcal E_b$.

\begin{lem}\label{e-b} Let $u$ be a smooth solution of $iu_t = \Del u + V(u)u$ with initial data $u_0$ on $J_T \times \bbr^n$. Then there holds
\begin{align*}
\mathcal E_b \lesssim C(N_2,N_3,N_4)N_1(Z_I(T)^6 + Z_I(T)^{12}).
\end{align*}
\end{lem}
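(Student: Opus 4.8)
The plan is to estimate $\mathcal E_b$ by H\"older's inequality in space-time, pairing the trilinear expression $\Lambda(I\lnabla\overline{w_2},I\lnabla w_3,I\lnabla w_4)$ in a dual-admissible norm against $P_{N_1}I(V(u)\overline u)$ in the conjugate admissible norm, and then controlling the two factors separately by Lemma \ref{multi-est} and by (the argument behind) \eqref{12}. Concretely, fix any admissible pair $(\tilde q,\tilde r)$; then
\[
\mathcal E_b\le \|\Lambda(I\lnabla\overline{w_2},I\lnabla w_3,I\lnabla w_4)\|_{\amixedI}\,\|P_{N_1}I(V(u)\overline u)\|_{\Tx{\tilde q}{\tilde r}}.
\]

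For the first factor I would observe that the inputs $w_j=\tilde u_j=N_j\lnabla^{-1}P_{N_j}u$ differ from $P_{N_j}u$ only by the Fourier multiplier $N_j\lnabla^{-1}$, whose symbol on $A(N_j)$ is smooth and obeys the bounds \eqref{sigma cond}; absorbing these multipliers into the symbol $\sigma$ of $\Lambda$ does not affect the hypotheses of Lemma \ref{multi-est}, which therefore gives
\[
\|\Lambda(I\lnabla\overline{w_2},I\lnabla w_3,I\lnabla w_4)\|_{\amixedI}\lesssim C(N_2,N_3,N_4)\big(Z_I(T)^3+Z_I(T)^9\big).
\]

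For the second factor I would move a derivative off the projection: writing $P_{N_1}I=(P_{N_1}\lnabla^{-1})\cdot(I\lnabla)$ and using that $P_{N_1}\lnabla^{-1}$ is an $L^2$-multiplier supported in $A(N_1)$ of size $\sim N_1^{-1}$, Bernstein's inequality on $A(N_1)$ (which costs $N_1^{n(1/2-1/\tilde r)}=N_1^{2/\tilde q}$ in passing from $L^{\tilde r}_x$ to $L^2_x$ for an admissible pair) yields $\|P_{N_1}I(V(u)\overline u)\|_{\Tx{\tilde q}{\tilde r}}\lesssim N_1^{2/\tilde q-1}\|I\lnabla(V(u)u)\|_{\Tx{\tilde q}{2}}$. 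Since $2/\tilde q-1\le 0$ for every admissible pair, this step loses at most a factor $N_1$ — precisely what the summation leading to \eqref{ebnn} can afford. It then remains to bound $\|I\lnabla(V(u)u)\|_{\Tx{\tilde q}{2}}\lesssim Z_I(T)^3$, which I would get exactly as in the proof of \eqref{12}: apply the Leibniz rule for $I\lnabla$ to $V(u)u=c\fd{2-n}(|u|^2)u$, and then use H\"older's and the Hardy--Littlewood--Sobolev inequality together with the Strichartz bounds $\|u\|_{\mix qr},\ \|I\lnabla u\|_{\mix qr}\lesssim Z_I(T)$ for admissible $(q,r)$ — choosing the time integrability $\tilde q$, and, wherever a plain Strichartz H\"older is lossy, first splitting $u$ into Littlewood--Paley pieces and invoking the interaction estimate of Proposition \ref{har-key-prop} (through Lemma \ref{4.2}) to recover the missing decay. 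Multiplying the two factors then gives
\[
\mathcal E_b\lesssim C(N_2,N_3,N_4)\big(Z_I(T)^3+Z_I(T)^9\big)\cdot N_1Z_I(T)^3=C(N_2,N_3,N_4)N_1\big(Z_I(T)^6+Z_I(T)^{12}\big),
\]
which is the assertion.

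The hard part is the second factor. The estimate \eqref{12} supplies only an $L^1_tL^2_x$ bound on $I\lnabla(V(u)u)$, whereas pairing against the dual-admissible output of Lemma \ref{multi-est} forces an admissible time exponent $\tilde q\ge 2$ on $P_{N_1}I(V(u)\overline u)$, so one must rerun the H\"older--Strichartz bookkeeping of \eqref{12} at this higher time integrability — which is exactly where the interaction estimates of the present paper, rather than plain Strichartz, are needed; it is also essential to keep the $\lnabla$'s, since the derivative-free bound $\|I(V(u)u)\|_{\Tx{\tilde q}{\tilde r}}\lesssim Z_I(T)^3$ already fails on scaling grounds. One further point to watch is that the geometric factors ($B$, the $\langle N_j\rangle^{-1}$, and $C(\cdot)$) must assemble into a summable expression in \eqref{ebb}, which is why it is convenient to state the bound with the harmless factor $N_1$ rather than to push for the sharpest power.
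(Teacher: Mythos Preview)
Your overall strategy --- H\"older-split into two factors, bound the trilinear factor by Lemma \ref{multi-est}, and bound $\|P_{N_1}I(V(u)\overline u)\|_{\Tx{\tilde q}{\tilde r}}$ separately --- is exactly what the paper does, and your treatment of the first factor is correct.

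The gap is in the second factor. You reduce (via Bernstein to $L^2_x$ and the multiplier $P_{N_1}\lnabla^{-1}$) to the estimate
\[
\|I\lnabla(V(u)u)\|_{\Tx{\tilde q}{2}}\lesssim Z_I(T)^3
\]
for some admissible $\tilde q\ge 2$. This inequality is false by scaling: under $u\mapsto \lambda^{n/2}u(\lambda^2 t,\lambda x)$ the left side scales like $\lambda^{3-2/\tilde q}$ while the right side scales like $\lambda^3$, so the only scale-consistent choice is $\tilde q=\infty$, and at a fixed time one cannot close the H\"older since $Z_I$ controls $I\lnabla u$ only in $L^\infty_tL^2_x$, not in any $L^\infty_tL^r_x$ with $r>2$. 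Invoking Proposition \ref{har-key-prop} or Lemma \ref{4.2} does not help here: those estimates produce factors $C(N_1,N_2,N_3)\le 1$ and refine interactions between different frequency scales, but they cannot repair a scaling deficit.

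The paper's remedy is to Bernstein not to $L^2_x$ but to a \emph{lower} exponent $L^{\tilde r}_x$ (possibly $\tilde r<2$): with $(3q,r_2)$ admissible and $1/\tilde r=3/r_2-(n-2)/n$, Leibniz, Hardy--Littlewood--Sobolev and H\"older give
\[
\|I\lnabla(V(u)u)\|_{L^q_tL^{\tilde r}_x(J_T\times\bbr^n)}\lesssim \|I\lnabla u\|_{L^{3q}_tL^{r_2}_x(J_T\times\bbr^n)}^3\le Z_I(T)^3,
\]
and the Bernstein factor $N_1^{n/\tilde r-n/r-1}$ then equals exactly $N_1$ for admissible $(q,r)$. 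The admissibility and HLS constraints on $r_2$ force $2\le q\le 4$ when $n=3$ (and $2\le q\le\infty$ when $n\ge 4$), which is harmless since only one admissible pair is needed. Your sketch becomes correct once the intermediate space exponent is chosen this way rather than fixed at $2$.
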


\begin{proof} For any admissible pair $(q, r)$, the H\"older's inequality  yields
\[\mathcal E_b \le \|\Lambda(I \lnabla \overline{\tilde u_2},I \lnabla \tilde u_3,I \lnabla \tilde u_4)\|_{L^{q'}_tL^{ r'}_x(J_T\times \mathbb R^n)}
\|P_{N_1}I(V(u)\overline u)\|_{L^{q}_tL^{ r}_x(J_T\times \mathbb R^n)} .\]
Applying Lemma \ref{multi-est} and H\"ormander-Mikhlin theorem we have
 \begin{align*}
\mathcal E_b \lesssim  C(N_2,N_3,N_4)(Z_I(T)^3 + Z_I(T)^{9})
\|P_{N_1}I(V(u)\overline u)\|_{L^{ q}_tL^{ r}_x(J_T\times \mathbb R^n)}.
\end{align*}
Then Lemma \ref{e-b} is the consequence of the estimate
\begin{align}\label{02}
\|P_{N_1}I(V(u)\overline u)\|_{L^{ q}_tL^{ r}_x(J_T\times \mathbb R^n)}
\lesssim  N_1(Z_I(T))^3
\end{align}
for admissible $(q, r)$ with $2 \le q \le 4$ if $n = 3$ and $2 \le q \le \infty$ if $n \ge 4$.

In fact, using Bernstein's inequality and H\"ormander-Mikhlin
theorem, we see that for $r\ge \tilde r$
\begin{align}\label{ber}
\|P_{N_1}I(V(u)\overline u)\|_{L^{r}_x}
\lesssim N_1^{\frac n{\tilde r}-\frac nr} \|P_{N_1}I(V(u)\overline u)\|_{L^{\tilde r}_x}
\lesssim N_1^{\frac n{\tilde r}-\frac nr-1} \|I \lnabla (V(u)\overline u)\|_{L^{\tilde r}_x}.
\end{align}
From Leibniz rule for the operator $I \lnabla$ and H\"older's
inequality with ${1} /{\tilde r}=1/{ r_1} +  {1}/{r_2} $ we bound
$\|I \lnabla (V(u)\overline u)\|_{L^{\tilde r}_x}$ by
 \begin{align*}
\| |x|^{-2}\ast (I \lnabla|u|^2)\|_{L^{r_1}_x} \|u\|_{L^{r_2}_x}+\||x|^{-2}\ast |u|^2\|_{L^{r_1}_x}\|I \lnabla u\|_{L^{r_2}_x}.
 \end{align*}
It follows from the fractional integration that $\|I \lnabla
(V(u)u)\|_{L^{\tilde r}_x}\lesssim \|I \lnabla u\|_{L^{r_2}_x}^3$
for $1/r_1 = 2/r_2 - (n-2)/n$ and $\frac{n}{n-1} < r_2 <\frac{2n}{n-2}$. Since $\tilde
r \ge 1$, the equation $1/\tilde r = 3/r_2 - 1 + 2/n$ also implies
$r_2 \ge 3n/(2n-2)$. Combining this with \eqref{ber} we get
\begin{align*}
\|P_{N_1}I(V(u)u)\|_{L_t^qL^{r}_x(J_T\times \bbr^n)}
\lesssim N_1^{n(\frac3{r_2}-\frac1r-1+\frac 2n)-1} \|I \lnabla u\|_{L_t^{3q}L^{r_2}_x(J_T\times \bbr^n)}^3.
\end{align*}
If $(q, r)$ and $(3q, r_2)$ are admissible, then $n(\frac3{r_2}-\frac1r-1+\frac 2n)-1 = 1$. The admissibility and range of $r_2$ ensure that $2 \le q \le 4$ if $n = 3$ and $2 \le q \le \infty$ if $n \ge 4$. This proves \eqref{02}.
\end{proof}

 Using \eqref{ebb},  Lemma \ref{e-b} and the H\"ormander-Mikhlin theorem, we have
\[
 E_b \lesssim   \sum_{N_1,N_2,N_3,N_4} B\frac{\langle N_1\rangle}{\langle N_2 \rangle \langle N_3 \rangle \langle N_4 \rangle}C(N_2,N_3,N_4)(Z_I(T)^6 + Z_I(T)^{12}).
 \]
Then from \eqref{Eann} we see $ \sum_{N_1,N_2,N_3,N_4} B\frac{\langle N_1\rangle}{\langle N_2 \rangle \langle N_3 \rangle \langle N_4 \rangle}C\lesssim N^{-\frac32+\epsilon}$.  Therefore we get the desired \eqref{ebnn}. This completes the proof of Proposition \ref{ACL}.


\subsection{Proof of Proposition \ref{ebound}}
We recall that $\mathcal{N}_{bad} = I(V(u)u)- V(Iu)Iu.$
Then by H\"{o}lder's inequality  we get
\begin{align*}
  \text{Error} &=  \left|\int_0^T\!\!\iint_{\bbr^n\times\bbr^n}|Iu(x,t)|^2\frac{y-x}{|y-x|}\cdot \big(\mathcal{N}_{bad}\nabla \overline{Iu} - Iu\nabla\overline{\mathcal{N}_{bad}}\big)(y,t) dxdydt\right| \\
  & \leq \left(\int_0^T\!\!\int_{\bbr^n}( |\mathcal{N}_{bad}|\cdot|\nabla Iu| + |\nabla\mathcal{N}_{bad}|\cdot|Iu|)\,dydt\right)\|Iu\|^2_{L_t^\infty L^2_x(J_T\times \bbr^n)}  \\
  &\lesssim  \|\langle\nabla\rangle\mathcal{N}_{bad}\|_{\amixed(J_T\times \bbr^n)}\|\langle\nabla\rangle Iu\|_{\tx{\tilde q}{\tilde r}}\|Iu\|^2_{L_t^\infty L^2_x(J_T\times \bbr^n)} \\
  &\lesssim \|\langle\nabla\rangle\big[I(V(u)u)- V(Iu)Iu \big]\|_{\amixedI}(Z_I(T))^3.
\end{align*}
Hence the proof of Proposition \ref{ebound} is reduced to showing that
\[
\|\langle\nabla\rangle\big[I(V(u)u)- V(Iu)Iu \big]\|_{\amixedI} \lesssim N^{-\frac 32}(Z_I(T))^3.
\]
For any fixed $\psi \in \amixed(J_T\times \bbr^n)$ we set $$E_c = \left|\int_0^T\!\!\int_{\bbr^n} \lnabla \big[I(V(u)u)- V(Iu)Iu  \big]\overline \psi \, dx dt\right|.$$
Then by duality  it suffices to show that for $\epsilon>0$
\begin{equation}\label{I-comm-est}
E_c \lesssim N^{-\frac 32+\ep}(Z_I(T)^3 + Z_I(T)^9)\| \psi\|_{\tx{\tilde q}{\tilde r}(J_T\times \bbr^n)}.
\end{equation}

We now follow the similar lines argument as  in the proof of the Proposition \ref{ACL}.
By Plancherel's theorem we have
\begin{align*}
E_c \sim \left|\int_0^T\!\! \int_{\sum_{j=1}^4 \xi_i = 0} \widetilde\sigma(\xi_2, \xi_3, \xi_4) |\xi_2+\xi_3|^{-(n-2)}
\widehat{\overline{Iu}}(\xi_2)\widehat{Iu}(\xi_3)\widehat{Iu}(\xi_4)\widehat{\overline \psi}(\xi_1)d\xi dt\right|,
\end{align*}
where  \[\widetilde \sigma(\xi_2, \xi_3, \xi_4) = \langle\xi_2+\xi_3+\xi_4\rangle\left(1-
\frac{m(\xi_2+\xi_3+\xi_4)}{m(\xi_2)m(\xi_3)m(\xi_4)}\right).\]
We decompose $u_1, u_2,u_3$ and $\psi$ into the sum of dyadic pieces $u_j = P_{N_j}u (j = 1, 2, 3, 4)$ and $\psi_1=P_{N_1}\psi$.
Let us define the maximum of $|\widetilde \sigma|$ on each dyadic piece by
\begin{equation*}
 \widetilde B= \widetilde B(N_2,N_3,N_4)\equiv \sup_{|\xi_2| \sim N_2, |\xi_3| \sim N_3, |\xi_4| \sim N_4}|\widetilde \sigma(\xi_2, \xi_3, \xi_4)| .
 \end{equation*}
We now set $\sigma(\xi_2,\xi_3,\xi_4)= \widetilde B^{-1}\widetilde \sigma(\xi_2, \xi_3, \xi_4)$
and define the multilinear operator $\La$ to be as in \eqref{multi} with the symbol $\sigma$.
Then $$E_c \lesssim \sum_{N_1,N_2, N_3, N_4}\left|\int_0^T\!\!\int_{\bbr^n} [\La(\overline{Iu_2}, Iu_3, Iu_4)](x)\overline \psi_1(x)\,dxdt\right|. $$
Hence, as before we see
\begin{equation}\label{ebbbb}
E_c \lesssim  \sum_{N_1,N_2,N_3,N_4}  \frac{\widetilde B}{\langle N_2 \rangle \langle N_3 \rangle \langle N_4 \rangle}\times \mathcal E_c(u, \tilde u_2, \tilde u_3, \tilde u_4),
\end{equation}
where $\widetilde u_j = \langle N_j \rangle\langle \nabla \rangle^{-1} u_j, j = 2, 3, 4$ and $\mathcal E_c$ is defined by
\[\mathcal E_c( \psi_1, w_2, w_3, w_4)=\left|\int_0^T\!\!\int_{\bbr^n} [\Lambda(I \lnabla \overline{w_2}, I \lnabla w_3, I \lnabla w_4)](x)\overline \psi_1(x)\,dx dt'\right|.\]
Using H\"older's inequality  and  Lemma \ref{multi-est} as before, we  get $\mathcal E_c\lesssim C(N_2, N_3, N_4)( Z_I(T)^3 + Z_I(T)^9)\| \psi\|_{\tx{\widetilde q}{\widetilde r}(J_T\times \bbr^n)}.$
Then by this and \eqref{ebbbb} the proof of \eqref{I-comm-est} is reduced to showing that for $\epsilon>0$
\[\sum_{N_1,N_2,N_3,N_4}  \frac{\widetilde B (N_2, N_3, N_4)}{\langle N_2 \rangle \langle N_3 \rangle \langle N_4 \rangle} \times C(N_2, N_3, N_4) \lesssim N^{-\frac 32+\ep}.\]
Finally notice that $\widetilde B \sim BN_1$, where $B$ is the same upper bound appearing in the estimates of $E_a$ and $E_b$. Then we get the desired bound from \eqref{Eann}. This completes the proof of Proposition \ref{ebound}.

\section*{Appendix}

\subsection*{Wave packet decomposition}
For a fixed ${\lambda}\gg 1$, let us define the spatial and
frequency  grids  $\mathcal Y$, $\mathcal V$, by
\[\mathcal Y={\lambda}^{1/2}\mathbb Z^{n}, ~\mathcal
V={\lambda}^{-1/2}\mathbb Z^{n}\cap Q(2),\]
respectively.
For each $(y,v)\in \mathcal Y\times \mathcal V$,     we associate a  tube $T_{y,v}$  given by
\[T_{y,v}=\{(x,t)\in\mathbb R^n\times \mathbb R:
|t|\le 4\lambda, ~ |x-(y+ 2tv) |\le \lambda^{1/2}\}.\]
Obviously $T_{y,v}$ meets $(y,0)$ and  its major direction is
parallel to $(2v,1)$. Let us  denote by $\mathcal T(\lambda)$ the
collection of these cubes.  Conversely  for a given  $T=T_{y,v}\in \mathcal T(\lambda)$,
we set
\[ y_T=y, ~~ v_T=v.\]

Let  $\eta$ be the function
satisfying $\supp\widehat\eta\subset Q(2)$ and $\sum_{k\in
\mathbb Z^n}\eta(\cdot-k)=1$. Let $\psi\in
C_0^\infty(B(0,1))$ with $\sum_{k\in \mathbb Z^n}\psi(\cdot-k)=1$.
For $T\in \mathcal T(\lambda)$  we also set
\[ f_T(x)=\eta(\frac{x-y_T}{\lambda^{1/2}})\mathcal F^{-1}[\widehat{f} \psi({\lambda}^{1/2}(\cdot-v_T))].\]
Then it is obvious that
\[ \schr f=\sum_{T\in \mathcal T(\lambda)} \schr f_T
\]
provided $\widehat f$ is supported in $Q(1)$.  Then by routine integration
by parts one can see that $\schr f_T$ is essentially supported in
$T$. More precisely, for any $\delta>0$ there is a $C=C(M, \delta)$ such that
\[
\label{decay}
|\schr f(x)|\le C\lambda^{-M}\|f\|_{L^2}  \text{ if } (x,t)\not\in \lambda^\delta T.
\]
For the detail of the wave packet decomposition see \cite{t5} (also see \cite{l}).
For the proof of Proposition \ref{interaction}, we use the following estimates due to Tao \cite{t5}.

\begin{lem}[Relation $\sim$ between wave packets and $b$] \label{wavepacket}
Let $1\ll \lambda$, $0<\delta\ll 1$ and $\{b\}$ be the collection
of the cubes $b$ of side length $\sim \lambda^{1-\delta}$ partitioning
$Q(\lambda)\times(-\lambda,\lambda)$.   Suppose that  $f, g\in L^2$ with
$\widehat f,\widehat g$ supported in $ Q(3/2)$ and they are
decomposed  at scale $\lambda$ such that
\[f=\sum_{T\in\mathcal T(\lambda)} f_T, \quad g=\sum_{T\in\mathcal T(\lambda)} g_T.\]
Then if $dist(\supp \widehat f, \supp \widehat g)\sim 1$, then
there is a relation $\sim $ between  tubes $T\in \mathcal T(\lambda)$ and cubes $b\in \{b\}$
 such that for any $\epsilon>0$,
\begin{equation}
\label{l2sum} \sum_{b} \|\sum_{T\sim b} f_T\|_{L^2}^2\le
C\lambda^\epsilon\|f\|_{L^2}^2,
 ~~\sum_{b} \|\sum_{T\sim b} g_T\|_{L^2}^2\le C\lambda^\epsilon\|g\|_{L^2}^2,
 \end{equation}
and for any $b$ and $\epsilon>0$,
\begin{equation}
\label{notb} \|\sum_{T\not \sim b \text{ or } T'\not \sim b}
e^{it\Delta}f_T e^{it\Delta}g_{T'}\|_{L^2(b)}\le
C\lambda^\epsilon\lambda^{c\delta-(n-1)/4} \|f\|_{L^2}\|g\|_{L^2}.
\end{equation}
with $c$ independent of $\delta,\epsilon$.
\end{lem}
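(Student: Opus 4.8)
The plan is to recover Lemma \ref{wavepacket} by the combinatorial argument for wave packets of Tao \cite{t5} (see also \cite{lv}). First I would fix an absolute constant $C_0=C_0(n)$ and declare a tube $T\in\mathcal T(\lambda)$ and a cube $b$ to be related, $T\sim b$, exactly when the dilated tube $\lambda^{C_0\delta}T$ meets $b$, and $T\not\sim b$ otherwise. With this convention the two assertions reduce to a counting statement and a decay statement, respectively.

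For \eqref{l2sum} I would use that the frequency cutoffs $\psi(\lambda^{1/2}(\cdot-v_T))$ and the spatial cutoffs $\eta((\cdot-y_T)/\lambda^{1/2})$ have bounded overlap, so the wave packets are almost orthogonal and $\|\sum_{T\sim b}f_T\|_{L^2}^2\lesssim\sum_{T\sim b}\|f_T\|_{L^2}^2$. Summing in $b$ gives
\[
\sum_b\Big\|\sum_{T\sim b}f_T\Big\|_{L^2}^2\lesssim\sum_T\#\{b:T\sim b\}\,\|f_T\|_{L^2}^2,
\]
so it suffices to bound $\#\{b:T\sim b\}$. A tube has time extent $\sim\lambda$ and spatial radius $\lambda^{1/2}$, and its velocity obeys $|v_T|\lesssim1$; since the cubes $b$ group into $\sim\lambda^{\delta}$ time-slabs of thickness $\lambda^{1-\delta}$, and inside any such slab $\lambda^{C_0\delta}T$ lies in a spatial ball of radius $\lesssim\lambda^{1-\delta}$ (the drift $2|v_T|\lambda^{1-\delta}$ plus the enlarged radius $\lambda^{1/2+C_0\delta}$), it meets only $O(1)$ cubes per slab. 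Hence $\#\{b:T\sim b\}\lesssim\lambda^{\delta}$, and since $\sum_T\|f_T\|_{L^2}^2\lesssim\|f\|_{L^2}^2$ we get \eqref{l2sum} once $\delta<\epsilon$; the bound for $g$ is identical.

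For \eqref{notb} I would split the sum into the part with $T\not\sim b$ (and $T'$ arbitrary) and the part with $T'\not\sim b$, which are symmetric, and treat the first. If $T\not\sim b$ then every point of $b$ lies outside $\lambda^{C_0\delta}T$, so the off-tube decay of wave packets — polynomial in the distance to $\lambda^{C_0\delta}T$, hence summable in the tube location — gives $\|\schr f_T\|_{L^\infty(b)}\lesssim_M\lambda^{-M}\|f_T\|_{L^2}$ for every $M$. Using also $\|\schr g\|_{L^2(b)}\lesssim\lambda^{1/2}\|g\|_{L^2}$, which is just unitarity integrated over the time-interval of $b$, Hölder's inequality yields
\[
\Big\|\sum_{T\not\sim b}\schr f_T\,\schr g\Big\|_{L^2(b)}\lesssim_M\lambda^{-M}\|f\|_{L^2}\|g\|_{L^2},
\]
which for large $M$ is far stronger than the asserted $\lambda^{\epsilon}\lambda^{c\delta-(n-1)/4}\|f\|_{L^2}\|g\|_{L^2}$; adding the symmetric contribution finishes \eqref{notb}. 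The separation hypothesis $\dist(\supp\widehat f,\supp\widehat g)\sim1$ is not needed for this step, but it is exactly what powers the base case of the induction in Proposition \ref{mixed-bi} — ultimately the bilinear $L^2$ estimate \eqref{l22} at scale $\lambda^{-1/2}$ with its gain $\lambda^{-(n-1)/4}$ — so it is kept in the statement.

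The real difficulty is not any individual estimate but the joint calibration of $\sim$: it has to be loose enough that $T\not\sim b$ makes $\schr f_T$ negligible on $b$ (or, in coarser set-ups, that the off-diagonal pairs are absorbed into the transverse bilinear bound) and tight enough that each tube is related to only $\lambda^{O(\delta)}$ cubes, so that feeding the diagonal part $\sum_{T\sim b,\,T'\sim b}\schr f_T\,\schr g_{T'}$ into the induction-on-scale hypothesis of Proposition \ref{mixed-bi} on each $b$ and summing the resulting $\ell^2$ bookkeeping costs only a factor $\lambda^{\epsilon}$. Arranging this balance is the substance of \cite{t5}, which I would follow for the remaining routine points.
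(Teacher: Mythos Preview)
The paper gives no proof of this lemma; it is simply attributed to Tao \cite{t5}. Your argument is correct. With the purely geometric relation $T\sim b\Leftrightarrow\lambda^{C_0\delta}T\cap b\neq\emptyset$, almost orthogonality of the $f_T$ together with the count $\#\{b:T\sim b\}\lesssim\lambda^{\delta}$ yields \eqref{l2sum}, and rapid off-tube decay (summable over the lattice of tube locations) gives \eqref{notb} with the much stronger bound $O_M(\lambda^{-M})$.

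This is a genuinely simpler route than the one in \cite{t5}. There the relation $\sim$ is set up through a combinatorial pigeonholing so that tubes which \emph{do} pass through $b$ may nonetheless satisfy $T\not\sim b$; for those, rapid decay is unavailable, and the off-diagonal contribution is instead controlled by the transverse bilinear $L^2$ estimate at scale $\lambda^{-1/2}$ --- hence the specific exponent $-(n-1)/4$ in \eqref{notb} and the essential role of the hypothesis $\dist(\supp\widehat f,\supp\widehat g)\sim1$. Tao needs that finer relation because his induction is in $L^p$ near the critical exponent, where the diagonal bookkeeping demands more than the $\ell^2$ bound \eqref{l2sum}. For the lemma exactly as stated here, and for its application in Proposition~\ref{mixed-bi} (where the induction is in $L^1_tL^2_x$ and only \eqref{l2sum} is needed to sum the diagonal), your naive relation already achieves the required balance. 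Your closing paragraph therefore undersells what you have done: once the standard almost-orthogonality and decay facts are granted, no further ``substance of \cite{t5}'' is required.
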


\end{document}